\newcommand{\RR}{\mathbb{R}}
\newcommand{\CC}{\mathbb{C}}
\newcommand{\PP}{\mathbb{P}}
\newcommand{\Vol}{\mathrm{Vol}}
\definecolor{ao}{rgb}{0.0, 0.5, 0.0}
\definecolor{myred}{rgb}{0.81, 0.09, 0.13}
\newtheorem{theorem}{Theorem}[section]
\theoremstyle{definition}
\newtheorem{definition}[theorem]{Definition}
\newtheorem{example}[theorem]{Example}
\newtheorem{remark}[theorem]{Remark}
\theoremstyle{plain}
\newtheorem{lemma}[theorem]{Lemma}
\newtheorem{proposition}[theorem]{Proposition}
\newtheorem{corollary}[theorem]{Corollary}
\title{Vandermonde Cells Through the Lens of Positive Geometry}
\author{Fatemeh Mohammadi and Sebastian Seemann}
\date{}
\begin{document}

\maketitle

\begingroup
\renewcommand\thefootnote{}
\footnotetext{%
\noindent\textbf{2020 MSC:} 52B11, 14M25, 05E40, 14N10, 52B20.\\
\textbf{Keywords:} Vandermonde cells, positive geometries, canonical forms, boundary stratification, polyhedral subdivisions.}
\endgroup

\begin{abstract}
We study the geometric and algebraic structure of Vandermonde cells, defined as images of the standard probability simplex under the Vandermonde map given by consecutive power sum polynomials. Motivated by their combinatorial equivalence to cyclic polytopes, which are well-known examples of positive geometries and tree amplituhedra, we investigate whether Vandermonde cells admit the structure of positive geometries. 
We derive explicit parametrizations and algebraic equations for their boundary components, extending known results from the planar case to arbitrary dimensions. By introducing a mild generalization of the notion of positive geometry, allowing singularities within boundary interiors, we show that planar Vandermonde cells naturally fit into this extended framework.
Furthermore, we study Vandermonde cells in the setting of Brown-Dupont's mixed Hodge theory formulation of positive geometries, and show that they form a genus zero pair.

These results provide a new algebraic and geometric understanding of Vandermonde cells, establishing them as promising examples within the emerging theory of positive geometries.

\end{abstract}

{\hypersetup{linkcolor=black}
\setcounter{tocdepth}{1}
\setlength\cftbeforesecskip{1.1pt}
{\tableofcontents}}
\section{Introduction}

For integers $n,d\geq 1$, the Vandermonde cell $\Pi_{n,d}$ is defined as the image of the standard probability simplex
$\Delta_{n-1} := \{x \in \mathbb{R}^n_{\ge 0} : \|x\|_1 = 1\}$
under the Vandermonde map
\[
\nu_{n,d} : x \longmapsto (p_1(x), p_2(x), \ldots, p_d(x)),
\quad \text{where } p_k(x) = \sum_{i=1}^n x_i^k
\]
are the power-sum polynomials. 
This map has been studied extensively in connection with hyperbolic polynomials~\cite{arnol1986hyperbolic, givental1987moments, kostov1989geometric, kostov1999hyperbolicity, meguerditchian1992theorem}, as well as in relation to its restriction to the nonnegative orthant~\cite{1959_Ursell}. Both the fibers and images of $\nu_{n,d}$ have been analyzed using computational algebraic geometry~\cite{bik2021semi, Recoveryfrompowersums}. 
It was recently shown~\cite{WonderfulGeometry} that Vandermonde cells share the combinatorial type of cyclic polytopes.

Cyclic polytopes are simple examples of tree amplituhedra and positive geometries~\cite{arkani2017positive}, mathematical structures that have recently emerged as central objects in the study of scattering amplitudes in quantum field theory and cosmology. A positive geometry is a pair $(X, X_{\geq 0})$ consisting of a complex algebraic variety $X$ defined over $\mathbb{R}$ and a real semi-algebraic region $X_{\geq 0} \subset X_\mathbb{R}$, equipped with a distinguished meromorphic differential form, the \emph{canonical form}, whose logarithmic poles lie precisely along the boundary of $X_{\geq 0}$. The canonical form encodes the physical and combinatorial data associated with the geometry and its boundaries.

Examples of positive geometries include projective polytopes and polypols~\cite{kohn2025adjoints}. Beyond these, a wide range of objects such as toric varieties, cluster varieties, and Grassmannians can also be equipped with the structure of a positive geometry. More generally, pairs of varieties together with logarithmic volume forms have been investigated from a birational perspective in~\cite{BurnsideRings}.

\medskip
\noindent{\bf Motivation.}
The structural parallels between cyclic polytopes and Vandermonde cells naturally suggest that the latter may also possess a positive-geometry structure. This work investigates under which conditions Vandermonde cells define positive geometries in the sense of~\cite{arkani2017positive, brown2025positive}. 
In the planar case, we identify explicit subdivisions and determine their canonical forms, showing that they fit within a mild generalization of positive geometries that allows singularities within boundary interiors. 
In higher dimensions, we study their structure in the framework of~\cite{brown2025positive}, showing that they define a genus zero pair.

\medskip
\noindent{\bf Main results.}
Our main results analyze the geometric and algebraic structure of Vandermonde cells and connect them to the emerging theory of positive geometries.

\medskip
\noindent
\textbf{(1) Boundary parametrization and defining equations.}
Using a theorem of Ursell~\cite{1959_Ursell} (Theorem~\ref{UrsellTheorem}), we obtain an explicit parametric description of all boundary components of Vandermonde cells (Lemma~\ref{lem:Parametrisation for all d}). 
We further derive defining algebraic equations for these components, providing a complete characterization of their algebraic boundaries. 
In the planar case, these computations yield closed-form expressions (Section~\ref{Equations planar boundaries}), while in higher dimensions we implement symbolic computations in \textsc{Macaulay2} to study the structure and singularities of the boundaries (Section~\ref{Computations in Maccauley}).

\medskip
\noindent
\textbf{(2) Planar Vandermonde cells as positive geometries.}
By slightly relaxing the definition of positive geometries to allow singularities in the interiors of boundary components, we show that planar Vandermonde cells fall within this broader framework. We construct explicit canonical forms by subdividing planar Vandermonde cells into elementary regions, which provide fundamental examples of generalized positive geometries (Subsection~\ref{subsec:Planar Vandermonde cells}). 
Related extensions have been independently studied in~\cite{brown2025positive}.

\medskip
\noindent
\noindent
\textbf{(3) Canonical forms of planar Vandermonde cells.}
In Section~\ref{sec:positive_geometry}, we develop the explicit construction of canonical forms for planar Vandermonde cells within an \emph{extended definition} of positive geometries that allows controlled singularities inside boundary components.
We first identify the singularities and subdivision patterns of planar cells and then express their canonical forms as rational differential forms obtained by summing contributions from elementary subregions.
This extension preserves the essential axioms of positive geometries, particularly linearity and residue compatibility, while accommodating algebraic regions beyond the convex setting.

\medskip\noindent
\textbf{(4) Hodge-theoretic interpretation.}
We reinterpret Vandermonde cells through the mixed Hodge-theoretic formulation of positive geometries developed by Brown and Dúpont~\cite{brown2025positive}. 
Using the unirationality of their boundary hypersurfaces, we prove that all boundaries of Vandermonde cells have vanishing genus (Theorem~\ref{Thm:Vanishing geometric genus}), and consequently that the pair $(\PP^{d-1}, \Pi_{n,d})$ is a genus-zero pair (Corollary~\ref{cor:genus-zero}). 
This ensures the existence of canonical forms in the Brown–Dúpont sense and situates Vandermonde cells within a broader algebro-geometric framework of positive geometries (Section~\ref{Section:BrownDupont-setup}).

\medskip
\noindent
\textbf{(5) The limiting Vandermonde cell.}
We analyze the limiting object ${\rm lim}_{n\rightarrow \infty}\Pi_{n,d}$, obtained as the increasing limit of Vandermonde cells for fixed dimension~$d-1$. 
In the planar case, the boundary converges to the cuspidal cubic $\{y^2 = x^3\}$, while accumulating infinitely many singular points, which implies that the topological boundary of the limit $\Pi_d$ is not semi-algebraic. 
Consequently, no rational differential form can have poles exactly along $\partial\Pi_d$, so the limiting cell does not admit a classical rational canonical form. 
Instead, Section~\ref{sec:limitingcell} develops an alternative description based on \emph{dual volumes}: using polytope approximations, we identify classes of compact subsets of~$\mathbb{R}^d$ whose canonical forms can be expressed through the dual-volume representation of the canonical form of a polytope. 
This approach provides a geometric framework for extending canonical forms beyond the convex or rational setting.

\medskip
Taken together, these results establish Vandermonde cells as natural algebraic models within the theory of positive geometries. 
They provide a unified perspective linking explicit geometric parametrizations, canonical forms, and Hodge-theoretic structures.

\medskip
\noindent{\bf Outline of the paper.}
Section~\ref{sec:vandermonde-cells} reviews the definition and main properties of Vandermonde cells, following~\cite{WonderfulGeometry}. We extend the known boundary parametrizations from the planar case to arbitrary dimensions and enumerate the hypersurfaces contributing to the boundary. We also show that the irreducible boundary components of Vandermonde cells are unirational varieties, a fact that will play a central role in the study of the pair $(\PP^n, \mathrm{bd}\,\Pi_{n,d})$ and its genus. The section concludes with explicit equations for planar boundaries and practical instructions for computing non-planar boundaries using computer algebra software.
Section~\ref{sec:positive_geometry} reviews the theory of positive geometries, including a generalized framework that allows singularities within boundary interiors, as developed in~\cite{brown2025positive}. We present relevant examples and discuss canonical form computations, emphasizing the residue method in singular settings~\cite{weber2005residue}. We then apply this to planar Vandermonde cells, computing their canonical forms via subdivisions and analyzing their boundary structures.
Section~\ref{Section:BrownDupont-setup} introduces the Brown–Dúpont framework of mixed Hodge structures and establishes that Vandermonde cells define genus-zero pairs in this setting, thereby ensuring the existence of canonical forms and formal properties such as functoriality and multiplicativity.
Finally, Section~\ref{sec:limitingcell} studies the limiting Vandermonde cell ${\rm lim}_{n\to\infty}\Pi_{n,d}$ and its non-semi-algebraic boundary, proposing an alternative formulation of canonical forms based on dual volumes.

\medskip\noindent{\bf Acknowledgement.}
The authors are grateful to Sebastian Debus for insightful and helpful discussions. F.M. was partially supported by FWO grants (G023721N, G0F5921N), and the grant iBOF/23/064 from KU Leuven. S.S. is supported by the FWO PhD fellowship (11PEP24N), and the FWO grants G0F5921N (Odysseus) and G023721N.  

\section{Algebraic boundaries of Vandermonde cells}\label{sec:vandermonde-cells}

In this section we develop a detailed description of the boundary geometry of Vandermonde cells.  
We first establish explicit parametrizations of the boundary components and analyze their algebraic and geometric properties (Theorem~\ref{UrsellTheorem} and Lemma~\ref{lem:Parametrisation for all d}).  
We then prove that all boundary hypersurfaces are unirational (Proposition~\ref{prop:Unirational}) and illustrate how their defining equations can be computed symbolically using \texttt{Macaulay2} (Example~\ref{Computations in Maccauley}).  
These are the preliminary results needed for what follows.

\subsection{Definitions and general boundary description}

We begin by fixing notation and recalling basic definitions used throughout the paper.  
For a function $f$, we write $\{f=0\}$ for the set of all points $x$ satisfying $f(x)=0$.  
By a \emph{cusp}, we mean a singular point of a planar cubic curve that is not an ordinary node.  
Equivalently, a plane cubic curve $C$ has a cusp if its Weierstrass form $y^2=p(x)$ is defined by a polynomial $p(x)$ with a triple root; we refer to such curves as \emph{cuspidal}.  
By the \emph{topological boundary} of a semi-algebraic set in a projective variety, we mean its boundary with respect to the Euclidean topology.

\medskip

For $n,k\in\mathbb{N}$ with $n\geq k$, let
\[
e_k := \sum_{I\in I_k}\prod_{i\in I}x_i,
\qquad I_k := \{\, I\subseteq [n] : |I|=k \,\}
\]
denote the $k$-th elementary symmetric polynomial in $n$ variables, and let 
$p_a := \sum_{i=1}^n x_i^a$ be the $a$-th power-sum polynomial.
The \emph{Vandermonde map} in dimension~$d$ on~$n$ variables is defined by
\[
\nu_{n,d}\colon \mathbb{R}^n \longrightarrow \mathbb{R}^d,
\qquad
x \longmapsto (p_1(x),p_2(x),\ldots,p_d(x)).
\]
The corresponding \emph{$(n,d)$-Vandermonde cell} is
$\Pi_{n,d} := \nu_{n,d}(\Delta_{n-1})$,
where $\Delta_{n-1} := \{\,x\in\mathbb{R}^n_{\ge0} : x_1+\cdots+x_n=1\,\}$ is the standard probability simplex.  
Since the first coordinate of every image point equals~$1$, we regard $\Pi_{n,d}$ as a subset of~$\mathbb{R}^{d-1}$ via projection onto the last $d-1$ coordinates.  
By the Tarski–Seidenberg theorem, Vandermonde cells $\Pi_{n,d}$ are semi-algebraic sets.

\medskip

To describe these images more concretely, it is convenient to restrict to the nonnegative Weyl chamber
\[
W_n := \{\, x\in\mathbb{R}^n : 0\le x_1\le x_2\le\cdots\le x_n\,\},
\qquad
W_{n-1} := W_n\cap\Delta_{n-1}.
\]
By symmetry, one has $\Pi_{n,d} = \nu_{n,d}(W_{n-1})$.  
Fibers of this map, and of its various extensions, have been studied in~\cite{Recoveryfrompowersums, RegularsequencesofSymmetricpolynomials}.  
Distinct rational points lying in the same fiber correspond to solutions of the classical Prouhet–Tarry–Escott problem, linking the arithmetic structure of Vandermonde cells to deep Diophantine phenomena.

\medskip

The following classical result characterizes the boundary points of a Vandermonde cell; 
see~\cite{1959_Ursell} and~\cite[Theorem~2.4]{WonderfulGeometry}.

\begin{theorem}[\cite{1959_Ursell, WonderfulGeometry}]\label{UrsellTheorem}
The boundary $\,\mathrm{bd}\,\Pi_{n,d}$ of the Vandermonde cell is the image under the Vandermonde map $\nu_{n,d}$ of points of one of the following two types:
\begin{enumerate}
    \item 
      $(\underbrace{0,\dots,0}_{m_0},
      \underbrace{x_1}_{m_1},
      \underbrace{x_2,\dots,x_2}_{m_2},
      \dots,
      \underbrace{x_{d-1},\dots,x_{d-1}}_{m_{d-1}})$,
      where $m_{2k-1}=1$, $m_0\ge0$, and $m_{2k}\ge1$;
    \item 
      $(\underbrace{x_1,\dots,x_1}_{m_1},
      \underbrace{x_2}_{m_2},
      \dots,
      \underbrace{x_{d-1},\dots,x_{d-1}}_{m_{d-1}})$,
      where $m_{2k}=1$ and $m_{2k-1}\ge1$.
\end{enumerate}
Here each $m_i$ denotes the multiplicity of the corresponding value~$x_i$, that is, the number of consecutive repetitions of~$x_i$.  
Moreover, the preimages are unique up to permutation of coordinates.
\end{theorem}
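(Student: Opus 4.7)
The strategy is to reduce to the sorted Weyl chamber $W_{n-1}$, stratify it by multiplicity type, and identify via tangent-cone analysis which strata map to the topological boundary.

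By $S_n$-invariance of the power sums, $\Pi_{n,d} = \nu_{n,d}(W_{n-1})$ and each orbit in $\Delta_{n-1}$ has a unique sorted representative; it therefore suffices to decide which points of $W_{n-1}$ land in $\mathrm{bd}\,\Pi_{n,d}$. Stratify $W_{n-1}$ by multiplicity type $(m_0;\,m_1,\ldots,m_k)$ with $m_0$ zeros followed by $k$ distinct positive values $0 < y_1 < \cdots < y_k$ of multiplicities $m_1,\ldots,m_k \geq 1$; such a stratum has dimension $k-1$ after imposing $\sum_i m_i y_i = 1$. On it the Jacobian of $(y_1,\ldots,y_k) \mapsto (p_2,\ldots,p_d)$ has entries $j\,m_i y_i^{j-1}$, a weighted Vandermonde of rank $\min(k, d-1)$. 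When $k \geq d$ this Jacobian is already surjective and the stratum maps into the interior of $\Pi_{n,d}$; only strata with $k \leq d-1$ can contribute to the $(d-2)$-dimensional topological boundary, and the right-dimensional candidates are those with $k = d-1$.

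For such a stratum, deciding whether $y = \nu_{n,d}(x)$ lies on $\mathrm{bd}\,\Pi_{n,d}$ amounts to analyzing the tangent cone of $W_{n-1}$ at $x$. Beyond the $(d-2)$-dimensional linear stratum directions, this cone contains one-sided rays from the two degeneration modes of the chamber: a \emph{splitting} ray at each $y_i$ with $m_i \geq 2$ (breaking one repeated cluster into two sub-clusters) and a \emph{lifting} ray when $m_0 \geq 1$ (pushing a zero coordinate to a small positive value). A second-order Taylor expansion, performed while enforcing $p_1 = 1$, shows that the splitting at $y_i$ has vanishing first-order effect and leading second-order displacement in the direction $v(y_i) := (\binom{j}{2} y_i^{j-2})_{j=2}^d$; the lifting contributes an analogous positive ray involving $v(0) := (1, 0, \ldots, 0)$. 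Consequently $y \in \mathrm{bd}\,\Pi_{n,d}$ iff the cone generated by the stratum's image hyperplane $H$ together with these positive rays is contained in a closed half-space of $\mathbb{R}^{d-1}$.

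The main obstacle is translating this half-space condition into the combinatorial patterns (1) and (2). The normal $n$ of $H$ is characterised by $n \cdot u(y_i) = m_i$ for $u(y) := (j y^{j-1})_{j=2}^d$, so $\psi(y) := n \cdot u(y)$ is the unique polynomial of degree $d-1$ with $\psi(0) = 0$ and $\psi(y_i) = m_i$. The half-space condition then reads as a constant-sign requirement on $\phi(y) := n \cdot v(y) = \tfrac{1}{2}\psi'(y)$ at the positions with $m_i \geq 2$ (and at $y = 0$ when $m_0 \geq 1$). Since $\phi$ has degree $d-2$ and hence at most $d-2$ real roots, and since $\psi$ takes positive values $m_i$ at $d-1$ interpolation nodes, a careful sign-count argument forces the multiplicities to alternate between ``must be $1$'' and ``may be $\geq 1$''; the two possible starting signs of $\phi$ on $[0, y_1)$ yield exactly types (1) and (2), with $m_0 \geq 0$ allowed in type (1) and $m_0 = 0$ in type (2). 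Uniqueness of the sorted preimage follows from Prony's method applied to the Hankel matrix $(p_{a+b})$, which recovers $(y_i, m_i)$ from $(p_1, \ldots, p_d)$, with $m_0$ determined by $n = m_0 + \sum m_i$. The hardest step is the polynomial sign analysis that selects precisely the two allowed alternating patterns, since it must both certify each stated type as a boundary stratum and rule out every other multiplicity pattern.
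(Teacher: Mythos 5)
The paper does not prove this theorem; it cites Ursell and \cite{WonderfulGeometry} and sketches the latter's argument, which fixes the first $d-1$ power sums and locates the range of $p_d$ over the resulting fiber: a Lagrange-multiplier computation forces each critical point to have at most $d-1$ distinct coordinate values (via Descartes' rule of signs), and Morse theory identifies the two alternating multiplicity patterns as the extremizers, with uniqueness following from the isolation of nondegenerate critical points. Your plan is a genuinely different route through stratification of the Weyl chamber and tangent-cone analysis in the image space, and the polynomial $\psi$ you build (whose values at the nodes are the multiplicities) is essentially the Lagrange-multiplier polynomial of that argument, rediscovered from the image side.

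Two steps in the proposal do not hold as written. First, the claim that ``$y\in\mathrm{bd}\,\Pi_{n,d}$ iff the tangent cone at the image of one sorted preimage is contained in a half-space'' is only valid once you know that every preimage of $y$ lies in the same $S_n$-orbit; otherwise some other piece of $\Delta_{n-1}$ could map near $y$ from the opposite side. You postpone uniqueness to the very end, which makes the argument circular. Second, Prony's method cannot deliver that uniqueness: recovering $k$ atoms with their masses requires $2k$ moments $p_0,\dots,p_{2k-1}$, while a boundary stratum has $k=d-1$ and only $p_0=n,\ p_1=1,\ p_2,\dots,p_d$ (that is, $d+1$ moments) are available, which is insufficient once $d\ge 4$; the true uniqueness exploits the alternating multiplicity constraint, which your argument has not yet established at that stage. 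As a smaller issue, the second-order expansion of the ``lifting'' ray carries a compensating first-order stratum displacement (e.g.\ $y_1\mapsto y_1-\epsilon/m_1$ to preserve $p_1$), so its normal second-order contribution is $\phi(0)+\phi(y_1)/m_1$ rather than $\phi(0)$ alone; the sign analysis must track such cross-terms (and the second fundamental form of the stratum image) before the half-space condition can be reduced to the two stated patterns.
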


The above theorem is used in~\cite[Theorem~2.21]{WonderfulGeometry} to parametrize the boundaries of Vandermonde cells in the planar case.  
Since this result serves as one of our main computational tools, we briefly summarize the key ideas of its proof, following~\cite[Section~2]{WonderfulGeometry}, to keep the exposition self-contained.  
The argument proceeds by analyzing the critical loci of the last power-sum polynomial $p_{d-1}^\alpha$ restricted to the Vandermonde variety 
\[
V_k^{d}(c) := \nu_{n,d}^{-1}(c)\cap\mathbb{R}_{\ge0}^n, \qquad c \in \{1\}\times\mathbb{R}_{\ge 0}^k.
\]
Using Descartes' rule of signs, one shows that points with more than $d-1$ distinct nonzero coordinates are smooth.  
The critical points of $p_{d-1}^\alpha$ are then identified as those with exactly $d-1$ distinct coordinates, via a Lagrange-multiplier argument.  
By Morse-theoretic considerations, these critical configurations correspond precisely to the two multiplicity patterns in Theorem~\ref{UrsellTheorem}, which in turn parametrize the boundary components of the Vandermonde cell~$\Pi_{n,d}$.

\medskip

Theorem~\ref{UrsellTheorem} leads to explicit parametrizations for the boundary components of Vandermonde cells. 

\begin{lemma}\label{lem:Parametrisation for all d}
The boundary components of the Vandermonde cells $\Pi_{n,d}$ are parametrized by
\[
\left(
  \sum_{i=1}^{d-2} m_i x_i^j
  \;+\;
  \left( 
    \frac{1}{m_{d-1}} 
    - \sum_{i=1}^{d-2} \frac{m_i}{m_{d-1}} x_i
  \right)^{\! j}
\right)_{j=2,\dots,d},
\]
under the following conditions:
\begin{itemize}
    \item either $m_0 \ge 0$, $m_{2k-1}=1$, $m_{2k}\ge1$, \quad or \quad $m_0=0$, $m_{2k}=1$, $m_{2k-1}\ge1$;
    \item the variables $(x_1,\dots,x_{d-1})$ satisfy $0 \le x_i \le x_{i+1}$;
    \item the normalization condition $\sum_{i=1}^{d-1} m_i x_i = 1$ holds.
\end{itemize}
\end{lemma}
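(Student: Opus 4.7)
The plan is to apply the Vandermonde map $\nu_{n,d}$ directly to the two families of preimages described in Theorem~\ref{UrsellTheorem} and simplify the resulting coordinates using the simplex constraint. Since that theorem identifies $\mathrm{bd}\,\Pi_{n,d}$ precisely as the image of these configurations, the lemma reduces to a coordinate computation once one sees how the multiplicity data translates into power-sum evaluations.

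Concretely, given a preimage with distinct nonzero values $x_1\le x_2\le\cdots\le x_{d-1}$ of multiplicities $m_1,\ldots,m_{d-1}$ (and possibly $m_0\ge 0$ zero coordinates in pattern~(1)), the $j$-th power sum evaluates to
\[
p_j \;=\; m_0\cdot 0^j + \sum_{i=1}^{d-1} m_i\,x_i^j \;=\; \sum_{i=1}^{d-1} m_i\, x_i^j, \qquad j\ge 1,
\]
because the zero coordinates contribute nothing. The simplex condition $p_1=1$ becomes the affine relation $\sum_{i=1}^{d-1} m_i x_i = 1$, and since $m_{d-1}\ge 1$ we may solve for the largest distinct value
\[
x_{d-1} \;=\; \frac{1}{m_{d-1}}\left(1 - \sum_{i=1}^{d-2} m_i x_i\right).
\]
Substituting this expression into $p_j$ for $j=2,\ldots,d$ produces the rational parametrization asserted in the lemma, with $x_1,\ldots,x_{d-2}$ as the free parameters.

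The main bookkeeping step is to verify that this single formula covers both multiplicity patterns of Theorem~\ref{UrsellTheorem} uniformly. In either case $x_{d-1}$ is the largest distinct value and $m_{d-1}\ge 1$, so elimination of $x_{d-1}$ via the normalization is valid; the two types differ only in which of the parity constraints on the $m_i$ is imposed, a distinction built directly into the hypotheses of the lemma. The remaining inequalities $0\le x_i\le x_{i+1}$ are inherited from the restriction to the Weyl chamber $W_{n-1}$, and the resulting parameter space has the $d-2$ free coordinates needed to match the expected dimension of a boundary component of the $(d-1)$-dimensional cell $\Pi_{n,d}$. No deeper algebraic obstacle arises, since Theorem~\ref{UrsellTheorem} has already done the essential work of identifying the boundary preimages; what remains is a transparent change of variables and a careful tracking of the admissible parameter ranges.
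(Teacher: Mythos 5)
Your proposal is correct and follows essentially the same route as the paper: evaluate $\nu_{n,d}$ on the two Ursell preimage types, observe that zeros contribute nothing so $p_j = \sum_{i=1}^{d-1} m_i x_i^j$, and eliminate $x_{d-1}$ via the normalization $\sum m_i x_i = 1$. The paper's proof adds a brief closing remark on the geometric role of each of the three stated conditions, which you also touch on, so the two arguments agree in substance.
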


\begin{proof}
By Theorem~\ref{UrsellTheorem}, preimages of boundary components are of two types.  
The first type has the form
\[
x = (\underbrace{0,\dots,0}_{m_0}, 
      \underbrace{x_1}_{m_1}, 
      \underbrace{x_2,\dots,x_2}_{m_2}, 
      \dots, 
      \underbrace{x_{d-1},\dots,x_{d-1}}_{m_{d-1}}),
\]
while the second type is
\[
x = (\underbrace{x_1,\dots,x_1}_{m_1}, 
      \underbrace{x_2}_{m_2}, 
      \dots, 
      \underbrace{x_{d-1},\dots,x_{d-1}}_{m_{d-1}}).
\]
Evaluating the Vandermonde map on either type of preimage gives
$\textstyle{\nu_{n,d}(x)
  = \left( \sum_{i=1}^{d-1} m_i x_i^j \right)_{j=2,\dots,d}}$.
Using the constraint $\sum_{i=1}^{d-1} m_i x_i = 1$, we can eliminate $x_{d-1}$ as
\[
x_{d-1}
  = \frac{1}{m_{d-1}}
    - \sum_{i=1}^{d-2} \frac{m_i}{m_{d-1}}\,x_i,
\]
which yields the stated parametrization.

\medskip
Each of the three conditions in the statement of Lemma~\ref{lem:Parametrisation for all d} reflects a distinct geometric property:
\begin{itemize}
    \item The first condition ensures that the image of the parametrization corresponds precisely to boundary points, as characterized in Theorem~\ref{UrsellTheorem};
    \item The second guarantees injectivity of the parametrization when restricted to the Weyl chamber;
    \item The third encodes the fact that the domain of the Vandermonde map $\nu_{n,d}$ is the simplex~$\Delta_{n-1}$.
\end{itemize}
\end{proof}

\begin{remark}
Without loss of generality, one may assume that all multiplicities~$m_i$ with indices of fixed parity (depending on the parity of~$d$) are equal to~$1$.  
In the proof of Lemma~\ref{lem:Parametrisation for all d}, one can then solve for a variable~$x_i$ with multiplicity~$1$ to obtain a polynomial parametrization of the boundary components.
\end{remark}

With these general boundary parametrizations in hand, we turn to the global 
structure of $\mathrm{bd}\,\Pi_{n,d}$.

\subsection{Combinatorial and geometric properties}
We next study global geometric and combinatorial features of the boundaries of Vandermonde cells.

\medskip
The following result from~\cite[Theorem~3.1]{WonderfulGeometry} shows that each Vandermonde cell~$\Pi_{n,d}$ shares the \emph{combinatorial type} of the cyclic polytope~$C(n,d-1)$, defined as the convex hull of $n$ points on the $(d-1)$-dimensional real moment curve
\[
t \longmapsto (t, t^2, \dots, t^{d-1}).
\]
In the planar case, the topological boundary components of~$\Pi_{n,3}$ are concave, and hence the convex hull ${\rm conv}(\Pi_{n,3})$ forms a cyclic polytope.  
This convex hull can be interpreted as the \emph{amplituhedron} for $k=1$ with
\[
Z = \begin{pmatrix}
1 & \tfrac{1}{k} & \tfrac{1}{k^2}
\end{pmatrix}_{k=1,\dots,n},
\]
as introduced in~\cite{arkani2017positive}.  
Consequently, the limiting Vandermonde cell may be regarded as a specific \emph{limiting amplituhedron}, distinct from the asymptotic construction studied in~\cite{koefler2025taking}.

\begin{theorem}[\cite{WonderfulGeometry}, Theorem~3.1]\label{Thm:TypeofCyclicPolytope}
There exists a homeomorphism
\[
\Phi : \mathrm{bd}\,C(n,d-1) \;\longrightarrow\; \mathrm{bd}\,\Pi_{n,d},
\]
which restricts to a diffeomorphism on the interior of every face of~$C(n,d-1)$.
\end{theorem}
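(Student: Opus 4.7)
The plan is to establish the stated homeomorphism by constructing $\Phi$ face by face, using the combinatorial matching between faces of $C(n,d-1)$ (described by Gale's evenness criterion) and boundary strata of $\Pi_{n,d}$ (described by Theorem~\ref{UrsellTheorem}). Recall that by Gale's evenness criterion, the facets of $C(n,d-1)$ correspond to $(d-2)$-subsets $S\subseteq[n]$ such that $[n]\setminus S$ decomposes into blocks separated by elements of $S$, with prescribed parities depending on whether a block is internal or lies at an endpoint. My first step would be to verify that these parity constraints translate precisely into the multiplicity patterns $(m_0,m_1,\ldots,m_{d-1})$ of Theorem~\ref{UrsellTheorem}: the blocks with $m_{2k}\ge 1$ correspond to the even-length runs in the Gale complement, while the singleton multiplicities $m_{2k-1}=1$ (or $m_{2k}=1$ in the second regime) encode the selected indices of $S$ and the possible parity at the endpoint. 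This yields an explicit bijection between facets of $C(n,d-1)$ and top-dimensional boundary strata of $\Pi_{n,d}$, extending to all lower-dimensional faces by the corresponding face-containment order.

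Second, for a fixed facet $F$ of $C(n,d-1)$ and its partner stratum $\Sigma_F\subset \mathrm{bd}\,\Pi_{n,d}$, I would define $\Phi|_F$ as follows. Since $C(n,d-1)$ is simplicial, $F$ is a geometric simplex whose interior is parametrized by strictly positive barycentric coordinates on the chosen moment-curve points $\{(k,k^2,\ldots,k^{d-1})\}_{k\in S}$. The stratum $\Sigma_F$ is parametrized by Lemma~\ref{lem:Parametrisation for all d} by variables $(x_1,\ldots,x_{d-1})$ lying in the open region $\{0\le x_1<x_2<\cdots<x_{d-1}\}$ subject to $\sum_i m_i x_i=1$. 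I would then define $\Phi|_F$ as the composition of the inverse of the barycentric parametrization of $F$ with the Ursell-type parametrization of $\Sigma_F$, matching the combinatorial labels provided by the Gale--Ursell bijection.

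Third, I would verify that each $\Phi|_F$ is a diffeomorphism on the interior. Smoothness in both directions is immediate from the polynomial form of the parametrization in Lemma~\ref{lem:Parametrisation for all d}; injectivity follows from the uniqueness-up-to-permutation clause of Theorem~\ref{UrsellTheorem} together with the restriction to the Weyl chamber, which kills the symmetric-group ambiguity; and the Morse-theoretic argument recalled after Theorem~\ref{UrsellTheorem} guarantees that the Jacobian is nondegenerate on the interior. Finally, I would glue the per-face maps into a continuous global $\Phi$ by checking that when a facet $F$ degenerates to a sub-face $F'$ (two consecutive indices of $S$ collide, or a boundary block is absorbed), the corresponding multiplicity blocks in the Ursell pattern merge in exactly the manner needed for the limiting values of the two parametrizations to agree.

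The main obstacle is the bookkeeping in the Gale--Ursell translation: one must show that the natural partial order on faces of $C(n,d-1)$ corresponds, under the bijection, to the closure order on Ursell's multiplicity patterns, and this must be done coherently across both regimes in Theorem~\ref{UrsellTheorem} and for both parities of $d$. Once this order-preserving dictionary is in place, continuity of $\Phi$ at face intersections follows from the continuity of the polynomial parametrizations under coalescence of variables, and the diffeomorphism property on open faces follows from the rank computation already invoked above.
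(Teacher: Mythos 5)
The paper does not supply a proof of this theorem: it is imported verbatim from~\cite[Theorem~3.1]{WonderfulGeometry}, and the surrounding text only sketches the Morse-theoretic ideas behind the Ursell boundary description that feeds into it. So there is no ``paper's own proof'' to compare your attempt against; I can only assess the attempt on its own terms.

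Your high-level plan --- a dictionary between Gale's evenness criterion on $C(n,d-1)$ and Ursell's multiplicity patterns, then a face-by-face map built from Lemma~\ref{lem:Parametrisation for all d} --- is the natural route, and it is almost certainly what the source paper does. But as written there are two genuine gaps. First, the per-facet map is not actually defined: the barycentric domain of a facet $F$ and the Ursell domain $\{0\le x_1<\cdots<x_{d-1},\ \sum_i m_i x_i=1\}$ are both $(d-2)$-dimensional simplices, but ``composition of the inverse of the barycentric parametrization with the Ursell parametrization'' requires a specific affine identification of these two simplices (a choice of vertex matching), and this choice is exactly what must be made coherently across adjacent facets for the pieces to agree along ridges. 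You flag the gluing as ``bookkeeping,'' but the bookkeeping \emph{is} the proof; without pinning down the identification, the claim that ``continuity follows from continuity of the polynomial parametrizations under coalescence of variables'' is circular, since two neighbouring facets carry different multiplicity vectors and hence different parametrizations whose limits must be shown to match under the chosen identification.

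Second, your justification that each $\Phi|_F$ is a diffeomorphism leans on ``the Morse-theoretic argument recalled after Theorem~\ref{UrsellTheorem} guarantees that the Jacobian is nondegenerate on the interior.'' That argument identifies the critical locus of $p_{d-1}$ restricted to fibers of $\nu_{n,d-1}$; it does not say anything about the rank of the restricted Vandermonde map $\nu|_{\Delta_{n-1}^m}$ that parametrizes a stratum. What you actually need is an immersion statement for the parametrization in Lemma~\ref{lem:Parametrisation for all d}, which should instead be deduced from the non-vanishing of a generalized (confluent) Vandermonde determinant when the $x_i$ are distinct --- essentially the regular-sequence fact invoked in the proof of Proposition~\ref{prop:Unirational}. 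As it stands, the nondegeneracy step is asserted by appeal to the wrong lemma.
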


\medskip

In particular, by McMullen's Upper Bound Theorem, the first half of the $f$-vector of~$\Pi_{n,d}$ coincides with that of the cyclic polytope:
\[
f_i(C(n,d-1)) = \binom{n}{i},
\]
which is maximal among all $(d-1)$-dimensional polytopes with a fixed number of vertices.


\begin{remark}
    From the viewpoint of \emph{positive geometry}, cyclic polytopes are key examples of tree-level amplituhedra; see~\cite{arkani2017positive}.  
More general amplituhedra are closely related to cyclic polytopes and their stabbing chambers~\cite{seemann2025stab}; see also~\cite{lam2014totally} for an introduction to amplituhedra and their central role in the theory of positive geometries.  
Determining the canonical forms associated with these geometries remains a central open problem, with one of the key challenges being the analysis of the boundary hypersurfaces involved.
\end{remark}

Different boundary components of a Vandermonde cell~$\Pi_{n,d}$ may lie on the same algebraic hypersurface.  
In Lemma~\ref{NumberOfBoundaries}, we characterize precisely when this occurs and determine the number of distinct algebraic hypersurfaces that constitute the boundary of~$\Pi_{n,d}$.  
As an immediate consequence, we observe that the Zariski closures of these boundary hypersurfaces are, in general, \emph{non-normal} varieties.

\medskip

For positive integers $a$ and $b$, $p(a,b)$ denotes the number of partitions of~$a$ into~$b$ positive parts.

\begin{lemma}\label{NumberOfBoundaries}
The boundary of the Vandermonde cell $\Pi_{n,d}$ is defined by 
\[
p\!\left(
  n - \Big\lfloor \tfrac{d-1}{2} \Big\rfloor,\;
  \Big\lceil \tfrac{d-1}{2} \Big\rceil
\right)
\]
distinct equations, together with all boundary equations of $\Pi_{n-1,d}$.  
More concretely, for a given multiplicity vector $(m_0,\ldots,m_{d-1})$, all vectors
\[
\{(m_{\sigma(0)},\ldots,m_{\sigma(d-1)})\}_{\sigma\in S_{d-1}}
\]
satisfying the constraints of Theorem~\ref{UrsellTheorem} correspond to boundary components lying on the same algebraic hypersurface.  
In general, two multiplicity vectors $(m_0,\ldots,m_{d-1})$ and $(m'_0,\ldots,m'_{d-1})$ determine boundary components on the same hypersurface if and only if they represent the same partition of~$n$.
\end{lemma}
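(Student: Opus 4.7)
The plan is to prove the ``same partition $\iff$ same hypersurface'' equivalence first, then separately count the partitions with $m_0=0$ and reduce the case $m_0\ge 1$ to $\Pi_{n-1,d}$ by induction on~$n$.

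For the easier direction, I would argue directly from Lemma~\ref{lem:Parametrisation for all d}: the boundary component attached to $(m_0,\ldots,m_{d-1})$ is the image of
\[
(x_1,\ldots,x_{d-1})\longmapsto\left(\sum_{i=1}^{d-1} m_i x_i^{\,j}\right)_{j=2,\ldots,d},
\]
subject to $\sum_i m_i x_i = 1$. This parametrization is invariant under any simultaneous permutation of the pairs $(m_i,x_i)$, so permuting the $m_i$'s does not alter the Zariski closure of the image; the Euclidean ordering constraints $0\le x_1\le\cdots\le x_{d-1}$ are lost in that closure. Since $m_0 = n-\sum_{i\ge 1}m_i$ is forced by the normalization, the hypersurface depends only on the partition of $n$ determined by $(m_0,m_1,\ldots,m_{d-1})$.

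The main obstacle is the converse: distinct partitions must yield distinct hypersurfaces. My first approach would be to extract a discrete invariant of the hypersurface that separates partitions---for example, its degree, or the degree of the generically finite parametrization, whose fiber over a generic point of the image is controlled by the stabilizer of $(m_1,\ldots,m_{d-1})$ in $S_{d-1}$. If such an invariant proves elusive, a more direct route is implicitization: eliminating $x_1,\ldots,x_{d-1}$ yields a defining polynomial in $(p_2,\ldots,p_d)$ whose coefficients are symmetric functions in the $m_i$'s and hence detect the multiset. The explicit planar formulas of Section~\ref{Equations planar boundaries} serve as a sanity check and as the base case of a possible induction on~$d$.

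Granting the bijection, the count follows from combinatorics. Every Type~2 vector in Theorem~\ref{UrsellTheorem} has $m_0=0$, forces the $\lfloor(d-1)/2\rfloor$ even-indexed multiplicities $m_2,m_4,\ldots$ to equal~$1$, and leaves the $\lceil(d-1)/2\rceil$ odd-indexed multiplicities free and $\ge 1$; these are in bijection with partitions of $n-\lfloor(d-1)/2\rfloor$ into $\lceil(d-1)/2\rceil$ positive parts, producing $p(n-\lfloor(d-1)/2\rfloor,\lceil(d-1)/2\rceil)$ distinct hypersurfaces. Every Type~1 vector with $m_0=0$ determines a partition already realised by a Type~2 vector (one checks, by splitting off a single~$1$, that the two families coincide when $d$ is odd and that Type~1 is contained in Type~2 when $d$ is even), so no additional hypersurface appears. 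Finally, for components with $m_0\ge 1$, deleting the zero coordinates from the preimage gives a power-sum-preserving bijection onto boundary components of $\Pi_{n-1,d}$; induction on~$n$ identifies these with the full set of boundary equations of $\Pi_{n-1,d}$, completing the decomposition claimed in the statement.
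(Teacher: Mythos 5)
Your proof correctly handles the ``easy'' implication (same partition $\Rightarrow$ same hypersurface) via the permutation invariance of the parametrization from Lemma~\ref{lem:Parametrisation for all d}, and your reduction of the $m_0\ge1$ Type~1 components to boundaries of $\Pi_{n-1,d}$ (together with the parity check showing Type~1 with $m_0=0$ partitions are contained in Type~2 partitions) is correct and in fact more carefully spelled out than the published argument. The final count of Type~2 partitions is right.

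The genuine gap is exactly where you flag it: the converse (same hypersurface $\Rightarrow$ same partition). None of the candidate invariants you list would suffice. The \emph{degree} fails already in the planar case, where the boundary curves $b_k$ from Lemma~\ref{Equations planar boundaries} are all cubics for $k\ge3$. The \emph{degree of the generically finite parametrization} also fails in the planar case: the curves $b_k$ are rational with one cusp, so the parametrization has degree one for all $k\ge3$, yet the curves are distinct. The implicitization idea --- that the coefficients of the eliminant are symmetric functions of the $m_i$ ``and hence detect the multiset'' --- is not a proof, because symmetry of the coefficients in the $m_i$ does not by itself imply that the coefficient map is injective on multisets; asserting that the coefficients recover the multiset is precisely the claim you are trying to establish. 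The ingredient you are missing, and which the paper uses, is the uniqueness statement built into Theorem~\ref{UrsellTheorem}: preimages of boundary points are unique up to permutation of coordinates, so each boundary point determines a unique multiplicity partition, from which the paper deduces that hypersurfaces arising from distinct partitions must differ. You should invoke this, rather than searching for a purely birational or degree-theoretic invariant of the hypersurface.
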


\begin{proof}
By Theorem~\ref{UrsellTheorem}, every boundary point of $\Pi_{n,d}$ arises as the image under the Vandermonde map $\nu_{n,d}$ of a point of type~(1) or~(2).  
Since $\nu_{n,d}$ is symmetric, any $x=(x_1,\dots,x_n)\in\mathbb{R}^n$ has the same image as its permutation 
$x_{\sigma}=(x_{\sigma(1)},\dots,x_{\sigma(n)})$ for $\sigma\in S_n$.  
Hence, all polynomial relations satisfied by $\nu_{n,d}(x)$ are invariant under such permutations.

Moreover, the particular domain point representing a given multiplicity pattern does not affect the algebraic relations among its image coordinates.  
Therefore, two multiplicity vectors $(m_1,\dots,m_{d-1})$ and $(m'_1,\dots,m'_{d-1})$ define the same boundary hypersurface if and only if they encode the same partition of~$n$.  
This observation also implies that all boundary equations arising from type~(1) preimages correspond to boundaries of Vandermonde cells with smaller~$n$.

To determine the new boundary equations appearing as~$n$ increases, it suffices to count the distinct partitions arising from multiplicity vectors of type~(2).  
Such a vector $(m_1,\dots,m_{d-1})$ satisfies $m_k=1$ for every even~$k$, and therefore represents a partition $(m_1,\dots,m_{d-1})\vdash n$ containing exactly 
$\lfloor \tfrac{d-1}{2} \rfloor$ parts equal to~$1$.  
By removing the entries $m_i$ with even indices, we obtain a partition of
\[
n - \Big\lfloor \tfrac{d-1}{2} \Big\rfloor
\quad \text{into} \quad
\Big\lceil \tfrac{d-1}{2} \Big\rceil
\text{ parts.}
\]

Finally, since by Theorem~\ref{UrsellTheorem} the preimages of boundary points are unique up to permutation, none of the new boundary hypersurfaces obtained for larger~$n$ coincides with those from smaller~$n$.  
Hence the number of new boundary equations is precisely the number of such partitions, as claimed.
\end{proof}

We illustrate this in the case $d=4$, where the boundary hypersurfaces arise from type~(2) preimages.

\begin{example}
For $d=4$, the type~(2) preimages have the form
\[
(\underbrace{x_1,\dots,x_1}_{m_1},\,x_2,\,\underbrace{x_3,\dots,x_3}_{n-m_1-1})\quad \text{with}\quad 
1 \le m_1 \le n-2,
\quad
0 < x_1 < x_2 < x_3 < 1,
\]
and
$m_1 x_1 + x_2 + (n - m_1 - 1)x_3 = 1$.
Solving for $x_2$ gives
$x_2 = 1 - m_1 x_1 - (n - m_1 - 1)x_3$, 
which provides a parametrization of the image $\nu_{n,4}(x)$ as
\[
\bigl(
m_1 x_1^2 + (1 - m_1 x_1 - (n - m_1 - 1)x_3)^2 + (n - m_1 - 1)x_3^2,\;
m_1 x_1^3 + (1 - m_1 x_1 - (n - m_1 - 1)x_3)^3 + (n - m_1 - 1)x_3^3
\bigr).
\]
Gr\"obner basis computations based on this parametrization yield the explicit boundary equations.  
By Lemma~\ref{NumberOfBoundaries}, the hypersurface obtained for~$m_1$ coincides with that obtained for~$n - m_1 - 1$.

\medskip
Similarly, type~(1) preimages are of the form
$$(\underbrace{0,\dots,0}_{k},\,x_1,\,\underbrace{x_2,\dots,x_2}_{n-k-2},\,x_3)\quad \text{with}\quad 0<x_1<\tfrac{n-k}{n}, \quad x_1<x_2<x_3\quad \text{and}\quad x_3>\tfrac{n-k}{n}.$$
Their images under $\nu_{n,4}$ likewise parametrize boundary components of~$\Pi_{n,4}$.
\end{example}

\medskip

The boundary parametrizations derived above naturally raise a structural question about the algebraic nature of these hypersurfaces.  
In particular, we now show that the boundary components are not only algebraically defined but also exhibit strong birational properties.  
Recall that a projective variety~$X$ is  \emph{unirational} if there exists a dominant rational map 
$\Phi\colon \PP^n \dashrightarrow X$.  
We next show that all boundary hypersurfaces of Vandermonde cells are unirational, a fact that will play an important role in Section~\ref{Section:BrownDupont-setup}.

\begin{proposition}\label{prop:Unirational}
All boundary hypersurfaces of the Vandermonde cell~$\Pi_{n,d}$ are unirational.  
In particular, boundary curves and surfaces are rational.
\end{proposition}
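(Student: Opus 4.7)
The plan is to exploit the explicit polynomial parametrizations from Lemma~\ref{lem:Parametrisation for all d} and argue that each defines a dominant rational map onto the corresponding boundary hypersurface.

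Concretely, I would proceed as follows. For a fixed irreducible boundary hypersurface $X$ of $\Pi_{n,d}$, choose a multiplicity vector representing its partition class (cf.\ Lemma~\ref{NumberOfBoundaries}); normalizing so that the multiplicity of one distinguished variable equals $1$ (as allowed by the remark following Lemma~\ref{lem:Parametrisation for all d}) makes the parametrization polynomial. This yields a regular map
\[
\Phi\colon \mathbb{A}^{d-2} \longrightarrow \mathbb{A}^{d-1}
\]
in free parameters $x_1, \ldots, x_{d-2}$, whose image, restricted to a suitable open region of the Weyl chamber, traces out the real semi-algebraic boundary piece. Homogenizing the polynomial components extends $\Phi$ to a rational map $\widetilde{\Phi}\colon \PP^{d-2} \dashrightarrow \PP^{d-1}$, and the Zariski closure $Y := \overline{\Phi(\mathbb{A}^{d-2})}$ is irreducible since $\mathbb{A}^{d-2}$ is.

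Next, I would show that $\Phi$ is generically finite, so that $\dim Y = d-2$ and $Y$ coincides with $X$. Generic finiteness follows from Theorem~\ref{UrsellTheorem}: boundary points of $\Pi_{n,d}$ have preimages under $\nu_{n,d}$ that are unique up to coordinate permutation, and combined with the Weyl-chamber constraints $0 \le x_i \le x_{i+1}$ this forces distinct parameter vectors to produce distinct image points. Since the semi-algebraic boundary piece is Zariski dense in its irreducible algebraic closure $X$, we conclude $Y = X$, and $\widetilde{\Phi}\colon \PP^{d-2} \dashrightarrow X$ is a dominant rational map, proving unirationality. For the ``in particular'' claim, when $d = 3$ the boundary components are curves and L\"uroth's theorem yields rationality; when $d = 4$, one passes to a smooth projective model $\widetilde{X} \to X$ (still unirational) and applies Castelnuovo's rationality criterion in characteristic zero to obtain rationality of $\widetilde{X}$, hence of $X$.

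The main technical obstacle I expect is the bookkeeping of partition classes: by Lemma~\ref{NumberOfBoundaries}, several distinct multiplicity vectors can yield the same boundary hypersurface, so one must carefully select a single polynomial parametrization per irreducible component and verify that its Zariski closure is irreducible and of the correct dimension. Beyond this organizational step, the argument reduces to a clean application of classical unirationality and the two cited theorems.
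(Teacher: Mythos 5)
Your argument is correct, but the key step is proved by a genuinely different mechanism than the paper uses. Both proofs start from the parametrization of Lemma~\ref{lem:Parametrisation for all d} and view it as a rational map $\PP^{d-2} \dashrightarrow \PP^{d-1}$ whose image closure must be shown to be a hypersurface. The paper establishes this purely algebraically: it invokes the regular-sequence result of Conca, Krattenthaler and Watanabe to show that $d-1$ consecutive (positively weighted) power sums in $d-1$ variables are algebraically independent, so the map is dominant and hence generically finite. You instead argue from the real side: Ursell's uniqueness (Theorem~\ref{UrsellTheorem}) combined with the Weyl-chamber ordering makes the parametrization injective on a full-dimensional real region, which forces the image to have real dimension $d-2$, and therefore the Zariski closure $Y$ has complex dimension $d-2$ and equals the boundary hypersurface $X$. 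This is a valid and more elementary route that avoids the external reference to regular sequences, at the cost of passing through real semi-algebraic dimension counting; the paper's argument is more intrinsically algebraic and also yields algebraic independence of the weighted power sums, which may be of independent interest. One small caveat in your write-up: the sentence ``Generic finiteness follows from Theorem~\ref{UrsellTheorem}'' is slightly backwards — what Ursell plus the Weyl chamber give you directly is injectivity on a real open set, which implies $\dim Y = d-2$ and dominance; generic finiteness is then the standard consequence of a dominant rational map between varieties of equal dimension, as the paper also states. For the ``in particular'' clause, both you and the paper appeal to the classical resolution of the L\"uroth problem in dimensions one and two (you name Castelnuovo's criterion for surfaces explicitly, which is the underlying theorem), so this part is the same.
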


\begin{proof}
Let $m=(m_1,\dots,m_{d-1})$ be a multiplicity vector corresponding to a boundary component~$D$, and let 
$\Delta_{n-1}^m$ denote the subset of the simplex consisting of points with the given multiplicity vector.  
Consider the restriction of the Vandermonde map to this subset, 
\[
\nu_{\vert_{\Delta_{n-1}^m}}\colon \Delta_{n-1}^m \longrightarrow \mathbb{R}^{d-1},
\]
and extend it to the $(d-2)$-dimensional linear subspace
\[
V = \mathcal{V}\!\left(
  \{\,x_1=\dots=x_{m_1},\,
    \dots,\,
    x_{(\sum_{i=1}^{d-2}m_i)+1}=\dots=x_n,\,
    \textstyle\sum_{i=1}^{n}x_i=1
  \,\}
\right)
\subset \mathbb{R}^n,
\]
whose points share the same multiplicity pattern.
Denote by $\bar{\nu}_m$ the extension of $\nu_{\vert_{\Delta_{n-1}^m}}$ to complex coefficients, viewed as a rational map between projective spaces:
\[
\bar{\nu}_m\colon \PP^{d-2} \dashrightarrow \PP^{d-1}.
\]
The image of $\bar{\nu}_m$ is contained in an irreducible projective variety, and since the image of a polynomial map from a projective variety is again projective, the Zariski closure of this image is a hypersurface.  
Hence $\bar{\nu}_m$ is a dominant rational map.

By~\cite[Proposition~2.9]{Conca2009}, the sequence of $d-1$ consecutive weighted power-sum polynomials in $d-1$ variables forms a regular sequence.  
The argument there extends directly to the positively weighted case, implying that the components of the polynomial map~$\bar{\nu}_m$ also form a regular sequence.  
Such polynomial maps have also been studied in~\cite{Recoveryfrompowersums}.
A regular sequence of $d-1$ polynomials cuts out a complete intersection of codimension~$d-1$ in~$\PP^{d-1}$, whose dimension equals the transcendence degree of its function field.  
Consequently, no polynomial relation exists among the $d-1$ consecutive weighted power-sum polynomials, confirming that $\bar{\nu}_m$ is dominant.

Since $\bar{\nu}_m$ is a rational map between projective varieties of the same dimension, it follows that $\bar{\nu}_m$ is generically finite.  
Therefore, the image hypersurface corresponding to~$D$ is unirational.
(Determining which unirational varieties are actually rational is the classical \emph{Lüroth problem}, which is completely resolved in dimensions one and two: all unirational curves and surfaces are rational.)
\end{proof}

Having established the global structure and birational properties of the boundaries, we now pass to explicit equations and computations.

\subsection{Explicit equations and computational examples}

In the context of positive geometry, boundary equations are necessary in order to determine canonical forms explicitly. 
We next outline how boundary equations can be computed symbolically using the computer algebra system \texttt{Macaulay2}.  
The general procedure is to evaluate the Vandermonde map at preimages of a fixed multiplicity type, eliminate the auxiliary variables, and obtain the defining equation of the corresponding boundary hypersurface.  
A complete implementation of these computations, including routines for arbitrary~$n$ and~$d$, is available in the public GitHub repository:  
\url{https://github.com/SebSeemann/Boundaries-of-Vandermonde-cells}

\begin{example}\label{Computations in Maccauley}

For $d=4$, the Vandermonde map is
\[
\nu_{4,4}(x_1,x_2,x_3,x_4)
 = (p_2(x),p_3(x),p_4(x))
 = (x_1^2+\cdots+x_4^2,\;
    x_1^3+\cdots+x_4^3,\;
    x_1^4+\cdots+x_4^4),
\]
with image coordinates $y_1=p_2(x)$, $y_2=p_3(x)$, and $y_3=p_4(x)$.  
We compute the boundary equations of the first non-planar Vandermonde cell $\Pi_{4,4}$ using \texttt{Macaulay2}.

In this case, there are two valid partitions corresponding to boundary hypersurfaces.  
Computing the boundary for the partition associated with the type-(2) multiplicity vector $(2,1,1)$, we obtain a principal ideal in $y_1,y_2,y_3$.  
The generator of this ideal is the polynomial $P(y_1,y_2,y_3)$:
\begin{align*}
P(y_1,y_2,y_3) ={}&\,y_1^6 - 684y_1^5 + 1536y_1^4y_2 - 544y_1^3y_2^2 - 720y_1^4y_3 + 1209y_1^4 - 4168y_1^3y_2 \\
& + 4224y_1^2y_2^2 - 576y_1y_2^3 - 192y_2^4 + 3096y_1^3y_3 - 6336y_1^2y_2y_3 + 1152y_1y_2^2y_3  \\
&+ 2304y_1^2y_3^2 - 796y_1^3+ 3144y_1^2y_2 - 4512y_1y_2^2 + 2496y_2^3 - 2700y_1^2y_3  \\
&+ 8208y_1y_2y_3 - 7200y_2^2y_3 - 3744y_1y_3^2 + 6912y_2y_3^2 - 2304y_3^3 + 210y_1^2   \\
&- 648y_1y_2+ 544y_2^2 + 576y_1y_3 - 1008y_2y_3 + 468y_3^2 - 24y_1 + 40y_2 - 36y_3 + 1.
\end{align*}
For $n=3$ and the multiplicity vector $(1,1,1)$, one obtains the simpler relation
\[
Q(y_1,y_2,y_3) = 3y_1^2 - 6y_1 + 8y_2 - 6y_3 + 1 = 0.
\]
By Theorem~\ref{UrsellTheorem}, the boundary of $\Pi_{4,4}$ is cut out by the equations $P(y_1,y_2,y_3)=0$ and $Q(y_1,y_2,y_3)=0$, where the hypersurface $\{P=0\}$ consists of three boundary components of dimension~$2$, corresponding to the three multiplicity vectors defining the same partition.
\end{example}

Analogous computations can be carried out for $n=5$.  
There are two partitions of $4$ of size~$2$, namely $(3,1)\vdash 4$ and $(2,2)\vdash 4$, and the corresponding boundary equations can be obtained directly using the code provided in the GitHub repository. 

\begin{remark}
Elimination and substitution do not, in general, commute.  
Elimination is performed via the computation of a Gr\"obner basis followed by projection, and the resulting semi-algebraic set need not be Zariski closed, and so taking its closure may introduce extraneous components.  

For instance, substituting $n=4$ first and then eliminating $x_1,x_2$ yields a polynomial of degree~$8$ in $y_1,y_2,y_3$, whereas substituting $m=1$ afterwards leaves the degree unchanged.  
By contrast, substituting $n=4$ and $m=1$ before elimination produces a degree~$6$ polynomial, which explicit computations confirm as the correct boundary equation.  
Similarly, substituting $m=1$ first and then eliminating $x_1,x_2$ again yields a degree~$6$ polynomial, valid uniformly for all~$n$.
\end{remark}

\subsection{Planar Vandermonde cells}\label{subsec:Planar Vandermonde cells}

We now specialize to the planar case $d=3$, where resultants give closed-form boundary equations.

\begin{lemma}\label{Equations planar boundaries}
The boundary of $\Pi_{n,3}$ consists of segments of the curves $b_2,\dots,b_n$, where
\[
b_k =
x^3(k^3 - 4k^2 + 4k)
+ y^2(-k^3 + k^2)
+ xy(6k^2 - 6k)
+ x^2(-3k^2 + 3k - 3)
+ 3kx
+ y(-4k + 4)
- 1.
\]
Here $b_2$ is a line, while for $k>2$ each $b_k$ is a cuspidal cubic.
\end{lemma}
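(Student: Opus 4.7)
The plan is to identify each $b_k$ with the algebraic hypersurface arising from boundary preimages having exactly $k$ nonzero coordinates, derive a polynomial parametrization via Lemma~\ref{lem:Parametrisation for all d}, eliminate the parameter, and then analyze the singularities of the resulting curve.

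First, specializing Theorem~\ref{UrsellTheorem} to $d=3$, the boundary preimages of $\Pi_{n,3}$ are (up to permutation) of the form $(\underbrace{0,\dots,0}_{n-k},\underbrace{u,\dots,u}_{k-1}, 1-(k-1)u)$ with $u\in[0,1/k]$, or the Type~(1) variant $(\underbrace{0,\dots,0}_{n-k}, 1-(k-1)u,\underbrace{u,\dots,u}_{k-1})$ with $u\in[1/k,1/(k-1)]$, where $k\in\{2,\dots,n\}$ is the number of nonzero entries. Lemma~\ref{NumberOfBoundaries} ensures that both configurations for a fixed $k$ lie on the same algebraic hypersurface, which I denote by $b_k$, and that there are exactly $n-1$ such hypersurfaces. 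In both cases the Vandermonde map yields the common polynomial parametrization
\[
x(u)=k(k-1)u^2-2(k-1)u+1,\qquad y(u)=-k(k-1)(k-2)u^3+3(k-1)^2u^2-3(k-1)u+1.
\]

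Next, I would eliminate $u$ from the system $(x-x(u),\,y-y(u))$. Solving $x=x(u)$ for $u^2$ and substituting into $y=y(u)$ makes the expression linear in $u$, producing the rational formula $u=[(k+1)x-ky-1]/[(k-2)(kx-1)]$ for $k\neq 2$. Substituting back into $x=x(u)$ and clearing denominators yields a polynomial relation of degree $3$ in $x$ and degree $2$ in $y$, which after expansion coincides, up to a nonzero scalar, with the claimed $b_k$. Equivalently, one can verify $b_k(x(u),y(u))\equiv 0$ by checking that the seven coefficients of the resulting polynomial of degree $6$ in $u$ all vanish identically as functions of $k$. This algebraic bookkeeping is mechanical but forms the main obstacle of the proof.

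Finally, the curve type splits according to the $x^3$-coefficient $k^3-4k^2+4k=k(k-2)^2$. For $k=2$ this coefficient vanishes, and direct expansion gives $b_2=-(3x-2y-1)^2$, whose zero locus is the line $3x-2y-1=0$; this is also forced by the elimination, since at $k=2$ the factor $(k-2)$ reduces the only constraint to $(k+1)x-ky-1=3x-2y-1=0$. For $k>2$, the parametrization has a critical point at $u_*=1/k$ with image $(1/k,1/k^2)$, since $x'(1/k)=2(k-1)(k\cdot 1/k -1)=0$ and a short computation gives $y'(1/k)=0$. Setting $v=u-1/k$, $X=x-1/k$, $Y=y-1/k^2$, the polynomial identities $X=k(k-1)v^2$ and $Y-3X/k=-k(k-1)(k-2)v^3$ hold exactly; squaring and eliminating $v$ yields $(k-1)(kY-3X)^2=k(k-2)^2 X^3$, the standard Weierstrass cusp equation (after a linear rescaling). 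This confirms that $b_k$ is an irreducible cuspidal cubic with cusp at $(1/k,1/k^2)$ for every $k>2$.
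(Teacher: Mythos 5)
Your proposal follows the same overall structure as the paper: obtain the polynomial parametrization from Lemma~\ref{lem:Parametrisation for all d}, eliminate the parameter, and then analyze the singularity. Both arrive at the same implicit equations $b_k$, and your parametrization $x(u)=k(k-1)u^2-2(k-1)u+1$, $y(u)=-k(k-1)(k-2)u^3+3(k-1)^2u^2-3(k-1)u+1$ is correct, as is the rational inverse $u=[(k+1)x-ky-1]/[(k-2)(kx-1)]$ and the factorization $b_2=-(3x-2y-1)^2$.

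Where the two routes genuinely diverge is in technique. The paper performs the elimination by computing a resultant: it writes down the $5\times 5$ presenting matrix for $\Phi_{P_x(t)-xQ(t),\,P_y(t)-yQ(t)}$ and takes its determinant, a method that automatically yields (up to a factor) the defining equation of the image without requiring one to guess the form in advance. You instead eliminate by direct algebra — solving for $u$ as a rational function of $(x,y)$ and substituting back, or verifying $b_k(x(u),y(u))\equiv 0$ coefficient-by-coefficient. Both work; the resultant is more systematic and self-certifying, while your substitution makes the birationality of the parametrization manifest (which you then exploit for the cusp). For the cusp itself, the paper argues softly: by Lüroth the curves are rational, a direct computation shows the singular locus is the single point $(1/k,1/k^2)$, and since this point has a unique preimage under the parametrization it cannot be a node, hence is a cusp. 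You instead give an explicit local Weierstrass normal form: setting $v=u-1/k$, $X=x-1/k$, $Y=y-1/k^2$, you derive the exact relations $X=k(k-1)v^2$ and $Y-3X/k=-k(k-1)(k-2)v^3$, and eliminate $v$ to obtain $(k-1)(kY-3X)^2=k(k-2)^2X^3$. This is more concrete and directly exhibits the cuspidal structure, at the cost of a bit more computation. One minor caveat in your version: to conclude that $(1/k,1/k^2)$ is the \emph{only} singularity of $\{b_k=0\}$ (as the lemma asserts by calling $b_k$ a cuspidal cubic), one still needs the observation that the parametrization is birational and $x'(u)=2(k-1)(ku-1)$ vanishes only at $u=1/k$; you have the ingredients for this, but it is worth making explicit, whereas the paper states the uniqueness of the singular point as a direct computation.
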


\begin{proof}
From the parametrization in Lemma~\ref{lem:Parametrisation for all d}, there is exactly one new boundary curve when $n$ increases.  
To obtain its defining equation, we use the \emph{resultant} of the corresponding rational parametrization.

Given a rational parametrization
\[
t \longmapsto \bigl(x(t)=P_x(t)/Q(t),\,y(t)=P_y(t)/Q(t)\bigr),
\]
a defining equation of the image is given by the resultant
\[
p(x,y) = \mathcal{R}\bigl(P_x(t) - xQ(t),\, P_y(t) - yQ(t)\bigr),
\]
where $\mathcal{R}(A,B)$ denotes the resultant of $A$ and $B$, equivalently $\det(\Phi_{A,B})$ for the linear map $\Phi_{A,B}(C,D) = AC + BD$, with $\deg(C) = \deg(B)$ and $\deg(D) = \deg(A)$; see~\cite[§7.1.2, p.~40]{arkani2017positive}.

In our case,
\[
Q(t) = (n-1)^2, \quad
P_x(t) = (1-t)^2(n-1) + t^2(n-1)^2, \quad
P_y(t) = (1-t)^3 + t^3(n-1)^3.
\]
A presenting matrix for $\Phi_{P_x(t)-xQ(t),\,P_y(t)-yQ(t)}$ in the standard monomial basis is
\[
\begin{pmatrix}
\frac{(n-1)^2-1}{(n-1)^2} & 0 & \tfrac{n}{n-1} & 0 & 0 \\
\tfrac{3}{(n-1)^2} & \tfrac{(n-1)^2-1}{(n-1)^2} & \tfrac{-2}{n-1} & \tfrac{n}{n-1} & 0 \\
\tfrac{-3}{(n-1)^2} & \tfrac{3}{(n-1)^2} & \tfrac{1}{n-1}-x & \tfrac{-2}{n-1} & \tfrac{n}{n-1} \\
\tfrac{1}{(n-1)^2}-y & \tfrac{-3}{(n-1)^2} & 0 & \tfrac{1}{n-1}-x & \tfrac{-2}{n-1} \\
0 & \tfrac{1}{(n-1)^2}-y & 0 & 0 & \tfrac{1}{n-1}-x
\end{pmatrix}.
\]
Taking its determinant gives
\[
\frac{1}{(n-1)^4}\Bigl(
n\bigl(n^3(x^3-y^2)+n^2(-4x^3-3x^2+6xy+y^2)
+n(4x^3+3x^2+3x-6xy-4y)-3x^2+4y-1\bigr)\Bigr),
\]
which simplifies to
\[
\frac{-1}{(n-1)^4}\Bigl(
x^3(n^4-4n^3+4n^2)
+y^2(-n^4+n^3)
+xy(6n^3-6n^2)
+x^2(-3n^3+3n^2-3n)
+3n^2x
+y(-4n^2+4n)
-n
\Bigr).
\]
Normalizing so that the constant coefficient is~$1$ yields the claimed family of equations~$b_k$.

For $k=2$ we observe that $b_2$ factors as the square of a linear form, hence defines a line.  
By L\"uroth’s theorem, all unirational curves are rational, so $\{b_k=0\}$ is a rational curve for all $k \ge 2$.  
A direct computation shows that the singular locus of $b_k$ consists of the single point $(1/k,\,1/k^2)$, which corresponds to a unique preimage under the parametrization; hence the singularity is a cusp.
\end{proof}

\section{Positive geometry and canonical forms}\label{sec:positive_geometry}
\noindent
Building on the boundary description developed in Section~\ref{sec:vandermonde-cells}, we now reinterpret Vandermonde cells as instances of positive geometries and analyze their associated canonical differential forms. 
In this section we extend the framework of positive geometries to include semi-algebraic regions whose interiors may contain singularities, showing that planar Vandermonde cells naturally fall within this broader setting (Definition~\ref{DefPositiveGeometry}). 
Using the additivity property of canonical forms (Proposition~\ref{AdditivityofCanForms}), we compute explicit canonical forms for planar Vandermonde cells by subdividing them into elementary regions bounded by lines and cuspidal cubics (Propositions~\ref{Prop:SemiAlgTypes} and~\ref{SubdivisionOfPlanarCells}). 
Explicit canonical forms for the resulting configurations are derived in Lemmas~\ref{lem:typeI}--\ref{lem:typeIII}, and their logarithmic nature is verified in Lemma~\ref{lem:typeI}. 
Finally, we confirm that these forms satisfy residue recursion and hence define  positive geometries (Theorem~\ref{Thm:PlanarCellsarePositiveGeometries}). 

\smallskip
We begin with general preliminaries before turning to explicit computations in subsequent subsections.

\subsection{Preliminaries: logarithmic forms, residues, and functoriality}
\label{subsec:log-forms}
This subsection recalls the analytic and geometric tools required for constructing canonical forms of Vandermonde cells. 
We review logarithmic differential forms, residues, and functoriality, following~\cite{arkani2017positive}. 

\medskip\noindent\textbf{Affine and projective setup.}
We regard $\RR^n$ as the affine chart $\{x_{n+1}\neq 0\}$ inside projective space 
$\PP^n_{\RR} := \{(x_1:\dots:x_{n+1}) \mid (x_1,\dots,x_{n+1})\neq 0\}$ 
and denote the coordinates by $x, y, \ldots$. 
Throughout, we move freely between affine and projective viewpoints.

\medskip\noindent\textbf{Pullbacks and pushforwards.}
Let $X$ be an irreducible complex variety of dimension $n$, and let $Y\subset X$ be a codimension-one subvariety such that $X\setminus Y$ is smooth. 
In our applications, the ambient varieties $X$ may be singular. 
For a holomorphic map $f$, the pullback $f^*(\omega)$ of a differential form is defined by precomposition. 
If $f$ is a proper and surjective morphism between varieties of the same dimension, we define the pushforward (or trace) $f_*(\omega)$ as the sum of pullbacks along the finitely many local inverses of $f$~\cite[II~(b)]{Griffiths1976Variations}. 
In particular, resolutions of singularities are proper and surjective, so pushforwards along them are well defined.

\medskip\noindent\textbf{Meromorphic and holomorphic forms.}
By a rational or meromorphic differential form $\omega$ on $X$ we mean a rational form defined on the smooth locus $X_{\mathrm{reg}}\subset X$. 
We say that $\omega$ is \emph{holomorphic} if it pulls back to a holomorphic form $\rho^{*}(\omega)$ on a smooth model $\tilde{X}$ under a resolution of singularities $\rho\colon \tilde{X} \to X$. 

\medskip\noindent\textbf{Extension over singularities.}
Determining when the pullback of a holomorphic form on $X_{\mathrm{reg}}$ extends across the exceptional locus of a resolution is a subtle question. 
A classical theorem of Kempf asserts that if $X$ has rational singularities, then every form obtained as the pullback of a holomorphic form on $X_{\mathrm{reg}}$ extends holomorphically over the exceptional locus; see~\cite{ArapuraHolomorphicForms} for a comprehensive discussion of holomorphic forms on singular varieties. 

\medskip\noindent\textbf{Top forms and the cotangent sheaf.}
We denote by $\Omega_X^n$ the top exterior power of the cotangent sheaf $\Omega_X$ of $X$, which is locally free if and only if $X$ is smooth. 
A global section of $\Omega_X^n$ restricts to a holomorphic differential form on the smooth locus $X_{\mathrm{reg}}$, and its pullback along a resolution $\rho$ defines a global section of the line bundle $\Omega^n_{\tilde{X}}$. 
In particular, any non-zero holomorphic $n$-form on $X$ pulls back to a non-zero section of $\Omega_{\tilde{X}}^n$, and conversely, any non-zero section of $\Omega_{\tilde{X}}^n$ pushes forward to a holomorphic form on $X$. 
This correspondence allows us to pass between singular and resolved models without losing information about holomorphic top forms.

\medskip\noindent\textbf{Logarithmic differential forms.}
We now recall the notion of a logarithmic form (see {\cite[§7]{deligne1970equations}, \cite[§8]{weber2005residue}}). 
Let $X$ be an irreducible variety and $Y \subset X$ a hypersurface.  
A \emph{logarithmic} $n$-form $\omega$ on $X$ with poles along $Y$ is a rational differential form on $X$ satisfying the following conditions:
\begin{itemize}
    \item If $X$ is smooth, then $\omega$ has at most simple poles along $Y$; that is, for any local equation $f$ of $Y$, the form $f\omega$ extends holomorphically across $Y$.  
    \item If $X$ is singular, choose an embedded resolution $\rho\colon\tilde{X}\to X$ such that $\rho^{-1}(Y)$ is a simple normal crossings divisor.~Then $\omega$ has at most simple poles along $Y$ if $\rho^*(\omega)$ has at most simple poles along~$\rho^{-1}(Y)$.  
    \item If $Y$ is a simple normal crossings divisor locally given by $\{f_1\cdots f_r=0\}$, then $\omega$ is logarithmic if it can be written locally as
      $\omega = \eta \wedge \frac{df_1}{f_1}\wedge \dots \wedge \frac{df_r}{f_r}$,
    where $\eta$ is holomorphic.  
    \item For general $Y$, we call $\omega$ logarithmic if $\rho^*(\omega)$ is logarithmic for some resolution $\rho$ as above.
\end{itemize}

\medskip\noindent\textbf{Spaces of logarithmic forms.}
We denote by $\Omega^n_{\log}(X\setminus Y)$ the space of logarithmic $n$-forms on $X$ with poles along $Y$.  
For a simple normal crossings divisor $Y=\{f_1\cdots f_r=0\}$, every form of the type 
$\eta \wedge \frac{df_1}{f_1}\wedge \dots \wedge \frac{df_r}{f_r}$,
with $\eta$ holomorphic, belongs to $\Omega^n_{\log}(X\setminus Y)$.  
For example, if $X=\PP^n$ and $Y=\{g=0\}$ is a hypersurface of degree $d>n$, then the forms with simple poles along $Y$ are precisely those of the type
$\frac{f}{g}\, dx$, where $f$ is homogeneous of degree $d-n-1$. 
If $Y$ is a simple normal crossings divisor, every such form has logarithmic poles along $Y$ and thus belongs to $\Omega^n_{\log}(X\setminus Y)$. 
Finally, if $Y$ is not a simple normal crossings divisor, there exist differential forms on $X\setminus Y$ with simple poles along $Y$ that are not logarithmic; see Example~\ref{Example:non-log Pole}.

\medskip\noindent\textbf{Poincaré residue.}
Given a logarithmic form $\omega$ on $X$ with poles along a hypersurface $Y$, one can associate to each irreducible component of $Y$ a lower-dimensional differential form called its \emph{residue}. 
This construction, originating with Poincaré and formalized in modern treatments such as~\cite{weber2005residue}, plays a central role in the recursive structure of canonical forms.
More precisely, let 
$\omega \in \Omega_{\log}(X\setminus Y)$ be a logarithmic differential form.  
For a prime divisor $D \subset Y$, the \emph{residue} of $\omega$ along $D$ is locally defined by
$\operatorname{Res}_D(\omega) := \eta$,
where $\omega = \tfrac{df}{f}\wedge \eta + \eta'$ and $f$ is a local equation of $D$.

\noindent
Residues can be computed on an embedded resolution of singularities 
$\rho\colon (\tilde{X},\rho^{-1}(Y))\to(X,Y)$, where $\rho^{-1}(Y)$ is a simple normal crossings divisor, 
and then pushed forward to $(X,Y)$. 
In particular, the residue along a singular hypersurface $Y$ is well defined as a differential form on its smooth locus $Y^{\mathrm{reg}}$.

\medskip

The following result from~\cite{brown2025positive} provides a criterion for verifying that a form is logarithmic. 

\begin{proposition}[\cite{brown2025positive}, Prop.~1.15]\label{Prop:logform}
Let $X$ be a complex variety and $Y\subset X$ a hypersurface. 
For a differential form $\omega$ on $X$ with simple poles along $Y$, we have
\[
\omega \in \Omega_{\log}(X\setminus Y) 
\iff 
\operatorname{Res}_D(\omega) \in \Omega_{\log}(Y^{\mathrm{reg}}\cap D)
\quad\text{for all prime divisors } D\subset Y.
\]
\end{proposition}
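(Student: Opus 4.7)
The plan is to prove both implications by reducing to the case of a simple normal crossings divisor through an embedded resolution $\rho\colon \tilde X \to X$, and then performing a local computation combined with induction on $\dim X$. By the very definition of logarithmic forms on singular varieties, $\omega \in \Omega_{\log}(X\setminus Y)$ holds if and only if $\rho^{*}\omega \in \Omega_{\log}(\tilde X\setminus \rho^{-1}(Y))$, and Poincar\'e residues are compatible with pullback along $\rho$ over the smooth locus of $Y$. This reduces the question to a local statement on $\tilde X$ with $E := \rho^{-1}(Y)$ in simple normal crossings position.

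In the SNC setting, choose local coordinates so that $E = \{x_1\cdots x_r = 0\}$ near a chosen point. A rational form with at most simple poles along $E$ admits a unique local decomposition into summands of the shape $\frac{1}{\prod_{i\in I} x_i}\, dx_I\wedge \alpha_I$, with $\alpha_I$ holomorphic and independent of $dx_1,\ldots,dx_r$, and the form is logarithmic precisely when it reduces to wedges of the basic factors $dx_i/x_i$ with a holomorphic prefactor, equivalently when no summand with $|I|<r$ obstructs the logarithmic normal form along the codimension-two intersections. The Poincar\'e residue $\operatorname{Res}_{\{x_j=0\}}(\omega)$ extracts the coefficient of $dx_j/x_j$ and restricts it to $\{x_j=0\}$, yielding a form on a divisor whose poles lie along its intersections with the remaining components of $E$. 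The forward implication $(\Rightarrow)$ is then immediate: if $\omega$ is in logarithmic normal form, each of its residues is in logarithmic normal form on the corresponding divisor, hence lies in $\Omega_{\log}(Y^{\mathrm{reg}}\cap D)$.

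For the backward implication, which is the main obstacle, I would argue by induction on $\dim X$. Given that every residue $\operatorname{Res}_D(\omega)$ is logarithmic on $Y^{\mathrm{reg}}\cap D$, the inductive hypothesis applied to each pair $(D, Y\cap D)$ produces a logarithmic normal form for the corresponding residue term. Propagating this through the local decomposition of $\omega$ on $\tilde X$ forces all obstructive summands to vanish along the codimension-two strata of $E$, so that the remaining expression is globally in logarithmic normal form. The delicate technical point is that the hypothesis is imposed only on the smooth locus $Y^{\mathrm{reg}}$ of the \emph{original} $Y$, whereas the SNC analysis lives on $\tilde X$ and involves new exceptional components; I would bridge this by observing that residues of $\rho^{*}\omega$ along exceptional divisors are generically determined, via the birationality of $\rho$, by the residues of $\omega$ on $Y^{\mathrm{reg}}$, so that the stated residue hypothesis controls the full logarithmic structure upstairs and hence yields $\omega \in \Omega_{\log}(X\setminus Y)$.
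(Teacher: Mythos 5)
The paper does not prove this statement; it is imported verbatim as~\cite[Prop.~1.15]{brown2025positive}, so there is no in-paper argument to compare against. Evaluating your sketch on its own terms: the forward implication is fine (residues of a logarithmic form are logarithmic, via the SNC normal form and compatibility of residues with pullback along a resolution), but the backward direction has a genuine gap at exactly the spot you flag as ``delicate.''

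The proposed bridge — that residues of $\rho^{*}\omega$ along exceptional divisors are ``generically determined, via the birationality of $\rho$, by the residues of $\omega$ on $Y^{\mathrm{reg}}$'' — is not correct. An exceptional divisor $E_j$ maps into the \emph{singular} locus of $Y$, a proper closed subset of smaller dimension; $\rho$ restricted to $E_j$ is not birational onto any open piece of $Y^{\mathrm{reg}}$ or of any component $D$, so no residue data on $Y^{\mathrm{reg}}$ transfers directly. At best the anticommutativity of iterated residues constrains $\operatorname{Res}_{E_j}(\rho^{*}\omega)$ along the intersections $E_j\cap\tilde D$ with strict transforms, but not along $E_j\cap E_{j'}$ or away from those loci, so one cannot conclude that $\operatorname{Res}_{E_j}(\rho^{*}\omega)$ is logarithmic. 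Moreover your induction also needs that $\rho^{*}\omega$ has only \emph{simple} poles along the $E_j$; this too is not automatic from ``$\omega$ has a simple pole along $Y$'' on a smooth $X$, because the discrepancy $\operatorname{ord}_{E_j}(\rho^{*}f)-\operatorname{ord}_{E_j}(J_\rho)$ can exceed~$1$ and must be controlled by the residue hypothesis, not the pole hypothesis alone. The concrete phenomenon is visible already for the cuspidal cubic: for $\omega'=\tfrac{x\,dx\wedge dy}{y^2-x^3}$ the residue on $Y$ pulls back to the holomorphic form $-dt$ and $\omega'$ is indeed logarithmic, but the reason is that $\rho^{*}\omega'$ has \emph{no} pole at all along the exceptional divisor after the first blow-up — a cancellation forced by $x$ vanishing at the cusp, i.e.\ by the numerator lying in the adjoint ideal. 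This is precisely the nontrivial content that the argument must supply and that ``birationality'' does not. The correct mechanism is local and of adjoint-ideal/conductor type (or, in Brown–Dúpont's setting, comes out of the Hodge-theoretic description of $\Omega^{n}_{\log}$ as the top Hodge piece of $H^{n}(X\setminus Y)$), not a transfer of residues across the resolution. As written, the backward implication is not proved.
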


The criterion in Proposition~\ref{Prop:logform} shows that logarithmicity can fail when the pole divisor is singular. 
The following example illustrates this phenomenon for a cuspidal cubic in the plane.

\begin{example}\label{Example:non-log Pole}
Consider the form $\omega=\tfrac{dx\wedge dy}{y^2-x^3}$ on $\PP^2$, with pole divisor $Y=\{y^2-x^3=0\}$. 
The form $\omega$ has a simple pole along $Y$. 
By Proposition~\ref{Prop:logform}, $\omega$ is logarithmic if and only if its residue $\operatorname{Res}_{\{y^2=x^3\}}(\omega)=\tfrac{dy}{-3x^2}=\tfrac{dx}{-2y}$ on the cubic $Y$ is logarithmic. 
This, in turn, requires that $\rho^*(\operatorname{Res}_{\{y^2=x^3\}}(\omega))$ have at most simple poles along the singular locus of $Y$, where $\rho\colon\CC\to\{y^2=x^3\}$, $t\mapsto(t^2,t^3)$, is the embedded resolution in the $(x,y)$-chart. 
We compute $\rho^*(\operatorname{Res}_{\{y^2=x^3\}}(\omega))=\tfrac{dt}{t^2}$, which has a pole of order two at $t=0$; hence $\omega$ is not logarithmic. 
The failure of logarithmicity arises from the cusp singularity of the non–simple normal crossings divisor $Y$, where the residue acquires a higher-order pole.
\end{example}

\subsection{Positive geometry}
\label{subsec:positive-geometry}

Building on the notation and definitions from Section~\ref{subsec:log-forms}, we now recall the notion of a positive geometry.

\begin{definition}[Positive geometry {\cite{arkani2017positive}}]\label{DefPositiveGeometry}
A \emph{positive geometry} of dimension $d$ is a pair $(X,X_{\geq 0})$, where $X$ is a $d$-dimensional irreducible complex projective variety defined over $\RR$, and $X_{\geq 0}\subset X(\RR)$ is a nonempty, closed, full-dimensional semialgebraic subset of the real locus $X(\RR)$. 
We require the existence of a unique rational differential form $\omega$ on $X$ with logarithmic poles precisely along the Zariski closure of the topological boundary $\partial X_{\geq 0}$.  
For each prime boundary divisor $D\subset \partial X:=\overline{X_{\geq 0}}$, we set $D_{\geq 0}:=D\cap X_{\geq 0}\subset D(\RR)$. 
The residue $\operatorname{Res}_D(\omega)$ then equips the pair $(D,D_{\geq 0})$ with the structure of a positive geometry. 
At dimension zero, a point is a positive geometry with canonical form $\omega=\pm 1$.
\end{definition}

\begin{remark}[On regularity assumptions]\label{Rem:Regularity}
The original formulation of positive geometries in~\cite{arkani2017positive} imposed stronger regularity conditions: the interior $X_{\geq 0}^{\circ}$ of the nonnegative part $X_{\geq 0}$ was required to be an open, oriented $d$-dimensional manifold whose closure equals $X_{\geq 0}$, and the ambient variety $X$ was assumed to be normal. 
Under these hypotheses, planar Vandermonde cells do not qualify as positive geometries in the strict sense, since their boundaries contain a cuspidal cubic whose cusp lies in the topological interior of that boundary component.  

Later work has relaxed these conditions. 
In particular, non-normal positive geometries $(X,X_{\geq 0})$ satisfying $(X\setminus X_{\mathrm{reg}})\cap X_{\geq 0}^{\circ}=\emptyset$ have been studied previously. 
In the present work, we consider the stronger case where the intersection $(X\setminus X_{\mathrm{reg}})\cap X_{\geq 0}^{\circ}$ may be nonempty.
\end{remark}

\noindent The next examples show how positive geometries go beyond the regular setting of~\cite{arkani2017positive}.

\begin{example}\label{Ex: Positive geometries}
    \begin{itemize}
        \item \emph{Simplex.} 
        An $n$-dimensional simplex $\mathcal{S}:=\{(x_1,\dots,x_n)\mid 0\leq x_i, \sum_{i=1}^nx_i=1\}\subset \RR^n\subset \PP^n$ is a positive geometry with canonical form
        \[
        \omega_{\mathcal{S}}=\frac{d\boldsymbol{x}}{(1-\sum_{i=1}^nx_i)\prod_{i=1}^{n}x_i}.
        \]
        
        \item \emph{Non-normal boundary with smooth interior} (\cite{arkani2017positive}, Example~5.3.1). 
        For $a\neq 0$, the teardrop region enclosed by the nodal cubic $D=\{y^2 \leq x^2(x+a^2)\}$ has canonical form
        \[
        \omega_D=\frac{dx\wedge dy}{y^2-x^2(x+a^2)}.
        \]
        
        \item \emph{Non-normal boundary with smooth interior} (\cite[Fig.~4, Remark~2.12(2)]{kohn2025adjoints}.
        The semi-algebraic region $E :=\{y^2\leq x^3 , x\geq y^2\}$ bounded by a cuspidal cubic and a parabola has canonical form
        \[
        \omega_E=\frac{x^2+y^2+xy+y}{(x-y^2)(y^2-x^3)}\,dx\wedge dy.
        \]
    \end{itemize}
\end{example}

\medskip
\noindent\textbf{Non-normal boundaries.}
Non-normal positive geometries were already discussed in~\cite[Example~5.3.1]{arkani2017positive} and~\cite[Remark~2.12(2)]{kohn2025adjoints}; see Examples~\ref{Ex: Positive geometries}(2)--(3). 
In these cases, singularities of irreducible boundary components—necessarily non-normal for singular curves—do not lie in the interior of the boundary but form zero-dimensional strata.
Smooth interiors are essential for structural results, such as the characterization of certain planar positive geometries as \emph{quasi-regular rational polypols} in~\cite[Proposition~2.11]{kohn2025adjoints}.  

\medskip
\noindent\textbf{Singular interiors.}
In our setting, we also allow semi-algebraic sets whose interior $X_{\geq 0}^{\circ}$ is not smooth. 
This relaxation includes additional examples of positive geometries, most notably the planar Vandermonde cells. 
To work with singular boundaries, we fix a notion of holomorphic and meromorphic forms on singular $X$ via their pullbacks to smooth models.

\begin{example}[The first nontrivial Vandermonde cell]
The Vandermonde cell $\Pi_{3,3}$ is the semi-algebraic region bounded by the line $b_2$ and the cuspidal cubic $b_3$. 
Since the cusp lies in the interior of the cubic, this region is neither orientable nor a manifold. 
Nevertheless, planar Vandermonde cells are positive geometries in the sense of Definition~\ref{DefPositiveGeometry}.
\end{example}

A key structural property of canonical forms, \emph{additivity}, will play a central role in our later analysis of Vandermonde cells; see~\cite[\S3]{arkani2017positive} for a general discussion.
For a positive geometry $(X,X_{\geq 0})$, we say that a collection $(X,X_{\geq 0,1}),\dots,(X,X_{\geq 0,n})$ \emph{subdivides} it if $\bigcup_i X_{\geq 0,i}=X_{\geq 0}$ and the interiors $X_{>0,i}$ are pairwise disjoint. 
Unlike classical polytopal subdivisions, we do not require that pairwise intersections of the $X_{\geq 0,i}$ form boundary components of both.

\begin{proposition}
\label{AdditivityofCanForms}
Let $(\PP^d,P_{1}),\dots,(\PP^d,P_{n})$ be projective polytopes subdividing the projective polytope $(\PP^d,P)$, oriented so that the induced orientations on their boundaries are compatible. Then
\[
\Omega_P=\sum_{i}\Omega_{P_i}.
\]
\end{proposition}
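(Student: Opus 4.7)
The plan is to derive the identity from the uniqueness of canonical forms for projective polytopes combined with an inductive residue computation along the boundary. I would set $\omega := \sum_{i=1}^{n} \Omega_{P_i}$ and aim to show that $\omega$ is a rational $d$-form on $\PP^d$ whose logarithmic poles are supported precisely on the Zariski closure of $\partial P$ and whose residue along every prime boundary divisor of $P$ matches that of $\Omega_P$. Uniqueness of the canonical form of $(\PP^d, P)$ then forces $\omega = \Omega_P$.

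The argument proceeds by induction on $d$, with $d = 0$ immediate, since each $\Omega_{P_i} = \pm 1$ and the orientation compatibility makes the signs add correctly. For the inductive step I would first localize the possible poles of $\omega$. Because each $\Omega_{P_i}$ has logarithmic poles only along $\partial P_i$, the pole locus of $\omega$ is contained in $\bigcup_i \partial P_i$, which I partition into \emph{interior walls} $W$ not contained in $\partial P$ and \emph{exterior divisors} $D$ that are prime components of $\partial P$. For a projective polytopal subdivision, each interior wall is shared as a facet by exactly two cells $P_i$ and $P_j$, while each exterior divisor $D$ is subdivided by the facets $\{D \cap P_i\}$ of the subdivision that happen to lie on $D$.

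Along an interior wall $W$ the orientation compatibility makes the boundary orientations induced on $W$ from $P_i$ and from $P_j$ opposite, so $\operatorname{Res}_W(\Omega_{P_i}) + \operatorname{Res}_W(\Omega_{P_j}) = 0$, and hence $\omega$ is regular along every such $W$. Along an exterior divisor $D \subset \partial P$, the pieces $\{D \cap P_i\}$ form a compatibly oriented subdivision of $D \cap P$ inside the $(d-1)$-dimensional projective polytope $D$. The inductive hypothesis, applied in dimension $d-1$, yields $\sum_i \operatorname{Res}_D(\Omega_{P_i}) = \Omega_{D \cap P} = \operatorname{Res}_D(\Omega_P)$. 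Combining these two computations shows that $\omega$ has logarithmic poles exactly along $\partial P$ and matches $\Omega_P$ residue-by-residue there.

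The final step is to invoke uniqueness of canonical forms: a rational $d$-form on $\PP^d$ with logarithmic poles along a prescribed boundary divisor is determined by its residues, which can be established itself by downward residue induction along the boundary stratification, so no circular reasoning arises. The principal technical obstacle I anticipate is the careful bookkeeping of orientations, namely verifying that the hypothesized compatibility of orientations genuinely implies both opposite induced orientations across every interior wall and agreement with the ambient orientation on exterior divisors; the inductive application also requires that $D \cap P$ inherit a subdivision in the sense of the proposition, which relies on the polytopal regularity of the cells. Once these orientation and combinatorial points are pinned down, the residue cancellation on interior walls and the inductive residue matching on exterior divisors reduce the proposition to the standard uniqueness statement.
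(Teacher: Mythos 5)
Your inductive residue-cancellation argument is the standard route (it is essentially the one in \cite{arkani2017positive}) and is correct under a polyhedral-complex hypothesis, but it does not match the paper's stated strategy and it silently strengthens the hypotheses. The paper only remarks, just below the proposition, that the proof ``reduces to the case of simplices'' and defers the details to \cite[\S3]{arkani2017positive}; more to the point, the notion of \emph{subdivision} fixed in the paragraph preceding Proposition~\ref{AdditivityofCanForms} requires only that $\bigcup_i X_{\geq 0,i}=X_{\geq 0}$ with pairwise disjoint interiors, and explicitly does \emph{not} require pairwise intersections to be common faces. Your claim that ``each interior wall is shared as a facet by exactly two cells $P_i$ and $P_j$'' therefore need not hold: a hyperplane $W$ interior to $P$ may carry facets of several cells on either side that overlap without matching up, and the naive pairwise cancellation $\operatorname{Res}_W(\Omega_{P_i})+\operatorname{Res}_W(\Omega_{P_j})=0$ is not literally available. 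The cancellation can still be salvaged by applying the inductive hypothesis on $W\cong\PP^{d-1}$ to the two compatibly oriented families of facets lying on $W$ (each family subdivides the same region of $W$, so the two signed sums agree and cancel), or, equivalently and more cleanly, by first passing to the common refinement of the $P_i$ by all facet hyperplanes so that the complex becomes face-to-face --- which is exactly what the paper's reduction to simplices accomplishes. You flag a related concern for exterior divisors $D$, but the genuine gap sits at the interior walls. Once that step is repaired, the rest of your argument --- residue matching along $\partial P$ by dimensional induction and uniqueness of the canonical form on $\PP^d$ --- goes through.
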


The proof of Proposition~\ref{AdditivityofCanForms} reduces to the case of simplices, though additional subtleties arise in the non-polytopal setting. 
Additivity also extends beyond the polytope case: it allows one to compute canonical forms of semi-algebraic regions by approximating them with convergent sequences of polytopal subdivisions; see~\cite[§10]{arkani2017positive} and Lemma~\ref{ConvexApproximation}. 
In later sections, we will encounter the formulation of positive geometries in~\cite{brown2025positive}, where triangulation-independence follows directly from the construction.

\medskip
We conclude this subsection with a remark on the internal consistency of this framework and on the structural constraints implied by the uniqueness of canonical forms.

\begin{remark}[Additivity and vanishing genus]
Allowing singularities in the interior of $X_{\geq 0}$ remains compatible with the additivity principle stated above, provided that the compatibility conditions are suitably weakened. 
For example, in Lemma~\ref{lem:typeI} the semi-algebraic set of type~I can be subdivided into two regions whose boundaries are smooth, so their canonical forms can be computed as in~\cite{kohn2025adjoints}; additivity then yields the canonical form of the original set.

Moreover, the uniqueness of the canonical form $\omega$ of a positive geometry $(X,X_{\geq 0})$ implies that $X$ admits no nonzero holomorphic forms, since multiplication by such a form would preserve all defining properties. 
Consequently, $X$ and every boundary hypersurface $Y$ appearing in the boundary recursion must have vanishing geometric genus. 
In Theorem~\ref{Thm:Vanishing geometric genus} we use Proposition~\ref{prop:Unirational} to show that the boundary hypersurfaces of Vandermonde cells indeed have genus~$0$, where genus denotes the invariant introduced in~\cite{brown2025positive} generalizing the geometric genus for smooth varieties.
\end{remark}

Having established the general framework, we now apply it to Vandermonde cells.

\subsection{Canonical forms of planar Vandermonde cells via subdivision}

In this subsection we determine canonical forms of planar Vandermonde cells, thereby extending the class of known planar positive geometries beyond polypols.  
Our approach relies on the additivity property of canonical forms (Proposition~\ref{AdditivityofCanForms}), which allows us to compute the canonical form of a Vandermonde cell by decomposing it into simpler regions whose canonical forms are easier to determine.

\medskip
\noindent
This additive structure provides the foundation for our explicit constructions.  
In the planar case, each Vandermonde cell can be subdivided into regions bounded by rational curves, so that its canonical form is obtained as the sum of the canonical forms of these elementary pieces.  
We now describe the three basic configurations that appear in such subdivisions.

\medskip
By Lemma~\ref{Equations planar boundaries}, the irreducible boundary components $\{b_k=0\}_{3 \leq k\leq n}$ of $\Pi_{n,3}$ are cuspidal cubics. 
Up to a linear change of coordinates, each such cubic is given by $y^2=x^3$. 
Since canonical forms are compatible with coordinate changes via pullback and pushforward, all computations can be performed in this normal form; canonical forms in other coordinates then follow by pullback, pushforward, or geometric arguments, as degrees and incidences are preserved under linear transformations.

\medskip
All semi-algebraic regions arising in the subdivision of planar Vandermonde cells (see \S\ref{SubdivisionOfPlanarCells}) are bounded by a cuspidal cubic and a line.  
Their configurations fall into the following three types:
\begin{enumerate}
    \item[{\rm (I)}] the cubic and the line intersect in three distinct points;  
    \item[{\rm (II)}] they intersect in two distinct points, with one intersection of multiplicity two away from the cusp;  
    \item[{\rm (III)}] they intersect in two distinct points, with one intersection of multiplicity two at the cusp.  
\end{enumerate}
Explicit realizations and canonical forms for these three configurations are provided in Lemmas~\ref{lem:typeI}, \ref{lem:typeII}, and~\ref{lem:typeIII}. 
The following proposition shows that, after an appropriate projective change of coordinates, any semi-algebraic region bounded by a cuspidal cubic and a line whose cusp lies on the topological boundary can be reduced to one of these three types. 
Hence it suffices to determine canonical forms for these representative examples. 
Canonical forms of types~I and~III also appear in~\cite[Propositions~5.3,~5.5.2]{brown2025positive}.

\begin{proposition}\label{Prop:SemiAlgTypes}
Let $A \subset \RR^2 \subset \PP^2$ be a compact, connected, planar semi-algebraic set bounded by a line and a cuspidal cubic. 
Assume that the cusp and exactly two vertices (intersection points of the boundary curves where the boundary switches) lie on the topological boundary of $A$. 
Then, up to a projective coordinate change $g\in {\rm Aut}(\PP^2)\cong {\rm PGL}(3)$, the region $A$ is equivalent to one of the following sets:
\begin{equation}\label{eq:semi_alg_types}
A_{I}=\{\,y^2<x^3,\;x<1\,\},\  
A_{II}=\{\,y^2<x^3,\;y-\tfrac{3}{2}x+\tfrac{1}{2}>0,\;y<1\,\},\ 
 A_{III}=\{\,y^2<x^3,\;y<x,\;y>0\,\}.
\end{equation}

\end{proposition}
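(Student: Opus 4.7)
The plan is to proceed by first normalizing the cuspidal cubic via a projective transformation, then analyzing the possible intersection patterns of the line with the cubic using Bezout's theorem, and finally placing the line in canonical position using the residual symmetries of the normalized cubic.

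Since ${\rm PGL}(3,\RR)$ acts transitively on plane cuspidal cubics (all such curves are projectively equivalent, with a unique cusp and a unique flex), I would begin by applying a projective transformation to assume that the cuspidal cubic bounding $A$ is $C = \{y^2 = x^3\}$ with cusp at the origin and flex at $(0:1:0)$. The residual stabilizer of $C$ inside ${\rm PGL}(3)$ is the one-parameter scaling group $(x,y)\mapsto(b^2 x, b^3 y)$ together with the reflection $(x,y)\mapsto(x,-y)$; these are the symmetries available for positioning the line $\ell$.

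By Bezout's theorem, $\ell$ meets $C$ in exactly three points counted with multiplicity over $\PP^2_\CC$. Since the cusp is a singular point of multiplicity two on $C$, any line through the cusp contributes multiplicity at least two there, with the cuspidal tangent $\{y=0\}$ attaining the maximal value three. The assumption that $\partial A$ contains exactly two vertices where line and cubic cross rules out configurations with a single triple contact (the cuspidal tangent or a flex tangent), since these produce at most one vertex, and the remaining cases split into precisely three types: (I) three distinct intersection points, two of which form the vertices of $A$; (II) a simple tangency of multiplicity two at a smooth point of $C$ away from the cusp, plus a third transverse intersection; and (III) a line through the cusp (multiplicity two there) meeting $C$ at one additional transverse point. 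In Type~III, the cuspidal tangent $\{y=0\}$ naturally appears as an auxiliary boundary arc near the cusp because only one of the two real branches of $C$ emerges from the cusp into $A$, which accounts for the extra inequality $y>0$ in the description of $A_{III}$.

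The main obstacle is that the residual stabilizer of $C$ is only one-dimensional, while a line in $\PP^2$ carries two parameters, so the scaling symmetries of the standard cubic alone do not suffice to move an arbitrary line of Type~I or Type~II to the prescribed canonical position. Additional geometric constraints must be brought to bear: the compactness of $A$ in $\PP^2$, the requirement that the cusp lie on the bounding arc of $C$, and the real structure of the intersection pattern together pin down the configuration up to the available symmetry. Verifying that each case reduces precisely to one of $A_{I}$, $A_{II}$, or $A_{III}$, including the auxiliary inequalities arising from the cuspidal tangent in Type~III, is the technical heart of the argument.
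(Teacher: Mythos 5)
Your approach is genuinely different from the paper's. You normalize the cubic to $y^2=x^3$ first and try to move the line $L$ into canonical position via the residual stabilizer $(x,y)\mapsto(b^2x,\pm b^3y)$, whereas the paper's proof first places the cusp together with the intersection points at fixed coordinates (using transitivity of ${\rm PGL}(3)$ on suitable point configurations) and then checks that the cubic is forced into Weierstrass form. Your B\'ezout-based case split into Types~I, II, III is correct and matches the paper's three cases.

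The gap is the one you flag yourself, but your diagnosis of where it sits is slightly off. For Type~II there is actually no obstruction: once the cubic is normalized, a line tangent at a smooth point is determined by its tangency point $(t_0^2,t_0^3)$, so these lines form a one-parameter family on which the scaling $t\mapsto bt$ already acts transitively (take $b=1/t_0$); similarly Type~III lines through the cusp form a one-parameter family, parametrized by slope. The real difficulty is Type~I, where lines meeting $C$ in three distinct points form a two-parameter family while the residual stabilizer is only one-dimensional. To close this you must extract a concrete extra constraint from the hypotheses---compactness of $A\subset\RR^2$ and the requirement that only two of the three intersection points serve as vertices of $A$---and show that this forces one of the three intersection points into the position of the flex $(0:1:0)$, cutting $L$ down to a one-parameter family on which the scaling can act transitively. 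You list the relevant hypotheses but never derive this constraint, so as written your Type~I reduction does not terminate. Supplying that argument is precisely the ``technical heart'' you defer, and it is where a proof along your lines must differ from a sketch.
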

\begin{figure}[htbp]
\centering

\begin{minipage}{0.2\textwidth}
  \centering
  \includegraphics[width=\linewidth]{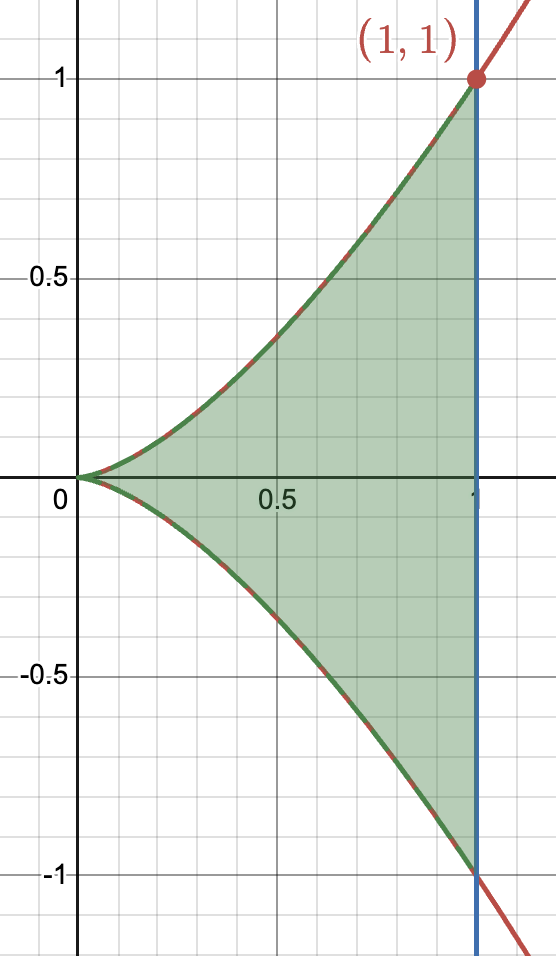}
  {\small  $A_I$}
\end{minipage}
\hfill
\begin{minipage}{0.2\textwidth}
  \centering
  \includegraphics[width=\linewidth]{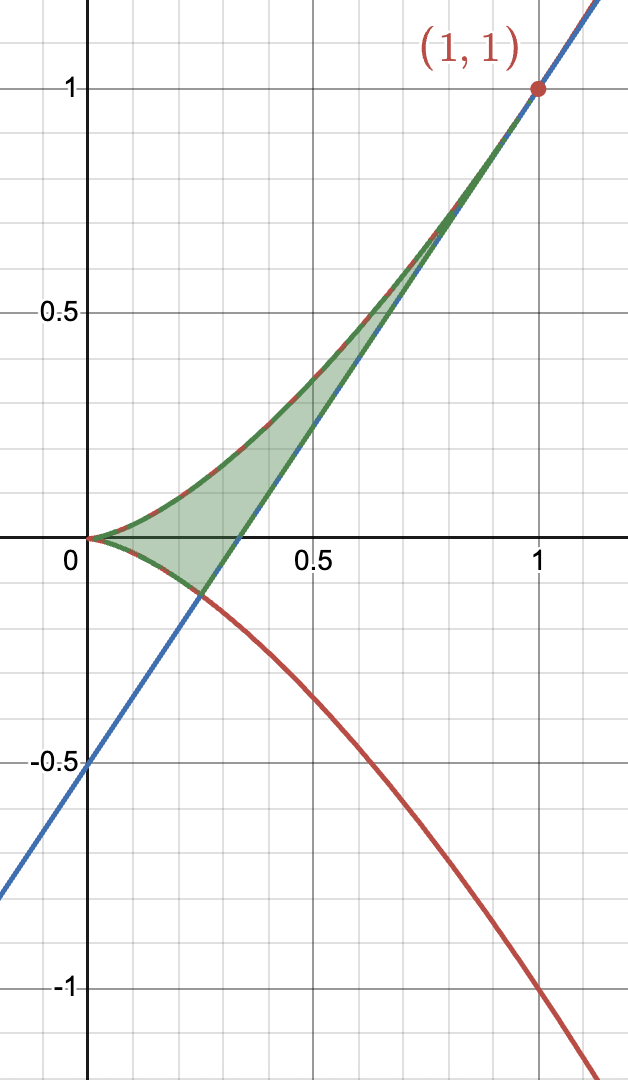}
  {\small $A_{II}$}
\end{minipage}
\hfill
\begin{minipage}{0.2\textwidth}
  \centering
  \includegraphics[width=\linewidth]{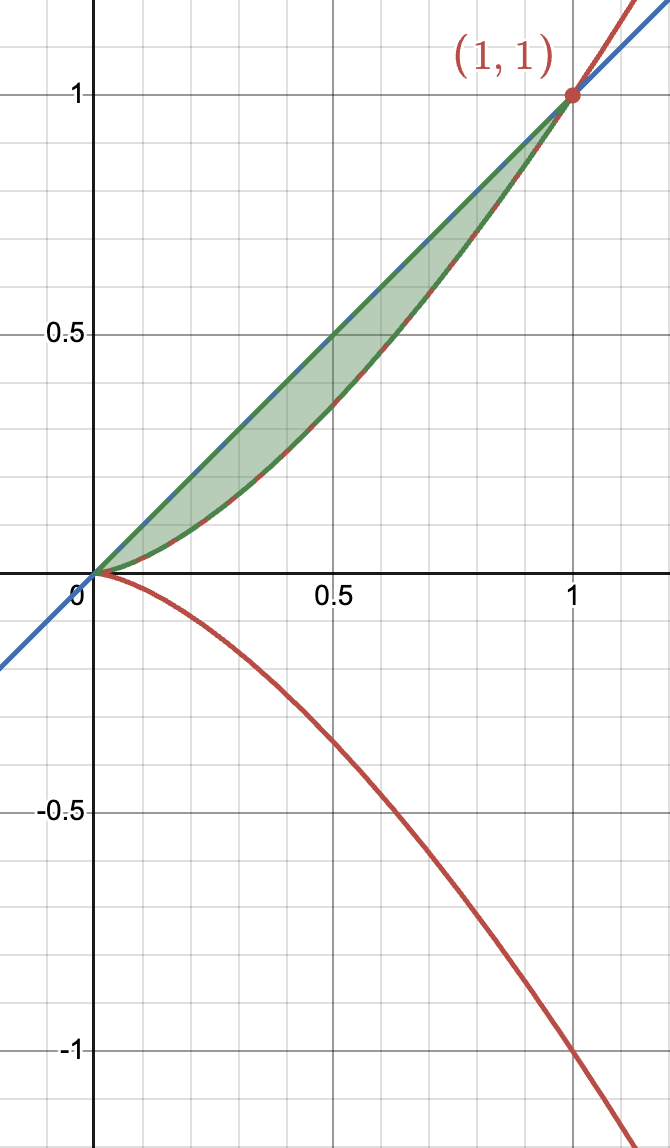}
  {\small $A_{III}$}
\end{minipage}

\caption{Semialgebraic sets arising in the subdivision of the planar Vandermonde cell from Example~\ref{Ex:GeometricArgumentforFirstCanForm}.}
\label{fig:planar}
\end{figure}

\begin{proof}
The automorphism group ${\rm Aut}(\PP^2)={\rm PGL}(3)$ acts $4$-transitively on the points of $\PP^2$. 
Cuspidal cubics are preserved under projective transformations, as they are precisely the cubic curves admitting a Weierstrass equation of the form $y^2=p(x)$ with $p(x)$ a monic cubic possessing a triple root.  

Let $A$ be bounded by a cuspidal cubic $C$ and a line $L$. 
Since $A$ is compact, the line and the cubic meet in exactly two distinct real points, implying that the third intersection point, counted with multiplicity, is also real. 
Up to projective equivalence, the configuration $(C,L)$ falls into one of the three cases below.

\medskip
\textbf{Case~1.} If $L\cap C=\{p_1,p_2,p_3\}$ consists of three distinct real points, choose $g\in {\rm PGL}(3)$ sending the cusp to $(0\!:\!0\!:\!1)$ and the intersection points to $(1\!:\!0\!:\!1)$, $(0\!:\!1\!:\!1)$, and $(0\!:\!1\!:\!0)$. 
In these coordinates, the unique connected component of $\PP^2\setminus(L\cup C)$ whose boundary contains the cusp and two intersection points is $A_I$.  

\medskip
\textbf{Case~2.} Suppose $L\cap C=\{p_1,p_2\}$, where $p_1$ has multiplicity~$2$ and is distinct from the cusp. 
Send the cusp to $(0\!:\!0\!:\!1)$, the double point $p_1$ to $(1\!:\!1\!:\!1)$, and the simple intersection $p_2$ to $(\tfrac{1}{4}\!:\!\tfrac{1}{8}\!:\!1)$. 
In this normalization, the region bounded by $L$ and $C$ with two vertices corresponds to $A_{II}$.  

\medskip
\textbf{Case~3.} If the non-transversal intersection coincides with the cusp, map the cusp to $(0\!:\!0\!:\!1)$ and the remaining intersection point $p_2$ to $(1\!:\!1\!:\!1)$. 
The unique connected component with two vertices in this configuration is $A_{III}$.  
\end{proof}


\begin{remark}
   \begin{itemize}
       \item In  the following proposition, 
       we construct a subdivision of the planar Vandermonde cell $\Pi_{n,3}$ such that each component region is bounded precisely by a cuspidal cubic and a line. 
By Proposition~\ref{Prop:SemiAlgTypes}, every such region is projectively equivalent to one of the semi-algebraic types described in \eqref{eq:semi_alg_types}. 
Consequently, the canonical form of $\Pi_{n,3}$ is obtained by summing the canonical forms of these constituent regions. (We will explicitly compute these canonical forms in Lemmas~\ref{lem:typeI} and~\ref{lem:typeII}.)
Moreover, each semi-algebraic region arising in the subdivision has total boundary degree~$4$, so its canonical form is uniquely determined by the \emph{linear adjoint polynomial}.  
Because a linear adjoint is specified by any two distinct points lying on it, it can be characterized through two incidence relations; see Example~\ref{Ex:GeometricArgumentforFirstCanForm} for the explicit calculation. 
   \end{itemize}
\end{remark}

\begin{proposition}\label{SubdivisionOfPlanarCells}
For every $n\ge 3$, the planar Vandermonde cell $\Pi_{n,3}$ admits a decomposition into $n-3$ semi-algebraic sets of type~$A_{II}$ 
and $n-2$ semi-algebraic sets of type~$A_{III}$ as in \eqref{eq:semi_alg_types}. 
\end{proposition}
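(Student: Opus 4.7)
The plan is to build the subdivision inductively via the natural nesting $\Pi_{3,3}\subset\Pi_{4,3}\subset\cdots\subset\Pi_{n,3}$ induced by the inclusion of faces $\Delta_{n-2}\hookrightarrow\Delta_{n-1}$, and then classify each piece via Proposition~\ref{Prop:SemiAlgTypes}. The induction hypothesis is that $\Pi_{n-1,3}$ decomposes into $n-4$ pieces of type $A_{II}$ and $n-3$ of type $A_{III}$, and the key claim is that the annular region $R_n:=\overline{\Pi_{n,3}\setminus\Pi_{n-1,3}}$ splits, under a single well-chosen chord, into exactly one piece of each type.

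\medskip

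For the base case $n=3$, the cell $\Pi_{3,3}$ is bounded only by the line $b_2$ and the cuspidal cubic $b_3$ with cusp at $(1/3,1/9)$. A direct resultant computation, parallel to the proof of Lemma~\ref{Equations planar boundaries}, identifies the intersection points of $b_2\cap b_3$ together with their multiplicities, and locates the cusp on the topological boundary of the cell. Matching this configuration against the three standard types of Proposition~\ref{Prop:SemiAlgTypes} shows that $\Pi_{3,3}$ is projectively equivalent to $A_{III}$, confirming the base case.

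\medskip

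For the inductive step, I will analyze $R_n$ using the parametrization of boundary components from Lemma~\ref{lem:Parametrisation for all d} restricted to the multiplicity vectors newly introduced at index $n$. The boundary of $R_n$ consists of an outer arc of the new cubic $b_n$ containing the cusp $(1/n,1/n^2)$, an inner arc along $b_{n-1}$, and connecting segments on the line $b_2$. I will then exhibit a single chord through the cusp of $b_n$ that cuts $R_n$ into two regions, each bounded by one line and one cubic arc. Applying Proposition~\ref{Prop:SemiAlgTypes}, the cusp-containing region exhibits the line-through-cusp configuration of type $A_{III}$, while the complementary region exhibits the tangent-at-smooth-point configuration of type $A_{II}$. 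Combined with the inductive hypothesis, this yields the claimed totals of $n-3$ and $n-2$.

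\medskip

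The main obstacle is the explicit construction of the splitting chord and the verification of its intersection pattern with $b_n$. This requires exploiting the uniform structure of the equations $b_k$ from Lemma~\ref{Equations planar boundaries} together with the cyclic combinatorial information from Theorem~\ref{Thm:TypeofCyclicPolytope}. For small $n$ the construction can be verified symbolically as in Example~\ref{Computations in Maccauley}, but a proof valid for all $n$ requires coordinate analysis that tracks the intersections of $b_n$ with $b_{n-1}$, with $b_2$, and with the chord, in order to confirm the multiplicity pattern demanded by each of the two types.
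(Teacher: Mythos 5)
Your overall strategy --- induction on $n$ via the nesting $\Pi_{3,3}\subset\cdots\subset\Pi_{n,3}$, cutting each annular increment by a single chord, and classifying the two halves via Proposition~\ref{Prop:SemiAlgTypes} --- is the same as the paper's, but several details are wrong and the decisive step is deferred rather than carried out. To start with, the base case is misclassified: the line $\{b_2=0\}=\{2y-3x+1=0\}$ meets $\{b_3=0\}$ with multiplicity two at $(1,1)$ and once at $(\tfrac12,\tfrac14)$, and the cusp of $b_3$ sits at $(\tfrac13,\tfrac19)$; since the double contact is \emph{away} from the cusp, $\Pi_{3,3}$ is of type $A_{II}$, not $A_{III}$ as you assert. (You may have been steered wrong by the statement of the proposition, whose counts of $A_{II}$ and $A_{III}$ appear to be swapped relative to the proof of Theorem~\ref{Thm:PlanarCellsarePositiveGeometries}, which opens the count with a single region of type $A_{II}$; but the intersection geometry is unambiguous.) Second, the annular region $R_n:=\overline{\Pi_{n,3}\setminus\Pi_{n-1,3}}$ is not bounded the way you describe: its boundary consists only of arcs of $b_{n-1}$ and $b_n$ meeting at exactly two corners, namely $(1,1)$ and the cusp $c_{n-1}$ of the \emph{inner} cubic $b_{n-1}$; there is no ``connecting segment on $b_2$.'' The cusp $c_n$ of the outer cubic is an interior point of the outer arc, not a vertex.

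Third, and most substantively, the dividing chord you propose is the wrong one. The paper cuts $R_n$ with the line $l(c_{n-1},(1,1))$ through the two corners; this meets $b_{n-1}$ with multiplicity $2$ at $c_{n-1}$ (the cusp) and $1$ at $(1,1)$, giving a piece of type $A_{III}$, and meets $b_n$ with multiplicity $1$ at $c_{n-1}$ and $2$ at $(1,1)$ (a smooth point of $b_n$), giving a piece of type $A_{II}$. You instead propose a chord ``through the cusp of $b_n$'' --- that is, through $c_n$, an interior point of the outer arc --- which would not separate $R_n$ into the two required line-and-cubic pieces. Your closing paragraph acknowledges that exhibiting the chord and verifying the intersection multiplicities is ``the main obstacle''; since this is precisely what the paper's proof supplies and what makes Proposition~\ref{Prop:SemiAlgTypes} applicable, the proposal identifies the route but does not close the argument.
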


\begin{proof}
By Lemma~\ref{Equations planar boundaries}, passing from $\Pi_{n,3}$ to $\Pi_{n+1,3}$ introduces a single new boundary component $b_{n+1}=0$. Using the parametrizations from Lemma~\ref{lem:Parametrisation for all d}, one sees that the new portion is the bounded region of the semi-algebraic set 
\[
\; \{\, b_{n+1}>0,\; b_n<0 \,\}.
\]
The boundary curves of this region meet at exactly two vertices: the point $(1,1)$ and the cusp $c_n$ of $\{b_n=0\}$.

Let $\tilde{c}_n=0$ denote the defining equation of the unique line through $c_n$ having triple contact (intersection multiplicity $3$) with $\{b_n=0\}$ at $c_n$. Then
\[
\Pi_{n+1,3}\setminus \Pi_{n,3}
\;=\;
\{\, b_{n+1}>0,\; b_n<0,\; \tilde{c}_n<0 \,\}.
\]
We further subdivide this region by the line $l(c_n,(1,1))$ through $c_n$ and $(1,1)$. It meets $\{b_n=0\}$ with multiplicity $2$ at $c_n$ and multiplicity $1$ at $(1,1)$, so there are no additional intersections. Likewise, $l(c_n,(1,1))$ meets $\{b_{n+1}=0\}$ with multiplicity $1$ at $c_n$ and, non-transversally, with multiplicity $2$ at $(1,1)$. Consequently, $l(c_n,(1,1))$ divides the simply connected region $\Pi_{n+1,3}\setminus \Pi_{n,3}$ into two simply connected components, each bounded by a line and a cuspidal cubic but with distinct tangency behavior. By Proposition~\ref{Prop:SemiAlgTypes}, these are precisely the semi-algebraic types~I and~II.

For the base case, $\Pi_{3,3}$ is itself of type~II (bounded by $\{b_2=0\}$ and the cuspidal cubic $\{b_3=0\}$ meeting with multiplicity $2$ at $(1,1)$). At each step $k\mapsto k+1$ ($k\ge 3$), we add exactly one new region of type~I and one of type~II. Therefore, by induction, for $\Pi_{n,3}$ (after $n-3$ steps) the total counts are
\[
\#\text{(type II)} = n-3,\qquad
\#\text{(type III)} = 1 + (n-3) = n-2,
\]
as claimed.
\end{proof}

We begin the canonical-form analysis of $A_I$ from \eqref{eq:semi_alg_types} in Proposition~\ref{Prop:SemiAlgTypes}. 
This can be viewed as the first non-trivial 
Vandermonde cell with the non-transversal intersection replaced by a transversal one.

\begin{lemma}[Semi-algebraic set $A_I$]\label{lem:typeI}
The planar semi-algebraic set $A:=\{(x,y)\mid x\le 1,\; y^2\le x^3\}$ bounded by the line $x=1$ and the cuspidal cubic $y^2=x^3$ is a positive geometry. 
\begin{itemize}
    \item[{\rm {\bf(a)}}] Its canonical form is
\begin{equation}\label{eq:A_I}
\omega_A=\frac{-2x}{(x-1)(y^2-x^3)}\,dx\wedge dy.
\end{equation}
 \item[{\rm {\bf(b)}}] Moreover, 
$\omega_{A}$ 
is logarithmic, and its residue along the cuspidal cubic $C=\{y^2=x^3\}$ agrees with the pushforward of the canonical form $\omega_{[-1,1]}$ of the interval $[-1,1]$ under the normalization
\[
\gamma\colon \mathbb{P}^1 \to C \subset \mathbb{P}^2,\qquad (t:1)\longmapsto (t^2:t^3:1).
\]
\end{itemize}
\end{lemma}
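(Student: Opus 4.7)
For part (a), the plan is to invoke the adjoint (Wachspress) characterization outlined in the remark preceding Proposition~\ref{SubdivisionOfPlanarCells}. Since $A$ is bounded by a line and a cuspidal cubic, with total boundary degree~$4$, the canonical form should take the shape
\[
\omega_A=\frac{\mathrm{adj}(x,y)}{(x-1)(y^2-x^3)}\,dx\wedge dy,
\]
where $\mathrm{adj}$ is a polynomial of degree $4-3=1$. A linear adjoint is pinned down by two incidence conditions, which I would read off directly from the configuration. First, $\mathrm{adj}$ must vanish at the cusp $(0,0)$, since otherwise the residue along $C$ will fail to be logarithmic at the preimage of the cusp under the normalization~$\gamma$. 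Second, B\'ezout forces $L\cap C$ to consist of three points in $\mathbb{P}^2$, namely the two vertices $(1,\pm 1)$ of $A$ together with the point $[0\!:\!1\!:\!0]$ at infinity; since this latter point is not a vertex of $A$, it is a base point of the boundary configuration and the adjoint must vanish there as well. These two conditions force $\mathrm{adj}$ to be proportional to $x$. The scalar is then fixed by computing the residue along $\{x=1\}$ and matching with the canonical form $\omega_{[-1,1]}=-2\,dy/(y^2-1)$ of the interval $[-1,1]$, which yields the constant $-2$ and the claimed formula~\eqref{eq:A_I}.

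For part (b), logarithmicity reduces via Proposition~\ref{Prop:logform} to verifying that each residue is itself logarithmic. The residue along $\{x=1\}$ is $\omega_{[-1,1]}$ by the computation in part (a) and is evidently logarithmic on $\mathbb{P}^1$. For the cubic, I would apply the Poincar\'e residue formula with $f=y^2-x^3$: using $dx\wedge dy=(dx\wedge df)/(2y)$ on the smooth locus of $C$ gives
\[
\omega_A=\frac{df}{f}\wedge\frac{x\,dx}{y(x-1)},\qquad \mathrm{Res}_C(\omega_A)=\frac{x\,dx}{y(x-1)}\bigg|_{C^{\mathrm{reg}}}.
\]
Pulling back along the normalization $\gamma(t)=(t^2,t^3)$, the $t^3$ factors coming from $x\,dx=2t^3\,dt$ and from $y=t^3$ cancel, leaving a form with simple poles only at $t=\pm 1$ and crucially \emph{no pole at $t=0$}, the preimage of the cusp. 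Up to orientation, this pulled-back form is precisely $\omega_{[-1,1]}$, and because $\gamma$ is birational this identifies $\mathrm{Res}_C(\omega_A)$ with $\gamma_*\omega_{[-1,1]}$ as claimed. Logarithmicity on $\mathbb{P}^1$ is immediate, and Proposition~\ref{Prop:logform} then transfers it back to $\omega_A$.

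The main obstacle is the cuspidal singularity of $C$ lying in the interior of the topological boundary of $A$. As Example~\ref{Example:non-log Pole} illustrates, a rational form with a simple pole along a cuspidal divisor need not be logarithmic: when the residue is pulled back to the normalization, one generically acquires a pole of order higher than one at the preimage of the cusp. Preventing this is precisely what forces the incidence condition $\mathrm{adj}(0,0)=0$ in part (a), that is, the vanishing of the numerator $-2x$ at the cusp. The non-routine step is to verify that this single vanishing genuinely cancels the potential higher-order pole at $t=0$ after pullback; once that cancellation is confirmed, the remainder of the argument is routine bookkeeping with Poincar\'e residues.
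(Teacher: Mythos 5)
Your proof is correct, but part (a) takes a genuinely different route from the paper. You determine the linear adjoint $\mathrm{adj}\propto x$ directly from two incidence conditions on the region $A$ itself: vanishing at the cusp $(0,0)$ (justified by logarithmicity, as you verify in (b)) and vanishing at the residual intersection point $(0\!:\!1\!:\!0)$ of the line and cubic at infinity (justified by the absence of a spurious pole in the residue along $\{x=1\}$). The paper instead subdivides $A$ along $\{y=0\}$ into two halves $S_1,S_2$, writes an ansatz with a degree-$2$ adjoint for each, pins those down by residue matching along $\{y=0\}$ and $\{x=1\}$, sums, and only then imposes the cusp condition to fix the last free coefficient. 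The paper's subdivision has the advantage that in each subpiece the cusp lies at a corner of the boundary rather than in the interior of an arc, so the standard polypol adjoint machinery applies without modification, and the argument also exhibits explicit log forms $\omega_{S_i}$ that are reused in Lemma~\ref{lem:typeIII}. Your direct approach is shorter and matches the heuristic the paper itself advertises in the remark before Proposition~\ref{SubdivisionOfPlanarCells} and again in Example~\ref{Ex:GeometricArgumentforFirstCanForm}; the one thing you should make explicit is that the two incidence conditions are necessary (not merely plausible), which you do indirectly: the cusp condition from the logarithmicity check in (b), the residual-point condition from the residue along $\{x=1\}$ having no pole at infinity. For part (b), your Poincar\'e-residue computation uses $\mathrm{Res}_C(\omega_A)=\frac{x\,dx}{y(x-1)}$ while the paper writes it as $\frac{2\,dy}{3x(x-1)}$; on $C$ these agree since $2y\,dy=3x^2\,dx$, and both pull back to $\frac{2\,dt}{t^2-1}$, so the arguments coincide. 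The sign discrepancy between your $\omega_{[-1,1]}=-2\,dy/(y^2-1)$ and the paper's $\omega_{[-1,1]}=2\,dt/(t^2-1)$ is an orientation convention and has no bearing on the result.
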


\begin{proof}
{\bf(a)} Decompose $A$ into two polypols $S_1$ (with $y>0$) and $S_2$ (with $y<0$), each bounded by $x=1$, $y=0$, and $y^2=x^3$. For each $S_i$, the canonical form has a degree-$2$ numerator; we take the ansatz
\[
\omega_{S_i}=\frac{a_ix^2+b_iy^2+c_ixy+d_ix+e_iy+f_i}{y(x-1)(y^2-x^3)}\,dx\wedge dy.
\]
Matching the boundary residues along $y=0$ fixes, for $S_1$, $a_1=1$ and $d_1=f_1=0$. Matching along $x=1$ further gives $b_1=0$ and $c_1+e_1=1$, hence
\[
\omega_{S_1}=\frac{x^2+c_1xy+e_1y}{y(x-1)(y^2-x^3)}\,dx\wedge dy.
\]
By symmetry across $y\mapsto -y$, the corresponding form on $S_2$ is
\[
\omega_{S_2}=\frac{-x^2+c_2xy+e_2y}{y(x-1)(y^2-x^3)}\,dx\wedge dy,
\qquad c_2+e_2=1.
\]
Summing,
\[
\omega_A=\omega_{S_1}+\omega_{S_2}
=\frac{(c_1+c_2)xy+(e_1+e_2)y}{y(x-1)(y^2-x^3)}\,dx\wedge dy.
\]
Logarithmicity along the cubic is equivalent to the adjoint passing through the cusp $(0\!:\!0\!:\!1)$, which imposes $e_1+e_2=0$ and hence $c_1+c_2=2$. Substituting yields
\[
\omega_A=\frac{-2x}{(x-1)(y^2-x^3)}\,dx\wedge dy.
\]

{\rm {\bf(b)}} Our main tool to show the logarithmic property along $y^2=x^3$ is Proposition~\ref{Prop:logform} together with the explicit resolution of the cuspidal cubic. Due to proposition~\ref{Prop:logform} we only need to check if the residue of $\omega_{A_{I}}$ along the cubic $\{y^2-x^3\} $ is logarithmic, which we check via the explicit resolution $\gamma$.
We first compute the residue of $\omega_A$ along $C$. Rewriting
\[
\omega_{A_I}
=\frac{-2x}{(x-1)(y^2-x^3)}\,dx\wedge dy
=\frac{d(y^2-x^3)}{y^2-x^3}\wedge \eta,
\qquad
\eta=\frac{2x}{3x^2(x-1)}\,dy,
\]
gives 
\[
\operatorname{Res}_{\{y^2-x^3=0\}}(\omega_{A_I})
=\left.\frac{2}{3x(x-1)}\,dy\right|_{y^2=x^3}
=\frac{2}{3\sqrt[3]{y^2}\bigl(\sqrt[3]{y^2}-1\bigr)}\,dy,
\]
which is a rational $1$-form on $C_{\mathrm{reg}}=C\setminus\{(0,0)\}$ with poles at $(1,\pm 1)$.

Pulling back this residue along $\gamma$ amounts to substituting $x=t^2$ and $y=t^3$, yielding
\[
\operatorname{Res}_{\{y^2 - x^3 = 0\}}(\omega) 
\frac{2}{3 t^2(t^2-1)}\,d(t^3)
=\frac{2}{(t+1)(t-1)}\,dt
=\omega_{[-1,1]}.
\]
Thus, the pullback coincides with the canonical form of the interval $[-1,1]$ of length $2$. 

Conversely, pushing forward $\omega_{[-1,1]}$ along $\gamma$ yields
\[
\gamma_*\bigl(\omega_{[-1,1]}\bigr)
=\frac{2\,d\sqrt[3]{y}}{\bigl(\sqrt[3]{y}-1\bigr)\bigl(\sqrt[3]{y}+1\bigr)}
=\frac{2}{3\sqrt[3]{y^2}\bigl(\sqrt[3]{y^2}-1\bigr)}\,dy,
\]
which matches $\operatorname{Res}_{\{y^2 - x^3 = 0\}}(\omega)$. Hence $\omega_{A_I}$ is logarithmic, with residue equal to $\gamma_*(\omega_{[-1,1]})$.

Similarly, the canonical form $\omega_{S_1}$ of the region $S_1$ in the proof of {\bf (a)} 
is logarithmic if and only if $e_1=0$. Indeed,
\[
\operatorname{Res}_{\{y^2-x^3=0\}}(\omega_{S_1})
=\left.\frac{x^2+c_1xy+e_1y}{y(x-1)\,3x^2}\,dy\right|_{\{y^2=x^3\}},
\]
so that
\[
\gamma^*\!\left(\operatorname{Res}_{\{y^2-x^3=0\}}(\omega_{S_1})\right)
=\frac{(t^4+c_1t^5+e_1t^3)\,3t^2}{t^3(t^2-1)\,3t^4}\,dt
=\frac{t+c_1t^2+e_1}{t(t^2-1)}\,dt.
\]
This equals $\frac{1}{t(t-1)}\,dt$ if and only if $e_1=0$. The constraint $c_1+e_1=1$ from {\bf (a)} 
then forces $c_1=1$.

We note that this is equivalent to the adjoint line of $S_1$ passing through the cusp, as claimed in the proof of {\bf (a)}. 
\end{proof}

The semi-algebraic set $A_{II}$ can be viewed as a degeneration of $A_I$, where two intersection points coalesce, producing a non-transversal intersection. Since $A_{II}$ is equivalent to the Vandermonde cell $\Pi_{3,3}$ via a projective automorphism $g\in \PP^3$, Lemma~\ref{lem:typeII} suffices to determine its canonical form geometrically.

\begin{lemma}[Semi-algebraic set of type II]\label{lem:typeII}
The planar semi-algebraic set
\[
A_{II} := \{\, y - \tfrac{3}{2}x + \tfrac{1}{2} \geq 0,\; y^2 \leq x^3,\; y \leq 1 \,\},
\]
bounded by the line $y - \tfrac{3}{2}x + \tfrac{1}{2}=0$ and the cuspidal cubic $y^2 = x^3$, is a positive geometry. Its canonical form is
\[
\omega_{A_{II}} = \frac{3}{4}\,\frac{y-x}{(y-\tfrac{3}{2}x+\tfrac{1}{2})(y^2 - x^3)} \, dx \wedge dy.
\]
\end{lemma}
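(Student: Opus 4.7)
The plan is to follow the same strategy as in Lemma~\ref{lem:typeI}: write $\omega_{A_{II}}=\tfrac{\ell(x,y)}{L(x,y)(y^2-x^3)}\,dx\wedge dy$ with a linear adjoint $\ell$, fix $\ell$ up to a scalar via incidence conditions, and then determine the scalar by matching residues along the boundary. Here $L(x,y)=y-\tfrac{3}{2}x+\tfrac{1}{2}$ and $C=\{y^2-x^3=0\}$.

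First, I would determine $\ell$ up to a scalar. The total boundary degree is $1+3=4$, so by the adjoint degree formula (as used in the remark before Proposition~\ref{SubdivisionOfPlanarCells}) the polynomial $\ell$ is linear. Two incidence conditions then pin it down. It must vanish at the cusp $(0,0)$ so that the residue along $C$, pulled back to the normalization $\gamma\colon\PP^1\to C$, $t\mapsto(t^2,t^3)$, remains logarithmic at $t=0$; without this, the pullback would acquire a pole of order two, just as in Example~\ref{Example:non-log Pole}. It must also vanish at the tangency point $(1,1)$, because $L$ and $C$ meet there with intersection multiplicity two, so an unmodified numerator would produce a double pole. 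A linear form through $(0,0)$ and $(1,1)$ is unique up to a scalar, yielding $\ell=\alpha(y-x)$.

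Next, I would fix the scalar $\alpha$ by a single residue calculation. Substituting $y=\tfrac{3}{2}x-\tfrac{1}{2}$ into $y^2-x^3$ produces the factorization $-\tfrac{1}{4}(x-1)^2(4x-1)$, and the factor $(y-x)|_L=\tfrac{1}{2}(x-1)$ cancels one $(x-1)$ in the denominator. The residue on $L$ then reduces to a rational $1$-form on the line with simple poles at the vertex parameters $x=\tfrac{1}{4}$ and $x=1$, which I would match against the canonical form of the boundary segment $[1/4,1]\subset L$ in order to pin down $\alpha$.

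Finally, I would verify the recursive structure required by Definition~\ref{DefPositiveGeometry}. Logarithmicity along $L$ is immediate from the simple pole on a smooth curve; logarithmicity along the singular curve $C$ follows from Proposition~\ref{Prop:logform} after computing $\gamma^{*}\mathrm{Res}_C\,\omega_{A_{II}}$. Using $L\circ\gamma=(t-1)^2(t+\tfrac{1}{2})$ and $(y-x)\circ\gamma=t^2(t-1)$, one factor of $(t-1)$ cancels in the pullback, leaving simple poles only at $t=1$ and $t=-\tfrac{1}{2}$ and crucially no pole at $t=0$, as required. Matching this against the canonical form of the preimage interval $[-\tfrac{1}{2},1]\subset\PP^1$ provides a consistency check on $\alpha$, and the iterated residues at the three vertices $(0,0)$, $(\tfrac{1}{4},-\tfrac{1}{8})$, $(1,1)$ should each evaluate to $\pm 1$. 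The principal obstacle is accommodating both the cusp at $(0,0)$ and the tangential intersection at $(1,1)$ with a single linear adjoint, while verifying logarithmicity across the cusp on the normalization rather than directly on $C$.
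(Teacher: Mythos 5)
Your proposal follows essentially the same method as the paper: write $\omega_{A_{II}}$ with a linear adjoint numerator over the product $L\cdot(y^2-x^3)$, pin it down by residue matching along the two boundary curves, and verify logarithmicity across the cusp by pulling back along the normalization $\gamma$ and invoking Proposition~\ref{Prop:logform}. Your reorganization --- first fixing $\ell$ up to scalar by the two incidence conditions (vanishing at the cusp $(0,0)$, vanishing at the tangency $(1,1)$) and then computing the scalar by a single residue match --- is a cleaner conceptual packaging of the same constraints the paper extracts from its two residue computations, and it agrees with the geometric description in Example~\ref{Ex:GeometricArgumentforFirstCanForm}.

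One thing worth flagging, since you defer the determination of $\alpha$: if you carry out the match along $L$ as you describe, you get
\[
\operatorname{Res}_L\omega_{A_{II}}
= \frac{\alpha\,(y-x)\big|_L}{(y^2-x^3)\big|_L}\,dx
= \frac{\alpha\cdot\tfrac12(x-1)}{(x-1)^2\bigl(x-\tfrac14\bigr)}\,dx
= \frac{\alpha/2}{(x-1)\bigl(x-\tfrac14\bigr)}\,dx,
\]
and matching this to $\omega_{[1/4,1]} = \tfrac{3/4}{(x-1/4)(x-1)}\,dx$ forces $\alpha = \tfrac32$. The cubic side gives the same: with $(y-x)\circ\gamma = t^2(t-1)$ and $L\circ\gamma = (t-1)^2(t+\tfrac12)$, one finds $\gamma^*\operatorname{Res}_C\omega_{A_{II}} = \pm\tfrac{\alpha}{(t-1)(t+1/2)}\,dt$, which equals $\omega_{[-1/2,1]}$ exactly when $\alpha=\tfrac32$. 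This is also what the paper's own intermediate constraints $a+\tfrac32 b=\tfrac34$, $c-\tfrac12 b=-\tfrac34$, $c=0$ produce ($b=\tfrac32$, $a=-\tfrac32$), so the coefficient $\tfrac34$ appearing in the stated canonical form and in the paper's final line appears to be a factor-of-two slip; your method, if completed, would catch it. The rest of your reasoning --- the necessity of $\ell(0,0)=0$ to avoid a double pole of the pullback at $t=0$, the necessity of $\ell(1,1)=0$ to cancel the double zero of $(y^2-x^3)|_L$ at $x=1$, and the closing residue check at the three vertices --- is all correct and is exactly what is needed.
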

\begin{proof}
The boundary of $A_{II}$ has total degree $4$, so the adjoint curve has degree $1$. We thus have 
\[
\omega_{A_{II}}=\frac{ax+by+c}{\bigl(y-\tfrac{3}{2}x+\tfrac{1}{2}\bigr)\,(y^2-x^3)}\,dx\wedge dy,
\]
for some real parameters $a,b,c$.

To determine these coefficients, we impose residue conditions along the boundary components. Along the line $y-\tfrac{3}{2}x+\tfrac{1}{2}=0$, the residue must match the canonical form of a line segment with endpoints $y=-\tfrac{1}{8}$ and $y=1$, while along the cubic $y^2=x^3$ it must match the pushforward of the canonical form of the interval $[-1,1]$ under the normalization $\gamma$.
For the residue along the line $\{y-\tfrac{3}{2}x+\tfrac{1}{2}=0\}$, we compute
\[
\omega_{A_{II}}
=\frac{ax+by+c}{\bigl(y-\tfrac{3}{2}x+\tfrac{1}{2}\bigr)\,(y^2-x^3)}\,dx\wedge dy
=\frac{-\tfrac{3}{2}\,dx\wedge dy}{\,y-\tfrac{3}{2}x+\tfrac{1}{2}\,}\wedge
\left(-\,\frac{ax+by+c}{-\tfrac{3}{2}\,(y^2-x^3)}\,dx\right),
\]
so
\[
\operatorname{Res}_{\{\,y-\tfrac{3}{2}x+\tfrac{1}{2}=0\,\}}(\omega_{A_{II}})
=-\left.\frac{ax+by+c}{-\tfrac{3}{2}\,(y^2-x^3)}\,dx\right|_{\{\,y-\tfrac{3}{2}x+\tfrac{1}{2}=0\,\}}
=-\frac{ax+b(\tfrac{3}{2}x-\tfrac{1}{2})+c}{\bigl(\tfrac{3}{2}x-\tfrac{1}{2}\bigr)^2-x^3}\,dx
=\frac{(a+\tfrac{3}{2}b)x-\tfrac{1}{2}b+c}{-(x-1)^2\,(x-\tfrac{1}{4})}\,dx.
\]
Comparing to the canonical form $\textstyle{\omega_{[\tfrac{1}{4},1]}=\dfrac{\tfrac{3}{4}}{(t-\tfrac{1}{4})(t-1)}\,dt}$ of an interval of length $\tfrac{3}{4}$, we must have: 
\[
a+\tfrac{3}{2}b=\tfrac{3}{4},\qquad c-\tfrac{1}{2}b=-\tfrac{3}{4}.
\]
With this, the residue along the cubic is
\[
\operatorname{Res}_{\{\,y^2-x^3=0\,\}}(\omega_{A_{II}})
=\left.\frac{(-3c-\tfrac{3}{2})x+(2c+\tfrac{3}{2})y+c}{3x^2\bigl(y-\tfrac{3}{2}x+\tfrac{1}{2}\bigr)}\,dy\right|_{\{\,y^2-x^3=0\,\}},
\]
which pulls back to
\[
\gamma^*\!\left(\operatorname{Res}_{\{\,y^2-x^3=0\,\}}(\omega_{A_{II}})\right)
=\frac{\tfrac{3}{2}t^2+2c\,(t-1)\!\left(t+\tfrac{1}{2}\right)}{t^2(t-1)\!\left(t+\tfrac{1}{2}\right)}\,dt.
\]
Thus $\operatorname{Res}_{\{\,y^2-x^3=0\,\}}(\omega_{A_{II}})$ is the canonical form that pulls back to the canonical form of the segment $\omega_{[-\tfrac{1}{2},1]}$ if and only if $c=0$.
These conditions uniquely determine $a,b,c$ as
\[
a=-\tfrac{3}{4},\qquad b=\tfrac{3}{4},\qquad c=0,
\]
and hence give the stated expression for $\omega_{A_{II}}$.
\end{proof}

To compute canonical forms for planar Vandermonde cells, we use the subdivision in Proposition~\ref{SubdivisionOfPlanarCells}, which naturally produces regions equivalent to $A_{III}$.

\begin{lemma}[Semi-algebraic set $A_{III}$]\label{lem:typeIII}
The canonical form of the bounded semi-algebraic set enclosed by the cuspidal cubic $\{y^2 = x^3\}$ and the line $\{x=y\}$ is
\[
\omega_{A_{III}} = \frac{y}{(y-x)(y^2 - x^3)} \, dx \wedge dy.
\]
\end{lemma}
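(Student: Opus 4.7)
The approach mirrors Lemmas~\ref{lem:typeI} and~\ref{lem:typeII}: first use the boundary degrees to pin down the shape of the canonical form, then fix its coefficients by imposing residue and logarithmicity conditions on each boundary component. Since the total boundary degree is $1 + 3 = 4$, the adjoint polynomial is linear, so I begin with the ansatz
\[
\omega_{A_{III}} = \frac{ax + by + c}{(y-x)(y^2-x^3)}\,dx\wedge dy
\]
for unknowns $a,b,c\in\mathbb{R}$.

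First, I would compute the residue along the line $y=x$ using the substitution $u=y-x$, which gives $dx\wedge dy = -du\wedge dx$ and therefore $\operatorname{Res}_{y-x=0}(\omega_{A_{III}}) = \frac{(a+b)x+c}{x^2(x-1)}\,dx$. Matching this to the canonical form $\omega_{[0,1]} = \frac{dx}{x(x-1)}$ of the line segment between the cusp $(0,0)$ and the vertex $(1,1)$ yields the two equations $a+b=1$ and $c=0$.

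Next, I would enforce logarithmicity along the cubic via Proposition~\ref{Prop:logform}, using the standard resolution $\gamma(t) = (t^2,t^3)$. Writing $dx\wedge dy = -\tfrac{1}{2y}\,df\wedge dx$ with $f=y^2-x^3$, a direct computation gives
\[
\gamma^*\operatorname{Res}_{y^2-x^3=0}(\omega_{A_{III}}) = -\frac{at^2+bt^3+c}{t^4(t-1)}\,dt.
\]
Here the argument diverges from Lemma~\ref{lem:typeII}: because the non-transversal intersection of the line and cubic coincides with the cusp, the parameter value $t=0$ is now an endpoint of the boundary segment on the cubic rather than an interior point. Requiring $\gamma^*\operatorname{Res}$ to have only a simple pole at $t=0$ forces the numerator to vanish to order $3$ there, giving $c=0$ and $a=0$. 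Combining this with the line constraint yields $b=1$, and substitution produces the claimed form.

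The main technical obstacle is the careful pole-order bookkeeping at the cusp $t=0$, where the non-immersive parametrization $\gamma$ interacts with the non-transversal tangency to produce stronger vanishing conditions on $(a,b,c)$ than those arising in Lemma~\ref{lem:typeII}. Once these conditions are in place, the residue on the cubic reduces to $-\tfrac{dt}{t(t-1)}$, which matches the canonical form of the segment $[0,1]$ up to the orientation-induced sign, confirming that $A_{III}$ carries the structure of a positive geometry in the sense of Definition~\ref{DefPositiveGeometry}.
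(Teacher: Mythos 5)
Your proof is correct, but it takes a genuinely different route from the one the paper ultimately presents. You apply the same direct ansatz-and-residue method used in Lemmas~\ref{lem:typeI} and~\ref{lem:typeII}: a linear adjoint $ax+by+c$ (since the boundary has total degree $4$), residue matching along the line $y=x$ to the canonical form of the segment (giving $a+b=1$, $c=0$), and then logarithmicity along the cubic via the normalization $\gamma(t)=(t^2,t^3)$, where the pullback $-\frac{at^2+bt^3+c}{t^4(t-1)}\,dt$ must have at most a simple pole at $t=0$, forcing $c=0$ and $a=0$. Your observation that the cusp being an \emph{endpoint} of the cubic boundary segment (rather than an interior point, as in type~II) yields strictly stronger vanishing conditions is the right way to organize the computation, and the a~posteriori check that the cubic residue becomes $-\frac{dt}{t(t-1)}=\pm\omega_{[0,1]}$ closes the argument. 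The paper, by contrast, exploits additivity: it observes that $S_1$ from Lemma~\ref{lem:typeI} together with $A_{III}$ partitions the triangle $\triangle$ with vertices $(0,0)$, $(1,0)$, $(1,1)$, and computes $\omega_{A_{III}}=\omega_\triangle-\omega_{S_1}$ by a single rational-function simplification. That route is shorter because it reuses a form already in hand, while yours is self-contained and parallel in structure to the other two type lemmas; the paper even flags your approach as an available alternative (``We can obtain $\omega_{A_{III}}$ directly\ldots'') before opting for the subtraction argument. The sign discrepancy between your residues and $\omega_{[0,1]}$ is an orientation convention and is harmless.
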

\begin{proof}
We can obtain $\omega_{A_{III}}$ directly (as for $A_I$ and $A_{II}$), or note that $S_1$ from Lemma~\ref{lem:typeI}, together with $A_{III}$, partitions the triangle $\triangle$ with vertices $(0,0)$, $(1,0)$, and $(1,1)$. Hence the \emph{outer triangulation} gives $\omega_{A_{III}}+\omega_{S_1}=\omega_{\triangle}$. Since
\[
\omega_{\triangle}=\frac{1}{y(x-1)(y-x-1)}\,dx\wedge dy,
\]
substituting this together with $\omega_{S_1}$ from the proof of Lemma~\ref{lem:typeI} yields 
\[
\omega_{A_{III}}
=\omega_{\triangle}-\omega_{S_1}
=\frac{1}{y(x-1)(y-x)}\,dx\wedge dy
-\frac{x^2+xy}{y(x-1)(y^2-x^3)}\,dx\wedge dy
=\frac{y}{(y-x)(y^2-x^3)}\,dx\wedge dy.
\] 
\end{proof}

Having organized the intersection data for all pieces in the subdivision of Proposition~\ref{SubdivisionOfPlanarCells}, we now determine a canonical form for $\Pi_{n,3}$. 
Let $c_k\in\mathbb{R}^2$ denote the cusp of the boundary $\{b_k=0\}$, and for points $A,B\in\mathbb{R}^2$ let $l(A,B)$ be the line through $A$ and $B$. 
We also write $\tilde{c}_n$ for the unique line through $c_n$ that meets $\{b_n=0\}$ with intersection multiplicity $3$.

\begin{theorem}\label{Thm:PlanarCellsarePositiveGeometries}
Planar Vandermonde cells $\Pi_{n,3}$ are positive geometries. 
The canonical form $\Omega_{\Pi_{n,3}}$ is obtained inductively from $\Omega_{\Pi_{n-1,3}}$ by subdividing the additional piece $\Pi_{n,3}\setminus \Pi_{n-1,3}$ with the line $l(c_{n-1},(1,1))$ and adding the two canonical forms
\[
\omega^n_{\mathrm{upper}}=\frac{l(c_n,(1,1))\,dx\wedge dy}{\,l(c_{n-1},(1,1))\,b_n}\quad\text{and}\quad
\omega^n_{\mathrm{lower}}=\frac{\tilde{c}_n\,dx\wedge dy}{\,l(c_{n-1},(1,1))\,b_{n-1}}.
\]
In total,
\[
\Omega_{\Pi_{n,3}}=\Omega_{\Pi_{n-1,3}}+\omega^n_{\mathrm{lower}}+\omega^n_{\mathrm{upper}}.
\]
\end{theorem}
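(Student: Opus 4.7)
The plan is to argue by induction on $n$, using the additivity of canonical forms (Proposition~\ref{AdditivityofCanForms}) together with the inductive subdivision constructed in Proposition~\ref{SubdivisionOfPlanarCells}. The base case $n=3$ holds because $\Pi_{3,3}$ is projectively equivalent to the type-II region $A_{II}$, whose canonical form is determined in Lemma~\ref{lem:typeII}; we transport this form by pullback along an element $g\in\mathrm{PGL}(3)$ to obtain $\Omega_{\Pi_{3,3}}$ and conclude that $\Pi_{3,3}$ is a positive geometry in the sense of Definition~\ref{DefPositiveGeometry}.

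For the inductive step, I would fix $n\ge 4$ and assume that $\Pi_{n-1,3}$ is a positive geometry with canonical form $\Omega_{\Pi_{n-1,3}}$. By the proof of Proposition~\ref{SubdivisionOfPlanarCells}, the annular region $\Pi_{n,3}\setminus\Pi_{n-1,3}$ is bounded by arcs of the cubics $b_{n-1}$ and $b_n$, meeting at the two vertices $(1,1)$ and $c_{n-1}$. Cutting it by the line $l(c_{n-1},(1,1))$ produces an upper piece (bounded by this line and an arc of $b_n$) and a lower piece (bounded by this line and an arc of $b_{n-1}$). Proposition~\ref{Prop:SemiAlgTypes} identifies these two pieces, after a projective change of coordinates, with $A_{II}$ and $A_{III}$; these are positive geometries by Lemmas~\ref{lem:typeII} and~\ref{lem:typeIII}.

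The corresponding canonical forms $\omega^n_{\text{upper}}$ and $\omega^n_{\text{lower}}$ are then computed by the ansatz method used in the previous lemmas: since each boundary has total degree $4$, the adjoint is a line in $\PP^2$, and such a line is pinned down by two incidence conditions. Logarithmicity along the cuspidal cubic forces the adjoint to pass through the cusp (cf.\ Lemma~\ref{lem:typeI}(b)), and matching the residue along $l(c_{n-1},(1,1))$ with the canonical form of the appropriate intersection segment supplies the second condition. For the upper piece, these conditions pin down the adjoint as a scalar multiple of $l(c_n,(1,1))$; for the lower piece, they pin it down as $\tilde{c}_n$, yielding exactly the formulas for $\omega^n_{\text{upper}}$ and $\omega^n_{\text{lower}}$ in the statement.

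It then remains to invoke Proposition~\ref{AdditivityofCanForms} to obtain $\Omega_{\Pi_{n,3}}=\Omega_{\Pi_{n-1,3}}+\omega^n_{\text{upper}}+\omega^n_{\text{lower}}$, which requires the residues of $\omega^n_{\text{upper}}$ and $\omega^n_{\text{lower}}$ along the auxiliary line $l(c_{n-1},(1,1))$ to cancel with opposite induced orientations, so that the result has logarithmic poles exactly along $\partial \Pi_{n,3}$. The hardest part of the argument is verifying this cancellation together with the recursive residue structure demanded by Definition~\ref{DefPositiveGeometry}: namely, that the residue of $\Omega_{\Pi_{n,3}}$ along each cubic $b_k$ reproduces, via pushforward along the normalization $\PP^1\to\{b_k=0\}$ as in Lemma~\ref{lem:typeI}(b), the canonical form of the corresponding one-dimensional boundary segment. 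This will reduce to careful bookkeeping of the intersection multiplicities of $l(c_{n-1},(1,1))$ with $b_{n-1}$ and $b_n$ at the shared vertices $(1,1)$ and $c_{n-1}$, together with the resolution of the cusp already used in Lemma~\ref{lem:typeI}(b).
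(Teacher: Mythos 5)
Your proposal follows the paper's argument step by step: base case $\Pi_{3,3}\cong A_{II}$ via Lemma~\ref{lem:typeII}, inductive step by subdividing $\Pi_{n,3}\setminus\Pi_{n-1,3}$ with $l(c_{n-1},(1,1))$ (Propositions~\ref{SubdivisionOfPlanarCells} and~\ref{Prop:SemiAlgTypes}) into a type-$A_{II}$ upper piece and a type-$A_{III}$ lower piece, adjoints pinned down by incidences (Lemmas~\ref{lem:typeII}, \ref{lem:typeIII}, \ref{lem:typeI}(b)), and the total via additivity. Your observation that the residues along the spurious line $l(c_{n-1},(1,1))$ must cancel is a useful remark the paper leaves implicit in its appeal to Proposition~\ref{AdditivityofCanForms}.

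There is, however, one place where your sketch asserts an outcome that the advertised ansatz would not actually produce. The lower piece is bounded by $l(c_{n-1},(1,1))$ and $b_{n-1}$, with the multiplicity-two intersection at the cusp $c_{n-1}$ of $b_{n-1}$. By Lemma~\ref{lem:typeIII} and the triple-contact criterion in Lemma~\ref{lem:typeI}(b), the adjoint of a type-$A_{III}$ region is the unique line meeting its cubic with intersection multiplicity $3$ at the cusp, i.e.\ the tangent to $b_{n-1}$ at $c_{n-1}$, which is $\tilde{c}_{n-1}$ --- not $\tilde{c}_n$. (Note $\tilde{c}_n$ is tangent to $b_n$ at $c_n$, a curve and a point that do not bound the lower piece, and in general $\tilde{c}_n$ does not even pass through $c_{n-1}$.) This appears to be a one-off indexing slip in the theorem statement, reproduced in the $\Pi_{4,3}$ example with $\tilde c_4$ in place of $\tilde c_3$; you inherited it when you wrote that the conditions ``pin it down as $\tilde{c}_n$.'' If you carry out your ansatz, you will land on
$\omega^n_{\mathrm{lower}} = \tilde{c}_{n-1}\,dx\wedge dy\,/\,\bigl(l(c_{n-1},(1,1))\,b_{n-1}\bigr)$, and you should flag the discrepancy rather than assert agreement with the printed formula.
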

\begin{proof} By Lemmas~\ref{lem:typeII} and~\ref{lem:typeIII}, one has $\Omega_{\Pi_{3,3}}=\frac{l(c_3,(1,1))\,dx\wedge dy}{b_2 b_3}$. In coordinates, the cusp is $c_n=\bigl(\tfrac{1}{n},\tfrac{1}{n^2}\bigr)$, and the line $l(c_n,(1,1))$ is cut out by $-ny+(n+1)x-1$. From Propositions~\ref{SubdivisionOfPlanarCells} and~\ref{Prop:SemiAlgTypes} it follows that increasing $n$ adds exactly one new region of type $A_{II}$ and one of type $A_{III}$ to the previous configuration, starting with a single region of type $A_{II}$ for $\Pi_{3,3}$. These two new regions are obtained by subdividing $\Pi_{n,3}\setminus \Pi_{n-1,3}$ with the line $l(c_{n-1},(1,1))$. Consequently, the canonical form of $\Pi_{n,3}$ is obtained from that of $\Pi_{n-1,3}$ by adding $\omega^n_{\mathrm{upper}}$ and $\omega^n_{\mathrm{lower}}$, as claimed. The presentations of $\omega_{\mathrm{lower}}$ and $\omega_{\mathrm{upper}}$ are easily obtained from Lemma~\ref{lem:typeII} and Lemma~\ref{lem:typeIII}, as explained therein, by arguing via incidences of the linear adjoint curve. 
\end{proof}

\begin{example}\label{Ex:GeometricArgumentforFirstCanForm}
The Vandermonde cell $\Pi_{3,3}$ is bounded by the cuspidal cubic $\{b_3 = 0\}$ and the line $\{b_2 = 0\}$. 
Their intersection at $(1,1)$ is non-transversal. 
By Proposition~\ref{Prop:SemiAlgTypes}, the cell $\Pi_{3,3}$ is projectively equivalent to the region $A_{II}$ described in Lemma~\ref{lem:typeII}. 
Lemma~\ref{lem:typeII} further shows that the canonical form of such a semialgebraic region has an adjoint given by the line passing through the cusp of the cubic and the non-transversal intersection point. 
Accordingly, the adjoint of the first nontrivial planar Vandermonde cell $\Pi_{3,3}$ is the line passing through the cusp $(\tfrac{1}{3}, \tfrac{1}{9})$ of $b_3$ (as defined in Lemma~\ref{Equations planar boundaries}) and the point $(1,1)$, where $\{b_3 = 0\}$ and $\{b_2 = 0\}$ intersect non-transversally. 
See Figure~\ref{fig:planar} for an illustration.
\end{example}

\begin{example}
For $\Pi_{4,3}$, the canonical form is
\begin{align*}
\Omega_{\Pi_{4,3}}
&= \frac{l(c_3,(1,1))}{b_2 b_3}
 + \frac{l(c_4,(1,1))}{l(c_3,(1,1))\,b_4}
 + \frac{\tilde{c}_4}{l(c_3,(1,1))\,b_3}\,dx\wedge dy \\
&= \frac{l(c_3,(1,1))^2 b_4 + l(c_4,(1,1))\,b_2 b_3 + b_2 b_4 \tilde{c}_4}{b_2 b_3 b_4\,l(c_3,(1,1))}\,dx\wedge dy.
\end{align*}
The apparent pole along the line $l(c_3,(1,1))$ is \emph{spurious}; it cancels, although this is not immediately obvious from this presentation.
\end{example}

While the planar case admits explicit canonical forms via subdivision, the situation in higher dimensions is more subtle.

\subsection{Higher-dimensional Vandermonde cells}

\begin{remark}
In Theorem~\ref{Thm:PlanarCellsarePositiveGeometries}, canonical forms of planar Vandermonde cells were computed by subdividing them into full-dimensional pieces whose total boundary degree was strictly smaller than that of the original cell. In contrast, already in the three-dimensional case $\Pi_{4,4}$, the boundary consists of a quadric and a degree~6 hypersurface intersecting in a degree~8 curve. Any hypersurface containing this curve must have degree at least~8. Thus, unlike in the planar case, the total boundary degree does not decrease under analogous subdivisions.
\end{remark}

This illustrates one of the main challenges in constructing canonical forms for higher-dimensional semi-algebraic sets. Existing examples of non-polytopal, non-planar positive geometries in the literature focus on carefully structured cases. For instance, in \cite[§5.6]{brown2025positive} both irreducible boundary components are of low degree with controlled intersections, while in \cite[§5.8]{brown2025positive} the geometry consists of a single hypersurface of degree $n+1$ in $\PP^n$. By contrast, the Vandermonde setting leads to significantly more intricate boundary configurations. Boundary equations can be obtained via elimination techniques using the parametrization from Lemma~\ref{lem:Parametrisation for all d}. See the implementation in \texttt{Macaulay2} at \url{https://github.com/SebSeemann/Boundaries-of-Vandermonde-cells}.

\section{Positive geometries in the sense of Brown–Dúpont}\label{Section:BrownDupont-setup}

The Brown–Dúpont framework~\cite{brown2025positive} reformulates the concept of positive geometries in cohomological terms, replacing recursive definitions by a linear map between relative homology and spaces of logarithmic forms. In this section, we review the necessary Hodge-theoretic background, defines the key invariants such as genus and combinatorial rank, and apply these concepts to show that Vandermonde cells form genus-zero pairs, ensuring that their canonical forms exist in the Brown–Dúpont sense.

\smallskip
For background on homology and cohomology, we refer to standard references such as \cite{hatcher2002algebraic}. Unless stated otherwise, all (co)homology groups are taken with coefficients in $\mathbb{Q}$. Throughout this section, $Y$ denotes a subvariety of the irreducible complex variety $X$, containing the singular locus of $X$.

In \cite{brown2025positive}, Brown and Dúpont introduced a non-recursive formulation of positive geometries and canonical forms using mixed Hodge theory. This framework endows the cohomology of a complex variety with two compatible filtrations whose interaction controls the existence and uniqueness of canonical forms. Uniqueness, in particular, occurs only when all Hodge numbers $h^{p,0}(H^n)$ with $p>0$ vanish.

\subsection{Hodge-theoretic setup}

Before defining canonical forms in this general setting, we recall the classical notions of pure and mixed Hodge structures, originally due to Hodge and Deligne. 

\begin{definition}[Pure Hodge structure]
A \emph{pure Hodge structure} of weight $w$ on a finite-dimensional $\mathbb{Q}$-vector space $H$ is a decomposition
\[
H_{\mathbb{C}} = \bigoplus_{p+q=w} H^{p,q}
\]
of its complexification satisfying $H^{p,q} = \overline{H^{q,p}}$ for all $p,q$.
\end{definition}

The cohomology of a smooth complex variety naturally carries a pure Hodge structure, where $H^{p,q}$ consists of smooth differential forms of type $(p,q)$.  
Equivalently, one can describe a pure Hodge structure by a decreasing \emph{Hodge filtration} $F$ on $H_{\mathbb{C}}$, defined by
\[
F^i(H_{\mathbb{C}}) := \bigoplus_{p \ge i} H^{p,q}.
\]
This filtration determines the decomposition via
\[
H^{p,q} = F^p(H_{\mathbb{C}}) \cap \overline{F^{q+1}(H_{\mathbb{C}})}.
\]
For singular varieties, the cohomology no longer carries a pure Hodge structure, but it admits an increasing \emph{weight filtration} whose graded pieces are pure Hodge structures.

\begin{definition}[Mixed Hodge structure]
A \emph{mixed Hodge structure} on a finite-dimensional $\mathbb{Q}$-vector space $H$ consists of
\begin{itemize}
    \item an increasing \emph{weight filtration} $W_\bullet$ on $H$, and
    \item a decreasing \emph{Hodge filtration} $F^\bullet$ on its complexification $H_{\mathbb{C}}$,
\end{itemize}
such that, for each weight $w$, the induced filtration $F$ defines a pure Hodge structure of weight $w$ on the graded piece
\[
\operatorname{gr}^W_w H := W_w H / W_{w-1} H.
\]
\end{definition}

If $H$ carries a mixed Hodge structure, we denote by $H^{p,q}$ the $(p,q)$-component of the pure Hodge structure on $\operatorname{gr}^W_{p+q} H$, and by $h^{p,q} = \dim H^{p,q}$ its Hodge numbers.  
A fundamental theorem of Deligne states that the cohomology of every complex algebraic variety is naturally endowed with a canonical mixed Hodge structure.

The Hodge decomposition of cohomology extends to homology via graded Poincaré duality. For $H^n(X,Y)$, the only non-vanishing Hodge numbers are those $h^{p,q}$ with $0 \le p,q \le n$. Dually, the Hodge numbers of $H_n(X,Y)$ are nonzero only for $-n \le p,q \le 0$. Among these, the components $h^{p,0}$ with $p \le 0$ are of particular interest for positive geometries, as they correspond to the dimensions of the spaces of holomorphic $p$-forms.

\subsection{Genus, combinatorial rank, and the iterated residue map}

We first review the definitions of the \emph{genus}, the \emph{combinatorial rank}, and the definition of the canoncial form map ${\rm can} $ of a pair $(X,Y)$ as introduced in \cite{brown2025positive}. For a positive geometry $(X, X_{\geq 0})$, the canonical form $\omega_X$ is holomorphic on $X \setminus Y$. As an $n$-form on an $n$-dimensional variety, $\omega_X$ is closed and therefore defines a class in de~Rham cohomology. This yields a natural map
\[
\Omega^n_{\log}(X \setminus Y) \;\longrightarrow\; H^n(X \setminus Y, \mathbb{C}),
\]
whose image lies in the top step of the Hodge filtration, $F^n\bigl(H^n(X \setminus Y, \mathbb{C})\bigr)$.

In this setting, the construction of canonical forms is more subtle. They are associated to relative homology classes $\sigma \in H_n(X,Y)$ via a morphism
\[
{\rm can}: H_n(X,Y) \;\longrightarrow\; \Omega^n_{\log}(X \setminus Y),
\]
which we define below after introducing the notions of \emph{genus} and \emph{combinatorial rank} for the pair $(X,Y)$.

\begin{definition}
The \emph{genus} of a pair $(X,Y)$ is
$g(X,Y) := \sum_{p>0} h^{p,0}\bigl(H^n(X,Y)\bigr)$,
and its \emph{combinatorial rank} is
$cr(X,Y) := h^{0,0}\bigl(H^n(X,Y)\bigr)$.
\end{definition}
For the mixed Hodge structure on $H_n(X,Y)$ we have
$F^0\bigl(H_n(X,Y), \mathbb{C}\bigr) = \bigoplus_{p \ge 0} H^{p,0}$.
Hence there is a natural projection
\[
F^0\bigl(H_n(X,Y), \mathbb{C}\bigr)
\;\longrightarrow\;
H^{0,0}\bigl(H_n(X,Y), \mathbb{C}\bigr)
= \operatorname{gr}^W_0\bigl(H_n(X,Y), \mathbb{C}\bigr)
= W_0\bigl(H_n(X,Y), \mathbb{C}\bigr) / W_{-1}\bigl(H_n(X,Y), \mathbb{C}\bigr),
\]
whose kernel has dimension $g(X,Y)$ and image has dimension $cr(X,Y)$.  

\medskip
We now define the \emph{iterated residue map} as a composition of natural morphisms in Hodge theory.

\begin{definition}[Iterated residue map]
The \emph{iterated residue map} is the composition
\[
\Omega^n_{\log}(X \setminus Y)
\;\cong\;
F^n\bigl(H^n(X \setminus Y), \mathbb{C}\bigr)
\xrightarrow{\ \mathrm{PD}\ }
F^0\bigl(H_n(X \setminus Y), \mathbb{C}\bigr)
\twoheadrightarrow
\operatorname{gr}^W_0\bigl(H_n(X,Y), \mathbb{C}\bigr),
\]
where the first isomorphism identifies logarithmic forms with the top step of the Hodge filtration, the second arrow is Poincaré duality, and the third is the natural projection to the weight-zero quotient.  
We denote this map by
\[
{\rm R}: \Omega^n_{\log}(X \setminus Y)
\;\longrightarrow\;
\operatorname{gr}^W_0\bigl(H_n(X,Y), \mathbb{C}\bigr).
\]
\end{definition}

The kernel of ${\rm R}$ has dimension $g(X,Y)$. Hence, the iterated residue map can be inverted only when the genus vanishes, i.e.\ when $g(X,Y)=0$.

\begin{definition}[Canonical form map]
For a genus-zero pair $(X,Y)$, the \emph{canonical form map} is defined by composing the natural projection onto the weight-zero quotient
\[
H_n(X,Y) \;\longrightarrow\; \operatorname{gr}^W_0\bigl(H_n(X,Y)_{\CC}\bigr )
\]
with the inverse of the iterated residue map,
\[
{\rm R}^{-1}: \operatorname{gr}^W_0\bigl(H_n(X,Y)_ \mathbb{C}\bigr)
\;\longrightarrow\;
\Omega^n_{\log}(X \setminus Y).
\]
The resulting morphism
\[
{\rm can}: H_n(X,Y) \;\longrightarrow\; \Omega^n_{\log}(X \setminus Y)
\]
assigns to each relative homology class $\sigma$ the logarithmic form ${\rm can}(\sigma)$.
\end{definition}

\subsection{Canonical form map and its properties}

By a \emph{modification} of a pair $(X,Y)$ of complex varieties we mean a morphism of pairs $f:(\tilde{X},\tilde{Y}) \to (X,Y)$ such that $f:\tilde{X}\to X$ is proper, $\tilde{Y}=f^{-1}(Y)$, and the restriction $f|_{\tilde{X}\setminus\tilde{Y}}:\tilde{X}\setminus\tilde{Y}\to X\setminus Y$ is an isomorphism. Standard examples of modifications are blowups, in particular resolutions of singularities.  

With this definition, properties such as additivity under subdivisions, invariance under modification, and functoriality of pullbacks and pushforwards become immediate consequences. By contrast, these properties are significantly more subtle for canonical forms in the sense of Definition~\ref{DefPositiveGeometry} and in \cite{arkani2017positive}.  

\begin{theorem}[\cite{brown2025positive}] 
For a genus-zero pair $(X,Y)$, the canonical form map ${\rm can}$ is linear, invariant under modifications, functorial with respect to pullbacks and pushforwards, and multiplicative under products of pairs. Moreover, taking the relative boundary of a homology class $\sigma$ corresponds to taking the residue of the logarithmic form ${\rm can}(\sigma)$. 
\end{theorem}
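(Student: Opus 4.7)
The plan is to verify each asserted property by unpacking the three-step definition of $\mathrm{can}$ as
\[
H_n(X,Y) \twoheadrightarrow \operatorname{gr}^W_0 H_n(X,Y)_{\CC} \xrightarrow{\ {\rm R}^{-1}\ } F^n H^n(X\setminus Y)_{\CC} \cong \Omega^n_{\log}(X\setminus Y),
\]
so that each property reduces to a statement about natural maps already present in the theory of mixed Hodge structures. Linearity is immediate since all three constituents are $\CC$-linear. For invariance under a modification $f:(\tilde X,\tilde Y)\to(X,Y)$, I would use that $f$ restricts to an isomorphism $\tilde X\setminus\tilde Y \xrightarrow{\sim} X\setminus Y$, hence induces an isomorphism $\Omega^n_{\log}(\tilde X\setminus\tilde Y)\cong \Omega^n_{\log}(X\setminus Y)$, while the long exact sequence of the triple $(X,Y,\tilde Y)$ (or rather the Mayer--Vietoris comparison associated with a modification) shows that the difference between $H_n(\tilde X,\tilde Y)$ and $H_n(X,Y)$ lies in strictly negative weights, so $f_*$ is an isomorphism on $\operatorname{gr}^W_0$. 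Combining these, the defining composition is identified on both sides.

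Next, functoriality for morphisms of pairs $f:(X,Y)\to(X',Y')$ follows by naturality of the weight filtration, the Poincar\'e duality isomorphism, and the Hodge filtration with respect to $f^*$ and $f_*$; one simply traces a class through the three arrows on either side and invokes that each natural transformation is already known in mixed Hodge theory. Multiplicativity under products reduces, via the K\"unneth formula for mixed Hodge structures, to the compatibility of $F^\bullet$, $W_\bullet$, and Poincar\'e duality with tensor products; the weight-zero pieces split as tensor products, and the spaces of logarithmic top forms on the complement of the product boundary satisfy an analogous K\"unneth decomposition, so $\mathrm{can}(\sigma_1\times\sigma_2)=\mathrm{can}(\sigma_1)\wedge\mathrm{can}(\sigma_2)$.

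The residue/boundary compatibility is the most delicate point and will be the main obstacle. Here the plan is to compare two long exact sequences: the homological long exact sequence of the pair $(X,Y)$ whose connecting map is $\partial:H_n(X,Y)\to H_{n-1}(Y)$, and the residue sequence on logarithmic forms coming from the Gysin/residue exact sequence
\[
0 \to \Omega^n(X) \to \Omega^n_{\log}(X\setminus Y) \xrightarrow{\operatorname{Res}} \Omega^{n-1}_{\log}(Y^{\mathrm{reg}}\cap\cdots) \to 0,
\]
with its associated sequence on $F^n H^n$ and $F^{n-1}H^{n-1}$. The compatibility of Poincar\'e duality with these connecting maps (up to a standard sign) turns the diagram into a commutative square, at which point applying $\mathrm{R}^{-1}$ on both ends yields the identity $\mathrm{can}(\partial\sigma)=\operatorname{Res}(\mathrm{can}(\sigma))$. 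The hardest point to verify carefully is that the connecting map on $\operatorname{gr}^W_0$ is well defined and coincides with the topological boundary after passing from $H_n(X,Y)$ to the appropriate quotient $\operatorname{gr}^W_0 H_{n-1}(Y,\mathrm{Sing}\,Y)$; this forces one to pass to a good compactification and an embedded resolution where $Y$ becomes a simple normal crossings divisor, compute there, and then push forward invariantly using the modification invariance already established.
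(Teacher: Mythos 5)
The statement you are asked to prove is not actually proved in this paper: it is quoted verbatim as a theorem of Brown and Dúpont (\cite{brown2025positive}) and used as a black box, so there is no proof to compare your attempt against. That said, your sketch is a sensible reconstruction of the outline one finds in the cited source: unpack $\mathrm{can}$ as projection to $\operatorname{gr}^W_0 H_n(X,Y)_{\CC}$ followed by $\mathrm{R}^{-1}$, and reduce each claimed property to a naturality statement in mixed Hodge theory.

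A few places where your plan is either roundabout or underspecified. For modification invariance you propose comparing $H_n(\tilde X,\tilde Y)$ and $H_n(X,Y)$ via a long exact sequence and arguing the excess is in negative weight. That works, but it is simpler and closer to the intended mechanism to observe that the entire composition defining $\mathrm{R}$ factors through $H_n(X\setminus Y)$ and that $\operatorname{gr}^W_0 H_n(X,Y)$ is, by construction, a quotient of $F^0 H_n(X\setminus Y)$; since a modification is the identity on $X\setminus Y$, every object in the composition is literally unchanged, and the invariance is tautological. Your Mayer--Vietoris route would then be needed only to check that the projection $H_n(\tilde X,\tilde Y)\to \operatorname{gr}^W_0$ and $H_n(X,Y)\to\operatorname{gr}^W_0$ hit the same image, which does require the weight estimate you mention.

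On the residue compatibility, you correctly identify this as the hard step but your sketch of the Gysin sequence is not correct as written when $Y$ is singular: there is no short exact sequence $0 \to \Omega^n(X) \to \Omega^n_{\log}(X\setminus Y)\xrightarrow{\operatorname{Res}}\Omega^{n-1}_{\log}\to 0$ landing in forms on $Y^{\mathrm{reg}}$ unless one has already passed to a simple normal crossings model, and the residue along a singular component lands in forms on the normalization with log poles along the preimage of the double and singular loci, not on $Y^{\mathrm{reg}}$ itself. You acknowledge the need to pass to an embedded resolution, but the actual argument must (a) set up the residue sequence on the snc model, (b) identify the connecting homomorphism in the weight spectral sequence with the Poincar\'e--Leray residue, and (c) use modification invariance (already established) to descend; steps (a)--(b) are where the real content lies, and they are not supplied. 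Also note that the Poincar\'e duality you invoke is for the open variety $X\setminus Y$, which is a duality of mixed Hodge structures with a weight shift, so one must check that it sends $F^n H^n$ to $F^0 H_n$ compatibly with the connecting maps; this compatibility is a nontrivial sign-and-shift bookkeeping exercise that your sketch treats as standard.

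Linearity, functoriality, and K\"unneth multiplicativity are indeed formal once the right naturality statements in mixed Hodge theory are in hand, as you say, so those parts of the plan are fine modulo the usual care about when pushforwards are defined (generically finite, proper) so that trace maps exist on logarithmic forms.
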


This theorem highlights the advantages of the Brown–Dúpont framework for positive geometries. In particular, linearity and functoriality hold formally, whereas in the original formulation these are highly nontrivial results.

We also note that the property that the residue of ${\rm can}(\sigma)$ along a boundary component $D$ is the canonical form of the relative boundary $\partial\sigma $ only makes sense if the boundary $D$ together with its induced boundary is a genus-zero pair as well. In these cases, canonical forms can be determined recursively, similar to the original formulation by Arkani-Hamed, Bai, and Lam as presented in Definition~\ref{DefPositiveGeometry}. There are however genus-zero pairs whose induced boundary pairs have positive genus, showing that the Brown–Dúpont set-up is more general, see \cite[\S2.4.3, Example~5.9]{brown2025positive}. 

An extension of the original definition loosening the recursive requirements on boundaries was foreseeable considering the example of the circle geometry discussed in \cite[\S8, Eq.~(10.8)]{arkani2017positive}. 

\subsection{Application to Vandermonde cells}

We now turn to the pair $(\PP^n,\Pi_{n,d})$. In our set-up, unirational varieties do not admit holomorphic forms. Note however that a priori, holomorphic forms on the smooth locus $X_{reg}$ of a unirational variety $X$ might exist. One consequence of the following theorem is that their pullbacks never extend along the exceptional locus to a globally defined holomorphic form. In the case where (the normalization of) $X$ has rational singularities, no such forms can exist on the smooth locus $X_{reg}$.  
The following theorem shows that even more is true: boundary hypersurfaces of Vandermonde cells have vanishing genus.

Note that having genus zero is generally not a recursive condition (\cite{brown2025positive} 2.4.3). By \cite[Example 3.6]{brown2025positive} this does not immediately follow from having a rational smooth model. The crucial feature of Vandermonde cells is the recursive nature of their boundary stratification where the singular loci of boundary hypersurfaces contain all lower-dimensional boundaries. In particular, all lower-dimensional boundaries $D$ are also unirational and thus have vanishing Hodge number ${\rm dim}(H^{{\rm dim}D}(D))^{{{\rm dim}D},0}$.

\begin{theorem}
\label{Thm:Vanishing geometric genus}
Every irreducible boundary hypersurface of a Vandermonde cell $\Pi_{n,d}$ has vanishing genus. 
\end{theorem}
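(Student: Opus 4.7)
The plan is to combine the unirationality statement of Proposition~\ref{prop:Unirational} with the classical vanishing of global holomorphic forms on unirational varieties, and then transfer this vanishing into the mixed Hodge-theoretic definition of genus from~\cite{brown2025positive}. The strategy has two parts: a smooth-projective model argument that controls $h^{p,0}(\tilde D)$ for any resolution $\tilde D$ of $D$, followed by the translation to the Hodge numbers $h^{p,0}(H^{d-2}(D))$ of the mixed Hodge structure on $D$ itself.

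Concretely, fix an irreducible boundary hypersurface $D \subset \Pi_{n,d}$ of dimension $d-2$. By Proposition~\ref{prop:Unirational}, there is a dominant, generically finite rational map $\bar{\nu}_m \colon \PP^{d-2} \dashrightarrow D$. After resolving its indeterminacy locus and choosing a resolution of singularities $\rho \colon \tilde D \to D$, one obtains a proper surjective morphism $\tilde{\nu} \colon Z \to \tilde D$ where $Z$ is smooth projective and birational to $\PP^{d-2}$. Pullback of holomorphic $p$-forms along $\tilde{\nu}$ is injective (by the trace map, since $\tilde{\nu}$ is generically finite), and because $Z$ is rational we have $H^0(Z,\Omega_Z^p)=0$ for every $p>0$. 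Hence $h^{p,0}(\tilde D)=0$ for every $p>0$, which is the classical vanishing of holomorphic forms on unirational smooth projective varieties.

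It remains to translate this vanishing into the genus invariant $\sum_{p>0} h^{p,0}(H^{d-2}(D))$ extracted from the mixed Hodge structure on $H^{d-2}(D)$. On the top weight piece $\operatorname{gr}^W_{d-2} H^{d-2}(D)$, Deligne's theory provides a canonical surjection $H^{d-2}(\tilde D) \twoheadrightarrow \operatorname{gr}^W_{d-2} H^{d-2}(D)$, so the $(p,0)$ Hodge numbers of the top weight piece are bounded above by those of $\tilde D$ and therefore vanish for $p>0$. For the lower weight pieces $\operatorname{gr}^W_w H^{d-2}(D)$ with $w<d-2$, Deligne's simplicial hyperresolution realizes them as subquotients of cohomology of smooth projective varieties arising as iterated intersections of components of a hyperresolution of $D$. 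The recursive description of Vandermonde boundaries developed in Section~\ref{sec:vandermonde-cells} ensures that every such stratum is again a boundary component (or an intersection of boundary components) of a smaller Vandermonde cell, so Proposition~\ref{prop:Unirational} applies uniformly and the pullback argument above forces the corresponding $(p,0)$ Hodge numbers to vanish as well.

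The main obstacle is this last step: the careful bookkeeping required to control the $(p,0)$ part of each lower weight graded piece of $H^{d-2}(D)$ through the hyperresolution, and to verify that the induction on the dimension of strata of $\operatorname{bd}\Pi_{n,d}$ actually closes up. The recursive structure of the boundary stratification is what makes this work: every stratum encountered in the hyperresolution of $D$ is itself a lower-dimensional Vandermonde boundary to which Proposition~\ref{prop:Unirational} applies, so the vanishing propagates through all weights and yields $g(D)=0$.
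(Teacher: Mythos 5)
Your strategy follows the same road map as the paper's proof: Proposition~\ref{prop:Unirational} gives unirationality of $D$, one passes to a smooth model, invokes the classical vanishing of holomorphic $p$-forms on unirational smooth projective varieties, and then pushes the vanishing through the mixed Hodge structure on $H^{d-2}(D)$. The control of the top weight graded piece is essentially the same idea as in the paper, which uses the injectivity of $\operatorname{gr}^W_{d-2} H^{d-2}(D) \hookrightarrow H^{d-2}(\tilde D)$ from Peters--Steenbrink; your surjection is stated in the opposite direction and is not quite the standard formulation, but either version gives the same bound on the $(p,0)$-Hodge numbers, so this is only a minor imprecision.

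The genuine gap is in the lower weight pieces. You invoke Deligne's simplicial hyperresolution and assert that the resulting strata are again boundary components (or intersections of boundary components) of smaller Vandermonde cells, so that Proposition~\ref{prop:Unirational} applies uniformly. That claim does not hold as stated: the strata of a hyperresolution of $D$ include iterated fiber products, exceptional loci of the blowups, and normalizations, none of which is a priori a Vandermonde boundary component, and Proposition~\ref{prop:Unirational} (which parametrizes boundary hypersurfaces by the Vandermonde map on a linear slice) gives no information about such strata. You correctly flag this as the unresolved step. The paper avoids the issue entirely by using the Mayer--Vietoris sequence of the discriminant square of the resolution $p_2\colon \tilde D \to D$, with $Z = \mathrm{Sing}(D)$ and $E = p_2^{-1}(Z)$. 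There the needed vanishing of $(p,0)$-components comes not from unirationality of strata but from weight purity of $H^{d-2}(\tilde D)$, a dimension bound on $H^{d-2}(Z)$ for the compact $(d-3)$-dimensional variety $Z$ (Peters--Steenbrink Theorem~5.39), and the fact that $E$ is a simple normal crossings divisor whose components are rational (automatic, since the resolution is built from blowups of $\PP^{d-2}$). If you replace the abstract hyperresolution with this explicit discriminant square and argue via weights and dimension rather than via unirationality of the strata, the argument closes.
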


\begin{proof}
    Let $D$ be an irreducible boundary hypersurface of the Vandermonde cell $\Pi_{n,d}$.  
By Proposition~\ref{prop:Unirational}, the variety $D$ is unirational.  
Hence there exists a rational map 
$\mu \colon \PP^n \dashrightarrow D$,
for instance, the one constructed in the proof of Proposition~\ref{prop:Unirational}.  
We resolve the indeterminacies of $\mu$ by performing a finite sequence of blow-ups along smooth centers $Z_1,\dots,Z_\ell$, obtaining morphisms $p_1$ and $p_2$ such that the following diagram commutes:
    \[
\begin{tikzcd}
 & \arrow[ld, "p_1"', description]  Bl_{Z_1,\dots,Z_l}(\PP^n) \arrow{dr}{p_2} \\
\PP^n \arrow[rr,dashed,"h"] && D
\end{tikzcd}
\]
If the blow-up $Bl_{Z_1,\dots,Z_\ell}(\PP^n)$ is not smooth, we continue blowing up its singular loci until all singularities are resolved, obtaining a resolution $\tilde{D}:=\tilde{Bl_{Z_1,\dots,Z_\ell}(\PP^n)} \to Bl_{Z_1,\dots,Z_\ell}(\PP^n)$. In particular, the composition with $p_2$ resolves the singularities of $D$ (not necessarily minimally, as we may choose to blow up further along smooth centers). 
 
Now consider the genus $g(D)=g(D,\emptyset)= \sum_{p>0} h^{p,0}(H^n(D,\emptyset))$.
We will now show that $H^n(D)^{p,0}=0$ for all $p>0$.

Consider the induced morphism of mixed Hodge structures
$$p_2^{*}\colon H^k(D) \to H^k(\tilde{D}).$$
By a classical theorem (\cite[Theorem~2.29]{Peters2008}), if $D$ is smooth, the morphism $p_2^{*}$ of pure Hodge structures is injective. In the singular case, by \cite[Theorem~5.41]{Peters2008}, the surjective morphism of compact varieties $p_2$ still induces a morphism $p_2^{*}$ of mixed Hodge structures, which remains injective when restricted to the graded piece of weight~$k$. 
In particular, since the mixed Hodge structure on $H^n(\tilde{D})$ of the smooth projective variety $\tilde{D}$ is pure, the top graded piece $gr^W_{n}(H^n(D))$ of $D$ embeds naturally into $H^n(\tilde{D})$.

As $\tilde{D}$ is smooth and rational,
we have 
$$h^{p,0}(H^n(\tilde{D})) = \dim H^0(\tilde{D},\Omega_{\tilde{D}}^p) = 0,$$ 
where the first equality follows from Dolbeault's theorem, and the second from the fact that smooth rational varieties do not admit nontrivial holomorphic $p$-forms for any $p>0$.  

Consequently for $p=n$, we obtain 
$$H^{n}(D)^{n,0} \subset H^n(\tilde{D})^{n,0} = 0.$$

It remains to verify that the lower weight components $H^n(D)^{p,0}$ in the weight filtration also vanish for all $0 < p < n$.
In order to show this, we consider the Mayer--Vietoris sequence for the discriminant square associated with the resolution $\tilde{D} \to D$ (see Definition~5.37 in \cite{Peters2008}).  

Let $Z$ denote the singular locus of $D$.  
By Ursell's Theorem (Theorem~\ref{UrsellTheorem}), the set $Z$ coincides with the union of the closures of all lower-dimensional boundary components of the Vandermonde cell $\Pi_{n,d}$ that are contained in the boundary $D$.  
Let $E := p_2^{-1}(Z)$ denote the exceptional locus of the resolution $p_2$.  
The Mayer--Vietoris sequence of mixed Hodge structures associated with the discriminant square of the resolution 
$p_2\colon \tilde{D} \to D$ is given by 
\begin{align*}
\cdots & \rightarrow H^{n-1}\bigl(E\bigr) 
\rightarrow H^{n}\bigl(D\bigr)
\rightarrow H^{n}\bigl(\tilde{D}\bigr) \oplus H^n\bigl(Z\bigr) 
\rightarrow \cdots.
\end{align*}
In degree $(n-1,0)$, this becomes
\begin{align*}
\cdots & 
\rightarrow H^{n-1}\bigl(E\bigr)^{n-1,0}
\rightarrow H^{n}\bigl(D\bigr)^{n-1,0}
\rightarrow (H^{n}\bigl(\tilde{D}\bigr) \oplus H^{n}\bigl(Z\bigr))^{n-1,0}
\rightarrow \cdots.
\end{align*}

The rightmost and leftmost groups in this sequence vanish.  
Indeed, the Hodge structure on $H^n(\tilde{D})$ for the smooth variety $\tilde{D}$ is pure of weight~$n$, hence $H^n(\tilde{D})^{n-1,0}=0$.  
Since $Z$ is compact of dimension $n-1$, Theorem~5.39 of \cite{Peters2008} implies that $H^n(Z)^{n-1,0}=0$.  
Moreover, $H^{n-1}(E)^{n-1,0}=0$, since $E$ is a simple normal crossings divisor with rational components.  
Altogether, this shows that $H^n(D)^{p,0}=0$ for all $p>0$, as desired.
\end{proof}
We now turn to the study of the genus of the pair $(\PP^n,\Pi_{n,d})$.  
Recall that the genus measures the dimensions of the holomorphic components $H^{p,0}$ of the mixed Hodge structure on $H^n(\PP^{d-1},\Pi_{n,d})$.
In particular, a vanishing genus implies that the canonical form map ${\rm can}$ is well defined and unique.  
The following corollary combines these observations with the vanishing of the geometric genus for boundary hypersurfaces established in Theorem~\ref{Thm:Vanishing geometric genus}.

\begin{corollary}\label{cor:genus-zero}
The pair $(\PP^{d-1},\Pi_{n,d})$ is a genus-zero pair.  
Consequently, the canonical form map ${\rm can}$ is well defined in the sense of Brown–Dúpont. 
\end{corollary}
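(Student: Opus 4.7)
The plan is to establish the vanishing $h^{p,0}(H^{d-1}(\PP^{d-1}, Y)) = 0$ for all $p > 0$, where $Y$ denotes the Zariski closure of $\mathrm{bd}\,\Pi_{n,d}$. Once this is shown, $g(\PP^{d-1}, \Pi_{n,d}) = 0$ by definition, and the second assertion follows immediately from the Brown--Dúpont framework: for a genus-zero pair the iterated residue map ${\rm R}$ has trivial kernel, hence becomes invertible, and ${\rm can}$ is then defined as its inverse composed with the projection $H_{d-1}(\PP^{d-1},\Pi_{n,d}) \to \operatorname{gr}^W_0$.

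First, I would apply the long exact sequence of mixed Hodge structures of the pair $(\PP^{d-1}, Y)$:
\[
\cdots \to H^{d-2}(\PP^{d-1}) \to H^{d-2}(Y) \to H^{d-1}(\PP^{d-1}, Y) \to H^{d-1}(\PP^{d-1}) \to \cdots
\]
Since morphisms of mixed Hodge structures are strict with respect to both the weight and Hodge filtrations, the sequence remains exact after taking $(p,0)$-components. As $\PP^{d-1}$ carries a pure Hodge--Tate structure concentrated on the diagonal, $H^k(\PP^{d-1})^{p,0} = 0$ for every $p > 0$ and every $k$. This yields an isomorphism $H^{d-1}(\PP^{d-1}, Y)^{p,0} \cong H^{d-2}(Y)^{p,0}$ for all $p > 0$, reducing the problem to a statement purely about the algebraic boundary.

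Next, I would reduce the vanishing of $H^{d-2}(Y)^{p,0}$ to Theorem~\ref{Thm:Vanishing geometric genus}. Since $\dim Y = d - 2$, the weight filtration on $H^{d-2}(Y)$ is supported in weights $\geq d-2$, so the only potentially nonzero $(p,0)$-component can occur at $p = d-2$. Decomposing $Y = \bigcup_i D_i$ into its irreducible components and using a simplicial hyperresolution (or equivalently the normalization exact sequence), the top weight piece $\operatorname{gr}^W_{d-2} H^{d-2}(Y)$ injects into $\bigoplus_i H^{d-2}(D_i)$ pulled back along the normalization. By Theorem~\ref{Thm:Vanishing geometric genus}, each boundary hypersurface $D_i$ has genus zero; in particular $H^{d-2}(D_i)^{d-2,0} = 0$. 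The remaining contributions to $H^{d-2}(Y)^{p,0}$ come from cohomology of $p$-fold intersections $D_I = \bigcap_{i \in I} D_i$ in degree $< d-2$; since by Theorem~\ref{UrsellTheorem} each such intersection is again a lower-dimensional boundary stratum of a Vandermonde cell and hence unirational by the argument of Proposition~\ref{prop:Unirational}, the proof of Theorem~\ref{Thm:Vanishing geometric genus} applies recursively to give vanishing of the relevant $(r,0)$-components on every stratum.

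The main technical obstacle I expect lies in this second step: carefully organising the weight spectral sequence of the hyperresolution to ensure that no $(r,0)$-class survives at the $E_\infty$-page from any of the multi-intersection strata. Although each individual stratum is handled by the method of Theorem~\ref{Thm:Vanishing geometric genus}, the differentials $d_1$ of the spectral sequence connect different strata and must be shown not to produce spurious Hodge classes of type $(p,0)$ with $p > 0$. Once this bookkeeping is verified, combination with the first step delivers $g(\PP^{d-1}, \Pi_{n,d}) = 0$, and well-definedness of the canonical form map in the sense of Brown--Dúpont follows formally from the definition of ${\rm can}$ as ${\rm R}^{-1}$ composed with the projection onto $\operatorname{gr}^W_0$.
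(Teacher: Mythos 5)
Your route is genuinely different from the paper's: you work directly with the long exact sequence of the pair and the weight spectral sequence of a hyperresolution of $Y = \mathrm{bd}\,\Pi_{n,d}$, whereas the paper filters the boundary by the dimensional strata $Z_0 \subset Z_1 \subset \cdots \subset Z_{d-2}$ and repeatedly applies the Brown--D\'upont subadditivity estimates (their Proposition~3.10 for triples, Proposition~3.9 for removing codimension-two loci, and Corollary~3.13 for passing to irreducible components). The paper's approach outsources precisely the bookkeeping you flag as your ``main technical obstacle'' to lemmas that Brown--D\'upont have already established, and this is why their proof closes cleanly. Your first reduction step (from $H^{d-1}(\PP^{d-1},Y)^{p,0}$ to $H^{d-2}(Y)^{p,0}$ via strictness and purity of $H^\bullet(\PP^{d-1})$) is correct and is a legitimate substitute for one invocation of their Proposition~3.9.

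There are, however, two issues. First, a sign error: for a \emph{proper} variety $Y$ of dimension $d-2$, Deligne's bounds give $H^{d-2}(Y)$ weights $\leq d-2$, not $\geq d-2$ as you wrote (the lower bound $\geq k$ holds for \emph{smooth} varieties, not proper ones). Consequently, it is not true that only $(p,0) = (d-2,0)$ can occur; a priori every $(p,0)$ with $0 \leq p \leq d-2$ may contribute, and these live in the lower graded pieces $\operatorname{gr}^W_p H^{d-2}(Y)$ for $p < d-2$. This makes your concern about the multi-intersection strata not merely a technical nicety but the essential content of the argument. Second, you acknowledge but do not resolve this content: showing that no $(r,0)$-class with $r > 0$ survives to $E_\infty$ requires controlling the $E_1$-differentials of the weight spectral sequence across all strata, and unirationality of each stratum (from Theorem~\ref{Thm:Vanishing geometric genus} and Proposition~\ref{prop:Unirational}) controls the $(\dim,0)$-Hodge number of each stratum but not the intermediate degrees $H^j$ with $j < \dim$ of those strata which also feed into $\operatorname{gr}^W H^{d-2}(Y)$. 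Without an additional argument (such as the one Theorem~\ref{Thm:Vanishing geometric genus} runs via the Mayer--Vietoris sequence of the discriminant square, applied stratum by stratum), the proposed proof is incomplete. The paper's inductive stratification avoids opening the spectral sequence at all.
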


\begin{proof}
Consider the boundary stratification $(Z_i)_{i=0,\dots,d-2}\subset \Pi_{n,d}$, where $Z_i$ denotes the union of all boundary components of dimension at most~$i$.  
By Theorems~\ref{UrsellTheorem} and~\ref{Thm:TypeofCyclicPolytope}, each stratum satisfies $(Z_i \setminus Z_{i-1}) \neq \emptyset$ for all~$i$.  
By \cite[Proposition~3.10]{brown2025positive}, the genus of the pair $(\PP^{d-1},\mathrm{bd}\,\Pi_{n,d})$ satisfies
\[
g(\PP^{d-1},\mathrm{bd}\,\Pi_{n,d})
\;\leq\;
g(\PP^{d-1},Z_{d-3})
\,+\,
g(\mathrm{bd}\,\Pi_{n,d},Z_{d-3}) .
\]
Since $Z_{d-3}$ has codimension at least~$2$ in $\PP^{d-1}$, \cite[Proposition~3.9]{brown2025positive} implies
\[
g(\PP^{d-1},Z_{d-3}) = g(\PP^{d-1}) = 0.
\]
We now iterate this estimate using the same argument:
\[
g(\mathrm{bd}\,\Pi_{n,d},Z_{d-3})
\;\leq\;
g(\mathrm{bd}\,\Pi_{n,d},Z_{d-4})
\,+\,
g(Z_{d-3},Z_{d-4}).
\]
Denote by $Y_i$ the irreducible components of $\mathrm{bd}\,\Pi_{n,d}$. Because $Z_{d-4}$ has codimension~$2$ in $\mathrm{bd}\,\Pi_{n,d}$, we again obtain, by Theorem~\ref{Thm:Vanishing geometric genus},
\[
g(\mathrm{bd}\,\Pi_{n,d},Z_{d-4})
= g(\mathrm{bd}\,\Pi_{n,d}) \leq \sum g(Y_i) = 0,
\]
where the last inequality follows from \cite[Corollary~3.13 and Proposition~3.9]{brown2025positive}.  
Proceeding inductively in this manner reduces the problem to the lowest-dimensional stratum:
\[
g(Z_1,Z_0)
= g(Z_1),
\]
where the equality follows from \cite[Proposition~3.5]{brown2025positive}.  
Finally, Theorem~\ref{Thm:Vanishing geometric genus} implies $g(Z_1)=0$.  

\smallskip
Hence all intermediate genera vanish, and therefore $g(\PP^{d-1},\mathrm{bd}\,\Pi_{n,d})=0$.  
\end{proof}

\section{The limiting Vandermonde cell \texorpdfstring{$\Pi_d$}{Pi\_d}}\label{sec:limitingcell}

For fixed dimension $d-1$, the Vandermonde cells $\Pi_{n,d}$ form an increasing sequence of bounded semi-algebraic sets whose union converges to a compact limit $\Pi_d$. This limiting object captures the asymptotic geometry of Vandermonde cells as the number of points grows, revealing a transition from algebraic to analytic behavior. In particular, while each finite $\Pi_{n,d}$ is semi-algebraic and admits a well-defined canonical form, the limit $\Pi_d$ develops infinitely many singular boundary components and ceases to be semi-algebraic. Understanding the geometric and analytic nature of this limit and whether a meaningful analogue of the canonical form persists is the main focus of this section.

\medskip In the planar case $d=3$, for example, the upper boundary converges to the cuspidal cubic $\{y^2 = x^3\}$, while the lower boundary acquires infinitely many new components given by the lower parts of the curves $b_k$ from Lemma~\ref{Equations planar boundaries}. From this description it follows that the topological boundary of the limiting Vandermonde cell contains infinitely many singular points and is therefore not semi-algebraic \cite[§2.25]{WonderfulGeometry}. This is already apparent in the planar case, where the points $(\tfrac{1}{k},\tfrac{1}{k^2})$ are singular for all $k \in \mathbb{N}$. Since planar Vandermonde cells arise as projections of higher-dimensional ones, it follows that their higher-dimensional limits are not semi-algebraic either.
Consequently, it is not clear what a \emph{canonical form} for $\lim_{n \to \infty} \Pi_{n,d}$ could or should be.  

Locally, the coefficient function of a rational form $\omega$ on $\PP^n$ can be expressed as $\tfrac{f}{g}$, where $f$ and $g$ are holomorphic. If $\omega$ were required to have poles along all irreducible boundary components, then $g$ would vanish on infinitely many distinct components in any sufficiently small analytic neighborhood of $0$. By the Weierstrass preparation theorem, this is impossible. Hence no rational differential form can have poles precisely along the boundary of $\Pi_d$.  

An alternative route to defining canonical forms for full-dimensional polytopes is through dual volumes; see \cite{gao2024dualmixedvolume} for a detailed discussion of dual mixed volume functions and their relation to positive geometries. In \cite{mazzucchelli2025canonicalformsdualvolumes}, the dual volume approach is refined through additional flexibility in regard to the used measure. 
Recall that for a subset $A \subset \RR^d$, its polar dual is defined as
\[
A^\vee := \{\, y \in \RR^d \;\mid\; \langle y, x \rangle \geq -1 \text{ for all } x \in A \,\}.
\]
We denote the Euclidean volume by ``$\Vol$'', normalized so that the standard simplex has volume~$1$.  

\begin{proposition}[\cite{arkani2017positive}, Equation~7.180]\label{ConvexApproximation}
Let $P \subset \RR^d \subset \PP^d$ be a full-dimensional projective polytope. 
Then its canonical form $\omega_P$ is given by 
\[
\omega_P(x)= \Vol\bigl((P-x)^{\vee}\bigr)\, dx_1 \cdots dx_d.
\]
\end{proposition}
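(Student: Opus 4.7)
The plan is to reduce the problem to the case of a simplex, verify the identity there by direct computation, and then invoke additivity of the canonical form to extend it to an arbitrary projective polytope. The argument proceeds in three steps.

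First, I would verify the identity directly on a $d$-simplex $\Delta\subset\RR^d$. Writing the facet hyperplanes of $\Delta$ as $\{\ell_i(x)=0\}_{i=0}^{d}$, normalized so that $\ell_i>0$ on the interior of $\Delta$, the canonical form of a simplex is classically
\[
\omega_\Delta(x) \;=\; \frac{C_\Delta}{\prod_{i=0}^{d}\ell_i(x)}\,dx_1\wedge\cdots\wedge dx_d,
\]
for an explicit constant $C_\Delta$ determined by the chosen volume normalization. For $x$ in the interior of $\Delta$, the translate $\Delta - x$ contains the origin in its interior, so its polar dual is itself a simplex whose vertices are the outward facet normals $n_i$ rescaled by $1/\ell_i(x)$. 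Expanding the volume of this dual simplex as a $(d+1)\times(d+1)$ determinant in the rescaled normals produces precisely the rational function $C_\Delta/\prod_{i}\ell_i(x)$, matching $\omega_\Delta$ up to the volume form.

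Second, I would invoke Proposition~\ref{AdditivityofCanForms} to pass from simplices to arbitrary polytopes. Fix any triangulation $P=\bigcup_i\Delta_i$ into $d$-simplices with pairwise disjoint interiors and coherently induced orientations. Additivity of canonical forms yields $\omega_P=\sum_i\omega_{\Delta_i}$. Extending each dual-volume function $x\mapsto \Vol((\Delta_i-x)^{\vee})$ from the interior of $\Delta_i$ to a rational function on all of $\RR^d$ (via the explicit formula from Step~1), I would show the parallel identity
\[
\Vol((P-x)^{\vee})\,dx_1\cdots dx_d \;=\; \sum_i \Vol((\Delta_i-x)^{\vee})\,dx_1\cdots dx_d
\]
at the level of rational $d$-forms. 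Combined with Step~1 applied to each $\Delta_i$, this immediately gives the claimed formula for $P$.

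The main obstacle is the additivity in Step~2. The polar-dual operation is not additive on unions of convex bodies, so the identity cannot be read geometrically as a decomposition of the set $(P-x)^{\vee}$. Instead, it must be interpreted at the level of rational functions, and proving it requires tracking how the spurious simple poles along internal facets of the triangulation cancel between each adjacent pair $\Delta_i,\Delta_j$: on a shared facet the two simplex contributions acquire opposite residues because the induced orientations are opposite, exactly mirroring the cancellation that makes Proposition~\ref{AdditivityofCanForms} consistent. Once this is done, one may alternatively argue via uniqueness: both rational $d$-forms have simple poles precisely along the facet hyperplanes of $P$, no poles at infinity (as is easily checked by bounding $\Vol((P-x)^{\vee})$ as $\|x\|\to\infty$ along a direction exiting $P$), and matching residues along each facet by induction on dimension, so they must coincide.
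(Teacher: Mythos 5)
The paper does not actually prove this proposition; it is cited verbatim from \cite{arkani2017positive}, Equation 7.180, with no argument supplied. So your proposal is being compared to the external reference rather than to anything in the paper itself.

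Your ``alternative'' argument via uniqueness is the one that actually works and is closest to how the result is usually established: show directly that $\Vol((P-x)^{\vee})\,dx$ is a rational $d$-form whose poles are simple and supported exactly on the facet hyperplanes of $P$, that it has no pole along the hyperplane at infinity, and that its residue along each facet hyperplane is, by induction on dimension, the dual-volume form of that facet; the uniqueness of the canonical form then finishes the job. Demoting this to an afterthought is a misjudgment of where the real content lies --- it should be the main step, with the simplex verification in Step 1 serving only as the base of the induction.

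Your stated main route has a genuine gap. Step 1 (simplex case) plus Proposition~\ref{AdditivityofCanForms} gives $\omega_P = \sum_i \Vol\bigl((\Delta_i-x)^{\vee}\bigr)\,dx$, but this is not yet the statement you want; you still need the separate identity $\Vol\bigl((P-x)^{\vee}\bigr) = \sum_i \Vol\bigl((\Delta_i-x)^{\vee}\bigr)$ as rational functions. The residue-cancellation observation you offer only shows that the right-hand side has no poles along internal walls of the triangulation; it does not identify the resulting rational function with $\Vol((P-x)^{\vee})$. That additivity of dual volumes under primal triangulation is a nontrivial theorem in its own right (a valuation property sometimes attributed to Filliman), and proving it amounts to redoing the pole-and-residue comparison against $\Vol((P-x)^{\vee})$ --- i.e., your ``alternative'' argument. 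As written, Route A is circular or at best incomplete; Route B, properly fleshed out, is the proof.
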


The coefficient $\Vol((P-x)^{\vee})$ is a well-defined holomorphic function for all $x$ in the interior of $P$, and extends analytically to all $x \in \PP^n$ not contained in the span of a facet of~$P$.  

The polar dual of a set $A$ coincides with that of its convex hull, ${\rm conv}(A)$. Hence, the canonical forms of non-convex candidates $A$ cannot be expressed directly via dual volumes. However, if $A$ can be subdivided into convex polytopes, the dual volume formula can still be applied piecewise. For instance, the set $A$ from \cite[Figure~12]{arkani2017positive} can be decomposed into two triangles, a left triangle $T_l$ and a right triangle $T_r$, distinguished by the sign of their horizontal coordinate. In this case,
\[
\omega_A(x) \;=\; \omega_{T_l}(x) + \omega_{T_r}(x) 
\;=\; \Vol\bigl((T_l - x)^{\vee}\bigr) + \Vol\bigl((T_r - x)^{\vee}\bigr).
\]

This viewpoint also sheds light on the analytic continuation of $\Vol((P - x)^{\vee})$ for $x$ outside the interior of $P$. For example, if $x \in T_l$ lies on the adjoint line (noting that for non-convex sets the adjoint may intersect the interior), then
\[
\Vol\bigl((T_r - x)^{\vee}\bigr) = -\,\Vol\bigl((T_l - x)^{\vee}\bigr),
\]
where the right-hand side represents, up to sign, the actual volume of a polytope, while the left-hand side is obtained via analytic continuation.

We summarize this observation in the following proposition.

\begin{proposition}\label{prop:dual}
    The value of the canonical function $\omega_P(x)$ for a point $x \notin P^{\circ}$ 
    can be determined by exclusively computing dual volumes of other polytopes containing $x$.   
\end{proposition}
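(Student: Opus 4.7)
The plan is to exploit the additivity of canonical forms (Proposition~\ref{AdditivityofCanForms}) together with a strategic choice of auxiliary polytopes, so as to rewrite the analytically continued value $\omega_P(x) = \Vol((P-x)^{\vee})$ as a signed sum of genuine dual volumes evaluated at $x$.

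First, I would pick an auxiliary polytope $Q$ with $x \in Q^{\circ}$ and with $Q \cap P = \emptyset$, which is possible since $x \notin P^{\circ}$ (after a small perturbation if $x \in \partial P$). Then $P \cup Q$ is an interior-disjoint polytopal region, and additivity gives
\[
\omega_{P \cup Q}(x) \;=\; \omega_P(x) + \omega_Q(x),
\]
so that $\omega_P(x) = \omega_{P \cup Q}(x) - \omega_Q(x)$. Since $x \in Q^{\circ}$, the quantity $\omega_Q(x) = \Vol((Q-x)^{\vee})$ is an honest dual volume of a polytope containing $x$.

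Next, I would enclose $P \cup Q$ in a larger convex polytope $S$ chosen so that $x \in S^{\circ}$, and subdivide $S$ into the polytopes $P, Q, R_1, \dots, R_m$ with pairwise disjoint interiors, where the $R_j$ cover $S \setminus (P \cup Q)^{\circ}$. A second application of additivity yields
\[
\omega_S(x) \;=\; \omega_P(x) + \omega_Q(x) + \sum_{j=1}^m \omega_{R_j}(x),
\]
and rearranging gives
\[
\omega_P(x) \;=\; \omega_S(x) - \omega_Q(x) - \sum_{j=1}^m \omega_{R_j}(x).
\]
Now $\omega_S(x)$ is a genuine dual volume since $x \in S^{\circ}$, and for any $R_j$ with $x \in R_j^{\circ}$ the same is true of $\omega_{R_j}(x)$. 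For the remaining $R_j$ with $x \notin R_j^{\circ}$, I would apply the same construction recursively.

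The main obstacle is termination, since within any single polytopal subdivision at most one cell contains $x$ in its interior. The key observation needed is that the subdivisions at each stage can be chosen so that every residual cell $R_j$ with $x \notin R_j^{\circ}$ has strictly smaller combinatorial complexity (for instance, fewer facets separating $x$ from $R_j$, or a smaller number of vertices) than the stage preceding it. Together with the rationality of $\omega_P$ and the fact that only finitely many new residuals are produced per step, this descent guarantees that the recursion halts after finitely many rounds, producing $\omega_P(x)$ as a finite signed sum of dual volumes of polytopes each containing $x$ in its interior, as claimed.
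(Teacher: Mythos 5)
Your approach is recognizably in the same spirit as the paper's — both rely on additivity of canonical forms and aim to express $\omega_P(x)$ as a signed combination of genuine dual volumes — but your recursion scheme has a genuine gap in its termination argument. At each stage you re-embed the residual cells $R_j$ (those not containing $x$ in their interior) into a fresh ambient polytope and subdivide anew, which spawns a new collection of residuals, and you assert that these can be chosen with "strictly smaller combinatorial complexity." That claim is neither made precise nor justified: there is no canonical decreasing measure here, and naively re-running the construction on each $R_j$ can produce residuals just as complicated as what you started with, so the recursion need not bottom out.

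The paper avoids this entirely by never leaving a single fixed subdivision. One embeds $P$ as a cell $P_0$ in a subdivision of a larger polytope $\mathcal{P}$ chosen so that some cell $P_{i_0}$ has $x$ in its interior, and then \emph{propagates outward} from $P_{i_0}$: for a cell $P_{i_1}$ sharing a facet with $P_{i_0}$, one refines locally so that $P_{i_0}\cup P_{i_1}$ is convex, and since that union contains $x$ in its interior, both $\omega_{P_{i_0}}(x)$ and $\omega_{P_{i_0}\cup P_{i_1}}(x)$ are actual dual volumes, giving $\omega_{P_{i_1}}(x)$ by subtraction. Iterating this facet-by-facet (with occasional refinement or coarsening) exhausts the finitely many cells of the fixed subdivision, so termination is immediate. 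You should replace your recursive re-embedding and complexity-descent heuristic with this propagation argument inside a single subdivision; your initial move of introducing a disjoint $Q$ with $x \in Q^{\circ}$ is also an unnecessary detour, since $P\cup Q$ is disconnected and its dual volume is itself not a genuine polytope volume, which is exactly why you then had to pass to $S$ anyway.
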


\begin{proof}
    Consider a larger polytope $\mathcal{P}$ containing $x$ and a subdivision of $\mathcal{P}$ into polytopes $(P_i)_{i = 0, \dots, n}$, intersecting along common faces, including $P := P_0$, such that there exists a polytope $P_{i_0}$ containing $x$ in its interior. 
    The value $\omega_P(x)$ is the sum $\sum_{i=0}^n \omega_{P_i}(x)$. Consider a polytope $P_{i_1}$ that intersects $P_{i_0}$ in a facet. By refining the subdivision, we may assume that $P_{i_0} \cup P_{i_1}$ is convex. Then we have
    \[
    \omega_{P_{i_0} \cup P_{i_1}}(x) = \omega_{P_{i_0}}(x) + \omega_{P_{i_1}}(x),
    \]
    where two of the three terms can be computed as actual volumes. Iterating this procedure, with potential refinement or coarsening of the triangulation, we can compute the value of all canonical functions $\omega_{P_i}(x)$.     
\end{proof}

The following example illustrates the proof of Proposition~\ref{prop:dual}.
\begin{example}
   In order to determine $\omega_{P_3}(x)$, we first compute $\omega_{P_1}(x)$ and $\omega_{P_2}(x)$ by evaluating the volumes of 
$(P_{i_0} - x)^{\vee}$, $((P_{i_0} \cup P_1) - x)^{\vee}$, and $((P_{i_0} \cup P_2) - x)^{\vee}$. 
These results are then used to obtain $\omega_{P_1}(x)$ and $\omega_{P_2}(x)$. 
Finally, $\omega_{P_3}(x)$ is determined via the equality
\[
\omega_{P_{i_0}}(x) + \omega_{P_1}(x) + \omega_{P_2}(x) + \omega_{P_3}(x)
= \omega_{P_{i_0} \cup P_1 \cup P_2 \cup P_3}(x).
\]

\begin{figure}[ht!]
\centering
\includegraphics[width=50mm]{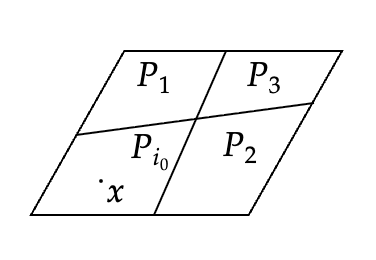}
\caption{The polytopal subdivision used to determine $\omega_{P_3}(x)$.}
\label{fig:subdivision}
\end{figure}

\end{example}

Note that the polar dual $P^{\vee}$ of a polytope $P$ is itself a polytope if and only if $0$ lies in the interior $P^{\circ}$ of $P$. In general, canonical forms are defined only up to the orientation of the smooth part of the interior, or equivalently, up to a choice of sign on each connected component of the interior. In this work, we restrict attention to ambient positive geometries with connected interior, so the orientation amounts to a single global sign choice.

Theorem~\ref{ConvexApproximation} can also be applied to convex semi-algebraic sets defined by non-linear equations by approximating them with an increasing sequence of polytopes.  

\begin{proposition}\label{PolytopeApproximation}
Let $X_{\geq 0} \subset \RR^d \subset \PP^d$ be a convex semi-algebraic set such that the pair $(\PP^d,X_{\geq 0})$ is a positive geometry (in the sense of Definition~\ref{DefPositiveGeometry}) with canonical form $\omega_{X_{\geq 0}}$. Suppose $(P_j)_{j \in \mathbb{N}}$ is a sequence of full-dimensional polytopes satisfying: 
\begin{enumerate}
    \item Each polytope $P_j$ is contained in $X_{\geq 0}$, the interiors of distinct polytopes are disjoint, and whenever $P_i \cap P_j \neq \emptyset$, the intersection is a common face of both.
    \item The sequence $\bigl(\bigcup_{j \leq i} P_j\bigr)_{i \in \mathbb{N}}$ is increasing, convex, and converges to $X_{\geq 0}$, i.e., for every $x \in X_{\geq 0}$ there exists $j \in \mathbb{N}$ with $x \in P_j$.
\end{enumerate}
Then the canonical forms converge in the limit:
\[
\lim_{j \to \infty} \, \omega_{\cup_{i \leq j} P_i} \;=\; \omega_{X_{\geq 0}}.
\]
\end{proposition}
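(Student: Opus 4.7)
The plan is to combine three ingredients: the dual-volume representation of polytopal canonical forms from Proposition~\ref{ConvexApproximation}, the additivity principle from Proposition~\ref{AdditivityofCanForms}, and a monotone-convergence argument for volumes of a decreasing family of convex bodies obtained by polar duality. Throughout, write $Q_j := \bigcup_{i \leq j} P_i$ for the $j$-th cumulative union; by assumption~(2), each $Q_j$ is a full-dimensional convex polytope, and $Q_j \uparrow X_{\geq 0}$.

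First, I would fix $j$ and express $\omega_{Q_j}$ in closed form. Assumption~(1) ensures that $(P_i)_{i \leq j}$ is a face-compatible polytopal subdivision of $Q_j$, so Proposition~\ref{AdditivityofCanForms} gives $\omega_{Q_j} = \sum_{i \leq j} \omega_{P_i}$, and applying Proposition~\ref{ConvexApproximation} to the convex polytope $Q_j$ yields
\[
\omega_{Q_j}(x) \;=\; \Vol\bigl((Q_j - x)^{\vee}\bigr)\, dx_1 \cdots dx_d.
\]
Next, I would fix $x$ in the interior of $X_{\geq 0}$ and analyze the sequence of dual volumes. Since $Q_j \uparrow X_{\geq 0}$ as nested convex bodies, polar duality reverses inclusions, so $((Q_j - x)^{\vee})_{j}$ is a decreasing family of compact convex bodies. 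Because $X_{\geq 0}^{\circ}$ contains a neighborhood of $x$, each $Q_j - x$ eventually contains a common ball around the origin, so the polars are uniformly bounded and continuity of polar duality along monotone sequences gives $\bigcap_j (Q_j - x)^{\vee} = (X_{\geq 0} - x)^{\vee}$. Monotone convergence of Lebesgue measure then yields the pointwise limit
\[
\lim_{j \to \infty} \Vol\bigl((Q_j - x)^{\vee}\bigr) \;=\; \Vol\bigl((X_{\geq 0} - x)^{\vee}\bigr).
\]

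The final step is to identify the pointwise limit $\Vol((X_{\geq 0} - x)^{\vee})\, dx_1 \cdots dx_d$ with the canonical form $\omega_{X_{\geq 0}}$. The cleanest route is to invoke the extension of the dual-volume representation from polytopes to convex semi-algebraic positive geometries developed in~\cite{gao2024dualmixedvolume, mazzucchelli2025canonicalformsdualvolumes}, which guarantees that $x \mapsto \Vol((X_{\geq 0} - x)^{\vee})$ is precisely the rational coefficient of $\omega_{X_{\geq 0}}$, with logarithmic poles along the algebraic closure of $\mathrm{bd}\,X_{\geq 0}$ and the correct residues along each boundary stratum. Alternatively, one can use uniqueness of the canonical form: upgrade the pointwise monotone convergence to uniform convergence on compact subsets of $X_{\geq 0}^{\circ}$ via Dini's theorem, then check the defining logarithmic-pole and residue axioms on the limit by approximating each boundary stratum by the corresponding facets of the $Q_j$. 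The main obstacle is precisely this identification step: while convergence of dual volumes on the interior is immediate, controlling the limit near $\mathrm{bd}\,X_{\geq 0}$ requires verifying rationality of the limit coefficient and matching the pole divisor—delicate when the boundary is singular, which is why the Brown–Dúpont framework of Section~\ref{Section:BrownDupont-setup} provides a convenient setting in which residue compatibility can be stated intrinsically.
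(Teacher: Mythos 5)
Your proof is correct and follows essentially the same strategy as the paper's, namely expressing $\omega_{Q_j}$ via the dual-volume formula of Proposition~\ref{ConvexApproximation} and passing to the limit in $j$, but your convergence step is noticeably cleaner. The paper establishes convergence by splitting $\sum_i \Vol((P_i-x)^\vee)$ into a fixed positive term $\Vol((P_{i_0}-x)^\vee)$ plus a sum of negative terms, then invokes boundedness below; this implicitly requires knowing the sequence is monotone. You instead observe directly that polar duality reverses inclusions, so $((Q_j-x)^\vee)_j$ is a decreasing sequence of compact convex bodies (uniformly bounded once $x$ is in the interior of some $Q_j$), and that $\bigcap_j(Q_j-x)^\vee=(X_{\geq 0}-x)^\vee$ because polarity turns increasing unions into decreasing intersections. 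Monotone convergence of Lebesgue measure then closes the argument in one step, with no need for the positive/negative bookkeeping. One small point you should make explicit: passing from "$x$ is in the interior of $X_{\geq 0}$" to "$x$ lies in the interior of $Q_j$ for $j$ large" uses the convexity of $Q_j$ together with assumption~(2) (pick a small simplex around $x$ inside $X_{>0}$; each vertex lies in some $Q_{j_i}$, so the whole simplex lies in $Q_{\max j_i}$), and this is what guarantees the polars are eventually uniformly bounded.

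On the identification of the pointwise limit $\Vol((X_{\geq 0}-x)^\vee)\,dx$ with $\omega_{X_{\geq 0}}$, you are more candid than the paper: the paper simply asserts that the limit equals the canonical form "since the difference between $\bigcup P_i$ and $X_{\geq 0}$ tends to the empty set", which is informal and does not actually verify the pole divisor or the residue conditions on $\partial X_{\geq 0}$. Your observation that this is precisely where the real work lies, and your two proposed routes (appealing to the dual-volume literature, or checking uniqueness of the canonical form via Dini and residue matching), are accurate assessments of the gap the paper leaves implicit.
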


\begin{proof}
Consider the limit
\[
\lim_{j\to \infty} \, \omega\!\left(\bigcup_{i \leq j} P_{i}\right)(x).
\]
For any point $x$ in the interior of some $P_{i_0}$, we have
\[
\lim_{j\to \infty} \omega\!\left(\bigcup_{i \leq j} P_{i}\right)(x)
= \lim_{j\to \infty} \sum_{i \leq j} \omega(P_{i})(x)
= \lim_{j\to \infty} \sum_{i \leq j} \Vol\!\bigl((P_i - x)^{\vee}\bigr).
\]
This can be rewritten as
\[
\lim_{j\to \infty} \left( \Vol\!\bigl((P_{i_0}-x)^{\vee}\bigr) \;+\; \sum_{\substack{i \leq j \\ i \neq i_0}} \Vol\!\bigl((P_i - x)^{\vee}\bigr) \right).
\]
The first summand is a fixed positive number (the orientation is chosen so that the signed volume is positive; see the remark above). The second summand is negative: for every $j \geq i_0$, we have
\[
\Vol\!\left(\bigl(\bigcup_{i \leq j} P_i - x \bigr)^{\vee}\right) 
= \omega\!\left(\bigcup_{i \leq j} P_i\right)(x)
= \Vol\!\bigl((P_{i_0}-x)^{\vee}\bigr) 
+ \sum_{\substack{i \leq j \\ i \neq i_0}} \Vol\!\bigl((P_i - x)^{\vee}\bigr),
\]
and
\[
0 < \Vol\!\left(\bigl(\bigcup_{i \leq j} P_i - x \bigr)^{\vee}\right) \leq \Vol\!\bigl((P_{i_0}-x)^{\vee}\bigr),
\]
since $P_{i_0} \subset \bigcup_{i \leq j} P_i$.  
If $\mathcal{P}$ is a polytope containing $X_{\geq 0}$, then the sequence
\[
\left( \Vol\!\left(\bigl(\bigcup_{i \leq j} P_i - x \bigr)^{\vee}\right) \right)_{j \in \mathbb{N}}
\]
is bounded below and hence convergent. Its limit coincides with the canonical form of $X_{\geq 0}$, since the difference between $\bigcup_{i \leq j} P_i$ and $X_{\geq 0}$ tends to the empty set as $j \to \infty$.
\end{proof}

\noindent{\bf Non-convex semi-algebraic sets.}
Proposition~\ref{PolytopeApproximation} can be extended to certain non-convex sets $X_{\geq 0}$. Consider the convex hull ${\rm conv}(X_{\geq 0})$, and assume that each connected component of ${\rm conv}(X_{\geq 0}) \setminus X_{\geq 0}$ is convex. In this case, the canonical form of $X_{\geq 0}$ can be obtained from the \emph{outer triangulation}
\[
{\rm conv}(X_{\geq 0}) \;\setminus\; \{ \text{components of } {\rm conv}(X_{\geq 0}) \setminus X_{\geq 0} \}.
\]
In particular, each topological boundary of a planar Vandermonde cell $\Pi_{n,3}$ is concave, allowing the canonical forms $\Omega_{\Pi_{n,3}}$ to be reconstructed using this approach. Specifically, every component of the complement ${\rm conv}(\Pi_{n,3}) \setminus \Pi_{n,3}$ is a semi-algebraic set of type $A_{III}$, while ${\rm conv}(\Pi_{n,3})$ itself is a cyclic polytope. Hence one can avoid computing canonical forms for the semi-algebraic sets $A_{I}$ and $A_{II}$ from Proposition~\ref{Prop:SemiAlgTypes}.  

\medskip
\noindent{\bf The limiting Vandermonde cell.}
The outer-triangulation viewpoint also suggests a natural analogue of the canonical form for the limiting Vandermonde cell $\Pi_{3}$:
\[
\Vol\bigl((\Pi_{3}-x)^{\vee}\bigr) \;-\; \sum_{i=1}^{\infty} \omega_{A_{III,i}},
\]
where $A_{III,i}$ denotes the semi-algebraic set of type $A_{III}$ given by ${\rm conv}(\Pi_{i+1,3}) \setminus \Pi_{i+1,3}$. Considering dual volumes shows that this infinite sum converges on a full-dimensional subset of the interior.  

Geometrically, ${\rm conv}(\Pi_{3})$ is an \emph{infinite cyclic polytope}, which may be viewed as a limiting analogue of an amplituhedron. However, this object differs from the limit amplituhedron studied by Köffler and Sinn in \cite{koefler2025taking}. It therefore represents a distinct type of limiting positive geometry, whose analytic properties are inherited from the finite Vandermonde cells but whose boundary is no longer semi-algebraic.

\bibliographystyle{plain}
\bibliography{RefsUpdated.bib}

\bigskip
{\small\noindent {\bf Authors' addresses}
\medskip

\noindent{
Departments of Computer Science and Mathematics, KU Leuven}\hfill {\tt  fatemeh.mohammadi@kuleuven.be } 

\medskip
\noindent{
Department of Mathematics, KU Leuven}\hfill {\tt  sebastian.seemann@kuleuven.be }

\end{document}